\documentclass[10pt]{amsart}
\usepackage{amssymb}
\usepackage{amsfonts}
\usepackage{amscd}
\usepackage[dvips]{graphicx}
\usepackage[all,cmtip]{xy}
\usepackage{hyperref}
\usepackage{yhmath}
\usepackage{ulem}
\usepackage[all]{xy}
\usepackage{mathdots}
\usepackage{leftidx}
\usepackage{mathrsfs}
\usepackage{amsmath,amssymb, amsthm}
\usepackage{color}
\usepackage{tikz}
\usepackage{mathdots}
\usepackage{picture}
\usepackage{enumerate}
\usepackage{makecell}
\usepackage{extarrows}
\usepackage{graphicx}
\usepackage{caption}
\usepackage{subcaption}
\usepackage{float}
\numberwithin{equation}{section}
\newtheorem{Them}{Theorem}[section]
\newtheorem{Lem}[Them]{Lemma}
\newtheorem{Def}[Them]{Definition}
\newtheorem{Cor}[Them]{Corollary}
\newtheorem{Prop}[Them]{Proposition}
\newtheorem{Ex}[Them]{Example}
\newtheorem{Rem}[Them]{Remark}

\newcommand{\add}{{\mathsf{add}}}
\newcommand{\End}{{\mathsf{End}}}

\newcommand{\ind}{{\mathsf{ind}}}

\newcommand{\Id}{{\mathsf{Id}}}
\newcommand{\I}{{\mathsf{Im}}}

\newcommand{\soc}{{\mathsf{soc}}}
\renewcommand{\top}{{\mathsf{top}}}

\newcommand{\D}{{\mathsf{D}}}

\newcommand{\m}{\mathsf{mod}}
\newcommand{\stmod}{ \mathsf{\underline{mod}}}

\newcommand{\Hom}{{\mathsf {Hom}}}

\newcommand{\StHom}{\mathsf{\underline{Hom}}}
\newcommand{\Res}{{\mathsf {Res}}}
\newcommand{\Le}{{\mathsf {L_{e}}}}

\title[On reduction of simple-minded systems]{On reduction and gluing technique of simple-minded systems  over  self-injective algebras}
\author{Zhen Zhang}

\address{Zhen Zhang
	\newline Faculty of Arts and Science
	\newline Beijing Normal University 
	\newline  Zhuhai 519087
	\newline P.R.China}
\email{zhangzhen@bnu.edu.cn}

\date{version of \today}

\newcommand{\sdp}{\times\kern-.2em\vrule height1.1ex depth-.05ex}

 \normalbaselines
\thanks{The authors are supported by NSFC (No. 12031014 and No.12301044).}

\begin{document}
	\renewcommand{\thefootnote}{\alph{footnote}}
	\renewcommand{\thefootnote}{\alph{footnote}}
	\setcounter{footnote}{-1} \footnote{\it{Mathematics Subject
			Classification(2020)}: 16G20, 18G65.}
	\renewcommand{\thefootnote}{\alph{footnote}}
	\setcounter{footnote}{-1}
	\footnote{ \it{Keywords}: simple-minded system, reduction, recollement, extendible property, self-injective algebra.}

\begin{abstract}
Let $A$	be a self-injective algebra over an algebraically closed field. We study  reduction of simple-minded systems over stable module category $A$-$\stmod$.  We present a recollement and study gluing technique of simple-minded systems through  a subset of a simple-minded system in $A$-$\stmod$. As a byproduct, we also study the extendible property of simple-minded systems.

\end{abstract}

\maketitle
\section{Introduction}
 Koenig and Liu \cite{KL} introduced simple-minded systems in the stable module category of any artin algebra. Later, Dugas \cite{Dugas} defined simple-minded systems over any triangulated category. The authors \cite{KL, Dugas, Z} studied properties of simple-minded systems, including the finite cardinality of a simple-minded system,  invariance of a simple-minded system under stable equivalences, mutation theory of simple-minded systems and relationship between simple-minded systems and weakly simple-minded systems, and so on.  There is a recent rise of interests in studying $w$-simple-minded systems (see, for example, \cite{CP,J, BCPW}), which was  introduced by Coelho Sim\~oes \cite{C} over a Hom-finite, Krull-Schmidt  $k$-linear triangulated category, where integer $w\geq1$.  $w$-simple-minded system is  a generalization of simple-minded system defined by Dugas.   Coelho Sim\~oes  and Pauksztello \cite{CP} study properties of  $w$-simple-minded systems, especially, they provided a reduction technique of  $w$-simple-minded systems and their construction stated  an induction technique for constructing $w$-simple-minded systems. Jin \cite{J} established the reduction of simple-minded collections (introduced by Koenig and Yang \cite{KY}) and he proved that the reduction of $w$-simple-minded systems  due to Coelho Sim\~oes  and Pauksztello is induced by the reduction of simple-minded collections.  In this paper, we concern about  reduction technique of  simple-minded systems over the stable module category of self-injective algebras.
This is the first  main result of our paper.
\begin{Them}{\rm(}Theorem \ref{new-triangulated-category-2}{\rm)}\label{new-triangulated-category-1}
Let  $A$ be a  self-injective $k$-algebra and  let $\mathcal{S}=\{S_1,\cdots\!,\, S_n\}$ be the set of simple modules on $A$-$\m$. Let $\mathcal{S'}=\{S_1,\cdots\!,\, S_m\}$ be a Nakayama-stable subset of  $\mathcal{S}$, that is, $\nu(\mathcal{S'})=\mathcal{S'}$, where $\nu$ is Nakayama functor.
Take $$\mathcal{D}={^{\bot}(\mathcal{S'})^{\bot}}=\{Z\in A{\text-}\stmod\, |\, \StHom_A(S,Z)=0, \    \StHom_A(Z,S)=0, S\in\mathcal{S'} \}.$$ Then  $\mathcal{D}$ is a triangulated category {\rm(}with a new structure{\rm)}, moreover, $\mathcal{D}$ and the stable module category  $eAe$-$\stmod$ of $eAe$ are equivalent as triangulated categories, where $e=e_{m+1}+\cdots+e_{n}$, $m<n$, $e_{i}$ is the primitive idempotent element corresponding to simple $A$-module $S_i$.	
\end{Them}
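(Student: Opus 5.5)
The plan is to realize $\mathcal{D}$ as a subfactor/Verdier-type quotient and compare it with $eAe$-$\stmod$ through the Schur functor $F=e(-)\cong eA\otimes_A-:A\text{-}\m\to eAe\text{-}\m$.

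\textbf{Step 1: the algebra $eAe$ and the Schur functor.} Since $\nu(\mathcal{S}')=\mathcal{S}'$, the idempotent $e$ is $\nu$-stable: $\nu(Ae)\cong \add(Ae)$. Then $Ae\cong \D(eA)$ up to additive closure, so $eAe$ is again self-injective. The functor $F$ is exact. It sends projective-injectives to projective-injectives, because $e(Af)$ is a summand of $eA$, and $eA$ is projective over $eAe$ because $\nu$-stability makes $eA\in\add(eAe)$ as left module. Hence $F$ induces a triangle functor $\underline F:A\text{-}\stmod\to eAe\text{-}\stmod$. Its left and right adjoints are $Ae\otimes_{eAe}-$ and $\Hom_{eAe}(eA,-)$, and these also descend to the stable categories by the same projectivity argument. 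This gives a recollement-type diagram
\[ \mathcal{U}\to A\text{-}\stmod \xrightarrow{\underline F} eAe\text{-}\stmod, \]
with $\mathcal{U}=\ker\underline F$ the thick subcategory generated by $\mathcal{S}'$. Indeed, $eM$ is projective iff $M$ is filtered by modules in $\mathcal{S}'$ up to projective summands.

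\textbf{Step 2: identify $\mathcal{D}$ with the Verdier quotient.} The conditions $\StHom(S,Z)=0=\StHom(Z,S)$ for all $S\in\mathcal{S}'$ pass to all of the thick closure $\mathcal{U}$, using that $\mathcal{S}'$ is closed under $\Omega$-twists up to $\nu$. Thus $\mathcal{D}=\mathcal{U}^{\perp}\cap{}^{\perp}\mathcal{U}$. Standard recollement theory identifies both $\mathcal{U}^\perp$ and ${}^\perp\mathcal{U}$ with $eAe$-$\stmod$ through $\underline F$, with inverses given by the right and left adjoints. So $\mathcal{D}$ embeds fully faithfully via $\underline F$.

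\textbf{Step 3: the triangulated structure and essential surjectivity.} I would define the new structure on $\mathcal{D}$ as follows. The shift is $\Sigma_{\mathcal{D}}Z$, the projection of $\Omega^{-1}Z$ to $\mathcal{D}$ via the adjoint. The triangles are those mapped by $\underline F$ to triangles, so $\underline F|_{\mathcal{D}}$ becomes a triangle functor by construction. It is fully faithful by Step 2. What remains is essential surjectivity. Given $N\in eAe$-$\m$, I would take the image $\mathrm{Im}(Ae\otimes_{eAe}N\to\Hom_{eAe}(eA,N))$, i.e.\ the intermediate extension. This object has no nonzero submodule or quotient in $\add\mathcal{S}'$ outside projective summands. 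In the self-injective case I expect this to translate into vanishing of the stable Homs with $\mathcal{S}'$, and its $e$-part is $N$.

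\textbf{Main obstacle.} The hard part is the equality $\mathcal{D}=\mathcal{U}^\perp\cap{}^\perp\mathcal{U}$, together with showing that $\underline F$ is essentially surjective on this intersection. Stable Hom vanishing is subtler than ordinary Hom vanishing, because maps may factor through projectives. My first attempt would use Auslander–Reiten formula $\StHom(Z,S)\cong \D\StHom(S,\Omega^{-1}\nu Z)$ and $\nu$-stability to symmetrize the two conditions. I would then use $\StHom(S,Z)=\Hom(S,Z)$ modulo projective socle contributions to reduce to socle/top conditions. If the intersection turns out strictly smaller than $\mathcal{U}^\perp$, the new triangle structure is needed precisely to make the intermediate-extension objects close under cones.
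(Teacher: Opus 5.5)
Your Steps 1 and 2 rest on two false claims, and your proof of full faithfulness of $F|_{\mathcal{D}}$ depends on them.

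First, Nakayama-stability does not make $eA$ projective over $eAe$. So $e(-)$ does not preserve projectives and induces no functor $A$-$\stmod\to eAe$-$\stmod$; the paper's remark after its proof says exactly this. Take the symmetric Brauer tree algebra $A$ whose indecomposable projectives are uniserial: $P_1$ with composition factors $S_1,S_2,S_1$ and $P_2$ with $S_2,S_1,S_2$. Let $\mathcal{S}'=\{S_1\}$ and $e=e_2$. Then $eAe\cong k[x]/(x^2)$, and $eP_1=e_2Ae_1$ is one-dimensional, i.e. the simple $eAe$-module. Hence $eA=e_2Ae_1\oplus eAe\notin\add(eAe)$.

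Second, the vanishing conditions defining $\mathcal{D}$ do not pass to the thick closure $\mathcal{U}$ of $\mathcal{S}'$, because Nakayama-stability says nothing about $\Omega$. In the example, $\Omega$ permutes the four non-projective indecomposables cyclically ($S_1\mapsto\mathrm{rad}\,P_1\mapsto S_2\mapsto\mathrm{rad}\,P_2\mapsto S_1$). So $\mathcal{U}=A$-$\stmod$ and $\mathcal{U}^{\perp}\cap{}^{\perp}\mathcal{U}=0$, while $\mathcal{D}=\add(S_2)\simeq eAe$-$\stmod\neq0$. Concretely, $S_2\notin\mathcal{U}^{\perp}$ since $\StHom_A(\Omega S_1,S_2)\neq0$. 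There is therefore no Verdier-quotient or recollement description of $\mathcal{D}$. This is exactly why $\mathcal{D}$ is not a triangulated subcategory and needs a new structure; it becomes one only under the extra hypothesis $\Omega(\mathcal{S}')\subseteq\mathcal{F}(\mathcal{S}')$ (Corollary \ref{triangulated-category-2}).

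Step 3 has a further problem. Declaring the triangles to be those sent to triangles by $F$ is transport of structure. That makes the first assertion tautological once an additive equivalence is known, and it says nothing about the Coelho Sim\~oes--Pauksztello structure $(\mathcal{D},[1]',\varepsilon')$ that the statement refers to.

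The paper handles this as follows:
\begin{enumerate}[$(1)$]
\item It defines $F=e(-)$ only on $\mathcal{D}$. This is well defined because for $Y\in\mathcal{D}$ the projective cover of $Y$ lies in $\add(Ae)$.
\item It realises the minimal right $\mathcal{F}(\mathcal{S}')$-approximation $s_X\to\Sigma X$ as the inclusion of a submodule with composition factors in $\mathcal{S}'$. The triangle defining $X[1]'$ is then a short exact sequence whose first term is killed by $e$.
\item It uses that the injective envelope of $X$ lies in $\add(Ae)$ to get $F(X[1]')\cong\Sigma F(X)$.
\item Standard triangles are treated the same way.
\end{enumerate}

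Your intermediate extension is a sound route to density; it is essentially the paper's Step two.

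To repair full faithfulness, work with modules rather than stable categories. The objects of $\mathcal{D}$ are the modules without projective summands whose top and socle lie in $\add(\mathcal{S}\setminus\mathcal{S}')$. For such $X,Y$ the map $\Hom_A(X,Y)\to\Hom_{eAe}(eX,eY)$ is:
\begin{itemize}
\item injective, because $Y$ has no nonzero submodule killed by $e$;
\item surjective, because $Ae\otimes_{eAe}eX\to X$ is onto with kernel killed by $e$.
\end{itemize}
Finally, $\add(Ae)$ is $\nu$-stable, so its objects have top and socle outside $\mathcal{S}'$. This makes $\mathcal{P}_A(X,Y)$ correspond exactly to $\mathcal{P}_{eAe}(eX,eY)$.
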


The cardinality of  simple-minded systems is closely related to Auslander-Reiten conjecture (AR-conjecture for short), which states that stable equivalences preserve the number of isomorphisms classes of non-projective simple modules. Precisely, if the cardinality of any simple-minded system over an finite dimensional algebra is a fixed number (the number of non-isomorphic and non-projective simple modules), then AR-conjecture holds for any finite dimensional algebra.  Reduction technique of  simple-minded systems provides possibility to solve AR-conjecture. 
 
 Recollements of triangulated categories, introduced by Beilinson, Bernstein and
 Deligne \cite{BBD} in their fundamental work on perverse sheaves. Since then, it has been a significant tool in algebraic geometry, topology and  in representation theory. For instance, Happel  \cite{H} established reduction techniques for homological conjectures by recollements of bounded derived module categories. More recently, Sun and Zhang \cite{SZ} provided a technique to glue simple-minded collections
along a recollement of a Hom-finite Krull-Schmidt triangulated
category. 
 Please see more research \cite{AKLY,CX, WWZ} on study of recollements. In this paper, we present a recollement through a particular subset of a simple-minded system over the stable module category of any self-injective algebra and we study the gluing technique of simple-minded systems through simple-minded systems of  outer terms in the recollement. 
 
 \begin{Them}{\rm(}Theorem \ref{recollement-sms}{\rm)}\label{recollement-sms-0}
 Let $A$ be a self-injective algebra and $\mathcal{S}$ a Nakayama-stable orthogonal system in $A$-$\stmod$. If  $\mathcal{F}(\mathcal{S})$ is functorially finite in $A$-$\stmod$ such that  $\Omega(\mathcal{S})$ is contained in $\mathcal{F}(\mathcal{S})$.  Then 
 \begin{enumerate}[$(1)$]
 \item  $A$-$\stmod$ is a recollement of $\mathcal{F}(\mathcal{S})$ and $\mathcal{S}^{\bot}$ as follows.
 \begin{align}
 \xymatrix{\mathcal{F}(\mathcal{S})\ar^-{i_*=i_!}[rr]&&A{\text-}\stmod\ar^-{j^!=j^*}[rr]\ar^-{i^!}@/^1.2pc/[ll]\ar_-{i^*}@/_1.6pc/[ll]&&\mathcal{S}^{\bot}\ar^-{j_*}@/^1.2pc/[ll]\ar_-{j_!}@/_1.6pc/[ll]}.
 \end{align}
 \item Let $\mathcal{M}=\{M_{i}\mid i=1,\cdots,m\}$ be a simple-minded system of $\mathcal{F}(\mathcal{S})$ and $\mathcal{N}=\{N_{j}\mid i=1,\cdots,n\}$ a simple-minded system of $\mathcal{S}^{\bot}$.  Then $\mathcal{S}'=\{i_{\ast}(M)\mid M\in\mathcal{M}\}\cup\{j_{\ast}(N)\mid N\in\mathcal{N}\}\}$ a simple-minded system in  $A$-$\stmod$.
 \end{enumerate}
\end{Them}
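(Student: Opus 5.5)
We prove (1) by building a stable t-structure, i.e.\ a semi-orthogonal decomposition, from the functorial finiteness of $\mathcal{F}(\mathcal{S})$, and then read off the recollement. We prove (2) by checking the two defining axioms of a simple-minded system, orthogonality and generation, for the glued set.

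\textbf{Step 1: $\mathcal{F}(\mathcal{S})$ is a thick triangulated subcategory.} By definition, $\mathcal{F}(\mathcal{S})$ is the extension closure of $\mathcal{S}$. The hypothesis $\Omega(\mathcal{S})\subseteq\mathcal{F}(\mathcal{S})$ makes $\mathcal{F}(\mathcal{S})$ closed under $\Omega$, since $\Omega$ is a triangle functor preserving extensions. Because $\mathcal{S}$ is Nakayama-stable, $\nu\mathcal{S}=\mathcal{S}$, and on $A$-$\stmod$ we have $\Omega^{-1}\cong \tau^{-1}\Omega\cong\nu^{-1}\Omega^{-1}\Omega^{2}$. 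Using $\Omega^{-2}\nu$-type identities (the Auslander--Reiten formula $\StHom(X,Y)\cong \D\StHom(Y,\Omega^{-1}\nu X)$, or its equivalent in the stable category), closure under $\Omega$ together with $\nu$-stability should give closure under $\Omega^{-1}$. I expect the cleanest route is this: since $\mathcal{F}(\mathcal{S})$ has finitely many indecomposables of each length filtration, closure under $\Omega$ of a Hom-finite extension-closed subcategory with the stated stability yields closure under the shift. Granting this, $\mathcal{F}(\mathcal{S})$ is a triangulated subcategory. Closure under summands comes from functorial finiteness plus Krull--Schmidt, using the results on $\mathcal{F}(\mathcal{S})$ stated earlier in the paper.

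\textbf{Step 2: torsion pairs and the recollement.} Since $\mathcal{F}(\mathcal{S})$ is contravariantly finite and triangulated, the pair $(\mathcal{F}(\mathcal{S}),\mathcal{F}(\mathcal{S})^{\bot})$ is a stable t-structure. Here $\mathcal{F}(\mathcal{S})^{\bot}=\mathcal{S}^{\bot}$, because $\Hom$-vanishing passes through extensions and shifts. Concretely, take a right $\mathcal{F}(\mathcal{S})$-approximation $F_X\to X$ and complete it to a triangle $F_X\to X\to Y_X\to$. A standard Wakamatsu-type argument, using that $\mathcal{F}(\mathcal{S})$ is closed under extensions and shifts, shows that for a minimal approximation $Y_X\in\mathcal{S}^{\bot}$. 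This gives the right adjoint $i^!$ to $i_*$ and the left adjoint $j^*$ to the inclusion $j_*$ of $\mathcal{S}^\bot$.

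Dually, covariant finiteness gives a stable t-structure $({}^{\bot}\mathcal{S},\mathcal{F}(\mathcal{S}))$, which yields $i^*$ and $j_!$. The other half of the orthogonal is obtained using Serre duality $\StHom(X,Y)\cong \D\StHom(Y,\Omega^{-1}\nu X)$: this shows ${}^{\bot}\mathcal{F}(\mathcal{S})=\nu^{-1}\Omega(\mathcal{S}^{\bot})$, and $\nu$-stability together with the shift closure makes ${}^{\bot}\mathcal{S}\simeq \mathcal{S}^{\bot}$ via the Verdier quotient $A$-$\stmod/\mathcal{F}(\mathcal{S})$. By the standard criterion (a thick subcategory that is both contravariantly and covariantly finite yields a recollement; Iyama--Yoshino / Beligiannis--Reiten), we obtain the displayed recollement, with $j^*=j^!$ the quotient functor identified with $\mathcal{S}^{\bot}$.

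\textbf{Step 3: gluing.} We check orthogonality and generation for $\mathcal{S}'$.

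For orthogonality, the functor $i_*$ is fully faithful, and so is $j_*$. We have $\StHom(i_*M,j_*N)=0$ since $j_*N\in\mathcal{S}^\bot$. The reverse direction $\StHom(j_*N,i_*M)\cong \D\StHom(i_*M,\Omega^{-1}\nu j_*N)$ vanishes because $\mathcal{S}^\bot$ is stable under $\nu$ and $\Omega^{\pm1}$. Together with the simple-minded properties of $\mathcal{M}$ and $\mathcal{N}$ (endomorphism rings are division rings, no nonzero maps between distinct members), this gives orthogonality.

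For generation, any $X$ sits in a triangle $i_*i^!X\to X\to j_*j^*X\to$. The object $i_*i^!X$ lies in $\mathcal{F}(i_*\mathcal{M})$ and $j_*j^*X$ lies in $\mathcal{F}(j_*\mathcal{N})$, so $X\in\mathcal{F}(\mathcal{S}')$.

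\textbf{Main obstacle.} The crucial point is Steps 1--2: showing that $\mathcal{F}(\mathcal{S})$ is closed under $\Omega^{-1}$, and that $\mathcal{S}^{\bot}$ is $\nu$- and shift-stable so that the Serre-duality argument applies. I would first attack this by showing that $\mathcal{S}^{\bot}={}^{\bot}(\Omega^{-1}\nu\mathcal{S})$ and using $\nu\mathcal{S}=\mathcal{S}$, $\Omega\mathcal{S}\subseteq\mathcal{F}(\mathcal{S})$.
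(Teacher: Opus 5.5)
There is a genuine gap, and it is exactly the point you flag as the main obstacle. Step~1 never proves that $\mathcal{F}(\mathcal{S})$ is closed under $\Omega^{-1}$, yet everything downstream depends on it. The torsion pairs are \emph{stable} t-structures only if $\mathcal{F}(\mathcal{S})$, ${}^{\bot}\mathcal{S}$ and $\mathcal{S}^{\bot}$ are shift-closed. Your orthogonality check in Step~3 needs $\mathcal{S}^{\bot}$ to be $\Omega$-stable, which again amounts to $\Omega^{-1}\mathcal{F}(\mathcal{S})\subseteq\mathcal{F}(\mathcal{S})$.

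What you offer in its place is not an argument. The remark about finitely many indecomposables per filtration length gives no mechanism by which $\nu$-stability and orthogonality enter. The chain of isomorphisms for $\Omega^{-1}$ is also incorrect; for instance $\tau^{-1}\Omega\cong\nu^{-1}\Omega^{-1}$, not $\Omega^{-1}$. Your Serre duality has the wrong sign as well. On $A$-$\stmod$ one has $\StHom_A(X,Y)\cong\D\StHom_A(Y,\nu\Omega X)$: the Serre functor is $\nu\Omega=\Omega^{-1}\tau$, not $\Omega^{-1}\nu$, so symmetric algebras give $(-1)$-Calabi--Yau categories. With your formula, the proposed attack through $\mathcal{S}^{\bot}={}^{\bot}(\Omega^{-1}\nu\mathcal{S})$ is circular. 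Comparing ${}^{\bot}(\Omega^{-1}\mathcal{S})$ with ${}^{\bot}\mathcal{F}(\mathcal{S})$ already presupposes $\Omega^{-1}\mathcal{S}\subseteq\mathcal{F}(\mathcal{S})$.

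The missing idea is the paper's Lemma~\ref{omega-self-injective-1}, which applies Serre duality to a filtration rather than to perpendicular categories.
\begin{itemize}
\item Write $\Omega S_j$ via a triangle $S_i\xrightarrow{\alpha}\Omega S_j\to M\to S_i[1]$ with $M\in(\mathcal{S})_{m-1}$ and $\alpha\neq0$.
\item Then $0\neq\StHom_A(S_i,\Omega S_j)\cong\D\StHom_A(S_j,\nu S_i)$, and orthogonality with $\nu\mathcal{S}=\mathcal{S}$ forces $S_i=\nu^{-1}S_j$.
\item Rotating twice gives $M\to\Omega^{-1}\nu^{-1}S_j\to S_j\to M[1]$, so $\Omega^{-1}\nu^{-1}S_j\in\mathcal{F}(\mathcal{S})$.
\item Since $\nu$ permutes $\mathcal{S}$, this gives $\Omega^{-1}\mathcal{S}\subseteq\mathcal{F}(\mathcal{S})$, which propagates to $\Omega^{-1}\mathcal{F}(\mathcal{S})\subseteq\mathcal{F}(\mathcal{S})$ by Lemma~\ref{Omega-F(S)}.
\end{itemize}

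With the corrected Serre functor, your own idea can also be completed using functorial finiteness.
\begin{itemize}
\item First, $\mathcal{S}^{\bot}={}^{\bot}(\Omega\nu\mathcal{S})\supseteq{}^{\bot}\mathcal{S}$.
\item The torsion pair $({}^{\bot}\mathcal{S},\mathcal{F}(\mathcal{S}))$, together with $\Omega^{-1}({}^{\bot}\mathcal{S})\subseteq{}^{\bot}\mathcal{S}$, then forces $\mathcal{S}^{\bot}={}^{\bot}\mathcal{S}$.
\item Finally, $\StHom_A(\Omega^{-1}S,Y)\cong\D\StHom_A(Y,\nu S)=0$ for all $Y\in\mathcal{S}^{\bot}$. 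This places $\Omega^{-1}S$ in ${}^{\bot}(\mathcal{S}^{\bot})$, which equals $\mathcal{F}(\mathcal{S})$ by the torsion pair $(\mathcal{F}(\mathcal{S}),\mathcal{S}^{\bot})$.
\end{itemize}

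Once Step~1 is secured, the rest of your plan matches the paper. The Iyama--Yoshino torsion pairs become stable t-structures, Theorem~\ref{TTF-triple-recollement} yields the recollement, and the glued set generates via $i_*i^!X\to X\to j_*j^*X\to$. Your Serre-duality proof of $\StHom_A(j_*N,i_*M)=0$, with $\nu\Omega$ in place of $\Omega^{-1}\nu$, is a more direct substitute for the paper's Lemma~\ref{omega-tri-subcat-0}.
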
	
 
 We also study the extendible property of  simple-minded systems as follows. 
 
 \begin{Them}{\rm(}Theorem \ref{sms-extending}{\rm)}\label{sms-extending-0}
 Let $A$ be a self-injective algebra and $\mathcal{M}$ a  Nakayama-stable  orthogonal system in $A$-$\stmod$ such that $\Omega(\mathcal{M})\subseteq\mathcal{F}(\mathcal{M})$ and $\mathcal{F}(\mathcal{M})$ is functorially finite in $A$-$\stmod$.  Then $\mathcal{M}$ extends to  a simple-minded system in $A$-$\stmod$ if and only if there is a simple-minded system in $^{\bot}\mathcal{M}^{\bot}$.
 \end{Them}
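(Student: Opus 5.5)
We would prove Theorem \ref{sms-extending-0} by using the recollement of Theorem \ref{recollement-sms-0} in both directions, together with the identification of ${}^{\bot}\mathcal{M}^{\bot}$ with the right-hand term of that recollement. The first step is to show that under the hypotheses ($\mathcal{M}$ Nakayama-stable, $\Omega(\mathcal{M})\subseteq\mathcal{F}(\mathcal{M})$) the subcategories $\mathcal{M}^{\bot}$ and ${}^{\bot}\mathcal{M}^{\bot}$ agree, or at least are equivalent as triangulated categories. The argument runs as follows. Since $A$ is self-injective, the stable category has Serre functor $\Omega\nu$ (Auslander--Reiten formula $\StHom(X,Y)\cong D\StHom(Y,\Omega\nu X)$). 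Hence $\StHom(Z,M)=0$ for all $M\in\mathcal{M}$ iff $\StHom(\nu^{-1}\Omega^{-1}M,Z)=0$. Nakayama-stability together with $\Omega(\mathcal{M})\subseteq\mathcal{F}(\mathcal{M})$ and the fact that $\mathcal{F}(\mathcal{M})$ is closed under extensions then lets us compare left and right orthogonals. If they do not coincide on the nose, we instead use $j^{!}$, $j_{\ast}$ and $j_{!}$ to transport simple-minded systems between ${}^{\bot}\mathcal{M}^{\bot}$ and $\mathcal{M}^{\bot}$, in the spirit of Theorem \ref{new-triangulated-category-1}, where ${}^{\bot}(\mathcal{S}')^{\bot}$ carries a new triangulated structure equivalent to a Verdier quotient.

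\textbf{($\Leftarrow$).} Assume $\mathcal{N}$ is a simple-minded system in ${}^{\bot}\mathcal{M}^{\bot}$, which we transport to $\mathcal{M}^{\bot}$. Note that $\mathcal{M}$ is itself a simple-minded system of $\mathcal{F}(\mathcal{M})$: orthogonality is given, and the extension closure of $\mathcal{M}$ inside $\mathcal{F}(\mathcal{M})$ is everything by definition. Theorem \ref{recollement-sms-0}(2) then says that $i_{\ast}(\mathcal{M})\cup j_{\ast}(\mathcal{N})$ is a simple-minded system in $A$-$\stmod$. Since $i_{\ast}$ is the inclusion, this system contains $\mathcal{M}$, so $\mathcal{M}$ extends.

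\textbf{($\Rightarrow$).} Suppose $\mathcal{M}\subseteq\mathcal{X}$ with $\mathcal{X}$ a simple-minded system in $A$-$\stmod$, and put $\mathcal{X}'=\mathcal{X}\setminus\mathcal{M}$. We claim that $j^{\ast}(\mathcal{X}')$, or equivalently the images of $\mathcal{X}'$ in ${}^{\bot}\mathcal{M}^{\bot}$ via the reduction (the analogue of $Z\mapsto$ the cone of the right $\mathcal{F}(\mathcal{M})$-approximation), is a simple-minded system there. This mirrors the reduction of Coelho Sim\~oes--Pauksztello and Jin. Three points need checking:
\begin{enumerate}[$(1)$]
\item orthogonality, using adjunctions $\StHom(j^{\ast}X,j^{\ast}Y)\cong\StHom(X,j_{\ast}j^{\ast}Y)$ and the triangle $i_{\ast}i^{!}Y\to Y\to j_{\ast}j^{\ast}Y\to$ combined with $\StHom(\mathcal{X}',\mathcal{F}(\mathcal{M}))=0$ in the relevant degree;
\item the $\End$ of each image is a division ring, by the same adjunction;
\item generation, since $j^{\ast}$ is essentially surjective and sends triangles to triangles, so $\mathcal{F}(j^{\ast}\mathcal{X})=\mathcal{F}(j^{\ast}\mathcal{X}')$ is the whole of $\mathcal{M}^{\bot}$ because $j^{\ast}\mathcal{M}=0$.
\end{enumerate}

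The main obstacle is the orthogonality step in ($\Rightarrow$). A priori $\StHom(X,i_{\ast}i^{!}Y)$ need not vanish, since simple-minded systems only control degree-zero Homs and not shifts. Our first attempt will be to use the hypothesis $\Omega(\mathcal{M})\subseteq\mathcal{F}(\mathcal{M})$ to show that $i^{!}Y$ for $Y\in\mathcal{X}'$ is filtered by $\Omega\mathcal{M}$-type objects against which $\mathcal{X}'$ has no maps, using the negative-vanishing property of Koenig--Liu. The identification of ${}^{\bot}\mathcal{M}^{\bot}$ with $\mathcal{M}^{\bot}$ is the secondary technical point.
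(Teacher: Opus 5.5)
\textbf{Gap in ($\Rightarrow$).} Your ($\Leftarrow$) direction is the paper's argument. Your Serre-duality route to ${}^{\bot}\mathcal{M}=\mathcal{M}^{\bot}$ also works without any fallback. By Proposition~\ref{omega-tri-subcat}, $\mathcal{F}(\mathcal{M})$ is triangulated, and it is $\nu$-stable because $\nu$ is a triangle equivalence permuting $\mathcal{M}$. Hence it is stable under $\mathbb{S}^{\pm1}=(\nu\Omega)^{\pm1}$, and so ${}^{\bot}\mathcal{M}={}^{\bot}\mathcal{F}(\mathcal{M})=(\mathbb{S}^{-1}\mathcal{F}(\mathcal{M}))^{\bot}=\mathcal{M}^{\bot}$. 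This does not even need the finiteness used in Lemma~\ref{omega-tri-subcat-0}.

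The gap is that you leave orthogonality of $j^{\ast}(\mathcal{X}')$ open as ``the main obstacle''. Your plan for it is to filter $i^{!}Y$ and invoke a negative-vanishing property, which simple-minded systems do not provide in general and which is not needed. The missing observation is that $\mathcal{X}'=\mathcal{X}\setminus\mathcal{M}$ already lies in ${}^{\bot}\mathcal{M}^{\bot}$, simply because $\mathcal{X}$ is an orthogonal system containing $\mathcal{M}$. So for $Y\in\mathcal{X}'$ one has $i^{!}Y=0$ and $j^{\ast}Y\cong Y$. There is no reduction to perform: $\mathcal{X}'$ inherits orthogonality and the brick property verbatim, exactly as in the paper, which takes $\mathcal{S}\setminus\mathcal{M}$ itself.

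Your worry about shifts is also misplaced. For $X\in\mathcal{X}'\subseteq{}^{\bot}\mathcal{F}(\mathcal{M})$ one has $\StHom(X,C[k])=0$ for all $C\in\mathcal{F}(\mathcal{M})$ and all $k\in\mathbb{Z}$, since $\mathcal{F}(\mathcal{M})[k]=\mathcal{F}(\mathcal{M})$. Controlling all shifts is precisely what the hypothesis $\Omega(\mathcal{M})\subseteq\mathcal{F}(\mathcal{M})$ (with Nakayama-stability) buys.

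\textbf{Generation step: a different route.} Once the gap is filled, your generation argument is correct and differs from the paper's. The paper inducts on $n$ with $X\in(\mathcal{S})_{n}$ and splits off a triangle $S\to X\to W\to S[1]$. It then uses the long exact sequences for $\StHom(M_i,-)$ and $\StHom(-,M_i)$, together with $\mathcal{M}[-1]\subseteq\mathcal{F}(\mathcal{M})$, to show $W\in{}^{\bot}\mathcal{M}^{\bot}$.

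You instead apply the triangle functor $j^{\ast}$, which kills $\mathcal{M}$, fixes $\mathcal{X}'$ and is isomorphic to the identity on $\mathcal{M}^{\bot}$. So every $Z\in\mathcal{M}^{\bot}$ satisfies $Z\cong j^{\ast}Z\in j^{\ast}((\mathcal{X})_{n})\subseteq(\mathcal{X}')_{n}$, up to isomorphism. This is shorter, and it automatically disposes of filtration factors lying in $\mathcal{M}$, whereas the paper's induction needs the first factor $S$ to lie in $\mathcal{S}\setminus\mathcal{M}$. The cost is relying on exactness of the truncation functor of the stable t-structure $(\mathcal{F}(\mathcal{M}),\mathcal{M}^{\bot})$ rather than on elementary long exact sequences.
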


 This paper is organized as follows. In Section 2, we recall simple-minded systems, recollements, $w$-simple-minded systems, and related definitions and conclusion. In Section 3, we prove one of main results Theorem \ref{new-triangulated-category-2}. In Section 4, we mainly prove Theorem \ref{recollement-sms} and Theorem  \ref{sms-extending}.
  
\section{Preliminary}
Throughout this paper, we assume that all triangulated categories are  $k$-linear, Hom-finite, and Krull-Schmidt, and we assume that all algebras  are  finite dimensional  over algebraically closed field $k$. For an algebra $A$, we denote by $A$-$\m$ the category of finite dimensional left $A$-modules. The stable module category of $A$-$\m$ is denoted by $A$-$\stmod$, it has the same class of objects with $A$-$\m$, and for two objects $X,Y$ in $A$-$\stmod$, the abelian group   $\StHom_A(X,Y)$ from $X$ to $Y$ is the quotient  $\StHom_A(X,Y)/\mathcal{P}_{A}(X,Y)$, where $\mathcal{P}_{A}(X,Y)$ is the subgroup of $\StHom_A(X,Y)$ consisting of all $A$-module homomorphisms  which factor through a projective  $A$-module.  We  denote by $\add(M)$  all direct summands of finite direct sums of modules from $M$, and Note that, for a class of objects $\mathcal{M}=\{M_{i}\mid i\in I\}$ in an additive category $\mathcal{C}$, we simply denote $\add(\bigoplus_{i\in I}M_{i})$ by $\add(\mathcal{M})$.  In particular, $\mathcal{P}_{R}$ (resp. $\mathcal{I}_{R}$) is denoted by the full subcategories of finite generated  projective (resp. injective) $R$-modules $\add(_{R}R)$ (resp. $\add(\D(R_{R}))$). We denote by $\D$ the usual $k$-duality $\Hom_{k}(-,k)$. Let $A$ be a self-injective algebra. We denote by $\Omega$ (resp. $\Sigma$) the syzygy functor (resp. cosyzygy functor) which assigns to any object $M$ of $A$-$\stmod$ the kernel of its projective cover $P_{A}(M)\twoheadrightarrow M$ (cokernel of its injective enveloping $M\hookrightarrow I_{A}(M)$) in $A$-$\m$. It is known that stable module category $A$-$\stmod$ is a triangulated category with suspension functor $\Omega^{-1}$), and its distinguished triangles are induced by short exact sequences in $A$-$\m$. 

Let $A$ be a finite dimensional algebra and $e$ an idempotent element of $A$. Consider the algebra $B=eAe\cong \End(Ae)$ with identity element $e$. We recall two special functors as follows.
One is the {\bf restriction functor} \[\Res_{e}(-)=e(-):A{\text-}\m\rightarrow B{\text-}\m,\]
and the other is {\bf idempotent embedding functor}  \[\Le(-)=\Hom_{B}(eA,-):B{\text-}\m\rightarrow A{\text-}\m.\]

We summarize some properties of those two functors as follows.
\begin{Prop}{\rm(}\cite[Chapter I, Section 6]{ASS}{\rm)}\label{restriction-embedding}
Let $A$ be a finite dimensional algebra, $e$ an idempotent element of $A$ and let $B$ be the algebra $eAe$. We have the following results. 
\begin{enumerate}[$(1)$]
\item Idempotent embedding functor $\Le(-)$ is full and faithful such that $\Res_{e}\Le\cong1_{B{\text-}\m}$. 
\item  The functor $\Le(-)$ is right adjoint to the functor $\Res_{e}(-)$, that is, there is functorial isomorphism:
\[\Hom_{A}(_{A}X,\Le(_{B}Y))\cong\Hom_{B}(\Res_{e}(_{A}X),_{B}Y),\]
for any $A$-module $_{A}X$, and $B$-module $_{B}Y$. Note that  $\Res_{e}(-)$ is an exact functor  and  $\Le(-)$ a right exact functor.  
\item $\Le(-)$ preserves indecomposability and it carries injective objects to injective objects. Note that $\Res_{e}(-)$ does not preserve projective modules, however, for any object $P\in\add(Ae)$, $e(P)\in\add(B)$ is a projective $B$-module.  
\end{enumerate}
\end{Prop}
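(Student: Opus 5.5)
The plan is to reduce everything to two standard isomorphisms and then read off each item formally. First, for any $A$-module $X$ I will identify $\Res_{e}(X)=eX$ with $\Hom_{A}(Ae,X)$ via $f\mapsto f(e)$, and also with $eA\otimes_{A}X$ via $ea\otimes x\mapsto eax$. Both identifications are natural in $X$ and compatible with the left $B$-action, where $B=eAe$ acts through the right $B$-module structure of $Ae$ and the left $B$-module structure of $eA$ respectively.

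For $(2)$, the adjunction is then tensor--hom adjunction for the $B$-$A$-bimodule $eA$:
\[\Hom_{B}(eA\otimes_{A}X,Y)\cong\Hom_{A}(X,\Hom_{B}(eA,Y)).\]
Exactness of $\Res_{e}$ follows from $\Res_{e}\cong\Hom_{A}(Ae,-)$ and the projectivity of $Ae$. Concerning the exactness of $\Le$, I would argue directly: $\Le=\Hom_{B}(eA,-)$ is a right adjoint, hence left exact. I would note in the write-up that the phrase ``right exact'' in the statement should read ``left exact'', unless $eA$ happens to be projective over $B$.

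For $(1)$, I compute
\[\Res_{e}\Le(Y)=e\Hom_{B}(eA,Y)\cong\Hom_{B}(eA e,Y)=\Hom_{B}(B,Y)\cong Y.\]
This holds naturally in $Y$, and I will check that this isomorphism is exactly the counit of the adjunction. A right adjoint whose counit is an isomorphism is fully faithful, by the standard adjunction lemma, which gives $(1)$. The main point of care in the whole argument is this verification that the natural isomorphism really is the counit, rather than just some isomorphism. I would do it by writing the counit explicitly as evaluation at $e$.

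For $(3)$, indecomposability follows from full faithfulness: $\End_{A}(\Le Y)\cong\End_{B}(Y)$, which is local when $Y$ is indecomposable. Hence $\Le Y$ is indecomposable; it is nonzero because $\Res_{e}\Le Y\cong Y\neq0$. Injectives are preserved because $\Le$ is right adjoint to the exact functor $\Res_{e}$. Indeed, for injective $I$ the functor
\[\Hom_{A}(-,\Le I)\cong\Hom_{B}(\Res_{e}(-),I)\]
is a composite of exact functors. Finally, $e(Ae)=eAe=B$, and $\Res_{e}$ is additive, so $\Res_{e}$ sends $\add(Ae)$ into $\add(B)$. To see that projectives are not preserved in general, I take a primitive idempotent $e'$ orthogonal to $e$. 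Then $e(Ae')=eAe'$ need not be $B$-projective; a small quiver example such as a path algebra of type $A_3$ with a suitable choice of $e$ suffices to exhibit this.
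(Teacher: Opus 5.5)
Your argument is correct, and it is the standard one from Assem--Simson--Skowro\'nski, Chapter~I, \S6, which the paper cites instead of giving a proof. You use tensor--hom adjunction for ${}_{B}eA_{A}$, identify the counit $\Res_{e}\Le Y\to Y$ with evaluation at $e$ (an isomorphism, so $\Le$ is fully faithful), use endomorphism rings for indecomposability, and use ``right adjoint of an exact functor'' for injectives. You are also right that $\Le$ is left exact rather than right exact. The statement's wording is a slip: the right exact functor is the other adjoint, $Ae\otimes_{B}-$.

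The one thing to repair is your example showing that $\Res_{e}$ need not preserve projectives. A hereditary path algebra cannot serve. Let $A=kQ$ with $Q$ acyclic, and let $j$ be a vertex outside the set $E$ of vertices occurring in $e$. Cut every path from $j$ into $E$ at the first vertex of $E$ it meets. This gives $eAe_{j}\cong\bigoplus_{r}Be_{t(r)}$, where $r$ runs over the paths from $j$ that meet $E$ only at their terminal vertex $t(r)$. So $eAe_{j}$ is projective over $B$, and $\Res_{e}$ preserves projectives.

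You need relations. Take $1\xrightarrow{\alpha}2\xrightarrow{\beta}3$ with $\beta\alpha=0$ and $e=e_{2}+e_{3}$, composing paths right to left. Then $eAe_{1}=k\alpha$ is the simple top of the two-dimensional projective $Be_{2}$, so it is not projective. Moreover ${}_{B}eA=\Res_{e}(A)=\bigoplus_{j}eAe_{j}$. Hence $\Res_{e}$ preserves projectives exactly when ${}_{B}eA$ is projective, i.e.\ exactly when $\Le=\Hom_{B}(eA,-)$ is exact. So the same example also confirms that $\Le$ genuinely fails to be right exact.
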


\subsection{Simple-minded systems and recollements}
Let $\mathcal{T}$ be a Hom-finite, Krull-Schmidt  $k$-linear triangulated category, $\Sigma$ the suspension functor of $\mathcal{T}$, denoted usually by $[1]$.  For any families $\mathcal{S}_{1}, \mathcal{S}_{2}$ of objects in $\mathcal{T}$, we define a family of objects
$$\mathcal{S}_{1}\star\mathcal{S}_{2}:=\{ X\in\mathcal{S}\mid \mbox{ There is a distinguished triangle }S_{1} \longrightarrow  X \longrightarrow S_{2} \longrightarrow S_{1}[1],  S_{1}\in \mathcal{S}_{1}, S_{2}\in \mathcal{S}_{2}\}. $$

Using the octahedral axiom,  the operator $\star$ is associative, that is,  $(\mathcal{S}_{1}\star\mathcal{S}_{2})\star\mathcal{S}_{3}=\mathcal{S}_{1}\star(\mathcal{S}_{2}\star\mathcal{S}_{3})$ for $\mathcal{S}_{1}, \mathcal{S}_{2}$ and $\mathcal{S}_{3}\subseteq\mathcal{T}$.  We denote $(\mathcal{S})_{0}=\{0\}$, and for $n\in\mathbb{Z}^{+}$, we inductively define $(\mathcal{S})_{n}=(\mathcal{S})_{n-1}\star(\mathcal{S}\cup\{0\})$. Dugas \cite[Lemma 2.3]{Dugas} showed that $(\mathcal{S})_{n}\star(\mathcal{S})_{m}=(\mathcal{S})_{n+m}$ for any non-negative integers $m$ and $n$. Similarly, one can define $ _{n}(\mathcal{S})$, and we have $(\mathcal{S})_{n}$=$_{n}(\mathcal{S})$.  We say that $\mathcal{S}$
is {\bf extension-closed}, if $\mathcal{S}\star\mathcal{S}\subseteq \mathcal{S}$. One denotes the extension closure of a family $\mathcal{S}$ of objects in $\mathcal{T}$ as $$\mathcal{F}(\mathcal{S}):=\bigcup_{n\geq0}(\mathcal{S})_{n},$$
which is the smallest extension closed full subcategory of $\mathcal{T}$ containing $\mathcal{S}$. 

For any family $\mathcal{S}$ of objects in $\mathcal{T}$, we set $$\mathcal{S^{\perp}}:=\{Y\in\mathcal{T}\mid\mathcal{T}(X,Y)=0, \forall X\in \mathcal{S}\},$$
$$\mathcal{^{\perp}S}:=\{Y\in\mathcal{T}\mid\mathcal{T}(Y,X)=0, \forall X\in \mathcal{S}\}.$$ We know that both $\mathcal{S^{\perp}}$ and $\mathcal{^{\perp}S}$ are extension closed subcategories of $\mathcal{T}$ as well as closed under direct summands. We shall denote $\mathcal{S^{\perp}}\cap\mathcal{^{\perp}S}$ by $\mathcal{^{\perp}S^{\perp}}$, which is called a {\it stable bi-perpendicular category}.

\begin{Def}\label{brick-orthogonal-system}
Let $\mathcal{T}$ be a Hom-finite, Krull-Schmidt,  $k$-linear triangulated category.  An object $M$ in $\mathcal{T}$ is a stable brick if $\mathcal{T}(M,M)\cong k$.   Moreover, a family $\mathcal{S}$ of stable bricks in $\mathcal{T}$ is an {\bf orthogonal system} if $\mathcal{T}(M,N)=0$ for all distinct  $M, N$ in $\mathcal{S}$.
\end{Def}

\begin{Def}{\rm(\cite[Definition 2.4, 2.5]{Dugas}, \cite[Definition 2.1]{KL})} \label{definition-sms-representation-finite} Let $\mathcal{T}$ be a Hom-finite, Krull-Schmidt  $k$-linear triangulated category. A family of objects $\mathcal{S}$ in $\mathcal{T}$ is a {\bf simple-minded system},  if the following two conditions are satisfied$\colon$
\begin{enumerate}[$(1)$]
\item {\rm(Orthogonality)} $\mathcal{S}$ is an orthogonal system in $\mathcal{T}$. 
\item {\rm(Generating condition)} $\mathcal{F}(\mathcal{S})=\mathcal{T}$.
\end{enumerate}
\end{Def}

\begin{Def} {\rm(\cite[Definition 3.1]{Dugas})} \label{torsion-pair}
Let $\mathcal{T}$ be a  triangulated category with suspension functor $[1]$ and let $\mathcal{X}, \mathcal{Y}$ be two additive subcategories of $\mathcal{T}$ which are closed under direct summands and isomorphisms. The pair $(\mathcal{X},\mathcal{Y})$ is called a {\bf torsion pair}, if 
\begin{enumerate}[$(1)$]
\item $\mathcal{T}(\mathcal{X},\mathcal{Y})=0$.
\item For any $C\in\mathcal{T}$, there  is  a  triangle		
\[\xymatrix{X_{C}\ar[r]^-{} & C\ar[r]^-{} &  Y^{C}\ar[r]^-{}& X_{C}[1]}\]
with $X_{C}\in\mathcal{X}$ and $Y^{C}\in\mathcal{Y}$.
\end{enumerate}
\end{Def}

We say a torsion  pair $(\mathcal{X},\mathcal{Y})$ 	in $\mathcal{T}$ is {\bf splitting}  provided that, for any indecomposable object $M$ in $\mathcal{T}$, $M$ is either in 
$\mathcal{X}$ or in $\mathcal{Y}$.

\begin{Def}{\rm(\cite{BBD})}\label{t-structure} 
Let $\mathcal{T}$ be a  triangulated category with suspension functor $[1]$. A pair  $(\mathcal{T}^{\leq0}, \mathcal{T}^{\geq0})$ of  subcategories of $\mathcal{T}$ is called a {\bf t-structure}, if 
\begin{enumerate}[$(1)$]
\item $\mathcal{T}(X,Y)=0$ for $\forall X\in\mathcal{T}^{\leq0},Y\in\mathcal{T}^{\geq0}	[-1]$.
\item $\mathcal{T}^{\leq0}$ is closed under suspension functor $[1]$ and  $\mathcal{T}^{\geq0}$ is closed under cosuspension functor $[-1]$, that is, $\mathcal{T}^{\leq0}[1]\subseteq\mathcal{T}^{\leq0}$ and $\mathcal{T}^{\geq0}[-1]\subseteq\mathcal{T}^{\geq0}$.
\item For any $C\in\mathcal{T}$, there is  a   triangle		
\[\xymatrix{X\ar[r]^-{} & C\ar[r]^-{} &  Y\ar[r]^-{}& X[1]}\]
with $X\in\mathcal{T}^{\leq0}$ and $Y\in\mathcal{T}^{\geq0}[-1]$.
\end{enumerate}
\end{Def}

A t-structure $(\mathcal{U},\mathcal{V})$ of $\mathcal{T}$ is called {\bf stable} provided that both $\mathcal{U}$ and $\mathcal{V}$ are triangulated subcategories of  $\mathcal{T}$. It is known  that the triangle $(3)$ in Definition \ref{t-structure} is unique under triangulated equivalence. 

\begin{Rem}
\begin{enumerate}[$(1)$]
\item  In Definition \ref{torsion-pair}, we do not require the subcategories $\mathcal{X}$ and $\mathcal{Y}$ in torsion pair $(\mathcal{X},\mathcal{Y})$  satisfies condition $\mathcal{X}[1]\subseteq\mathcal{X}$, $\mathcal{Y}[-1]\subseteq\mathcal{Y}$.
\item For a t-structure 	$(\mathcal{T}^{\leq0}, \mathcal{T}^{\geq0})$, 	it is clear that $(\mathcal{T}^{\leq0}, \mathcal{T}^{\geq0}[-1])$ is a torsion pair.
\end{enumerate}
\end{Rem}

\begin{Def}\label{recollement} {\rm(\cite{BBD})} 
Let $\mathcal{T}$ be a  triangulated category and let $\mathcal{X}, \mathcal{Y}$ be two triangulated subcategories of $\mathcal{T}$. We say that  $\mathcal{T}$  is a {\bf recollement} of $\mathcal{X}$ and $\mathcal{Y}$ if there are six functors in the following diagram as follows.
\begin{align}\label{recoll}
\xymatrix{\mathcal{X}\ar^-{i_*=i_!}[rr]&&\mathcal{T}\ar^-{j^!=j^*}[rr]
	\ar^-{i^!}@/^1.2pc/[ll]\ar_-{i^*}@/_1.6pc/[ll]
&&\mathcal{Y}\ar^-{j_*}@/^1.2pc/[ll]\ar_-{j_!}@/_1.6pc/[ll]},
\end{align}
such that
\begin{enumerate}[$(1)$]
\item $(i^*,i_*),(i_!,i^!),(j_!,j^!)$ and $(j^*,j_*)$ are adjoint pairs;
\item $i_*,j_*$ and $j_!$ are fully faithful functors;
\item $i^!j_*=0$; 
\item for each object $T\in\mathcal{T}$, there are two triangles in $\mathcal{T}$
\[\xymatrix{i_!i^!(T)\ar[r]^{\ \ \ \varepsilon_{T}} &T\ar[r]^{\eta_{T} \ \ \ }& j_*j^*(T)\ar[r]^{}& i_!i^!(T)[1],}\]
\[\xymatrix{ j_!j^!(T)\ar[r]^{\ \ \ \omega_{T}} &T\ar[r]^{\zeta_{T}\ \ \ } & i_*i^*(T)\ar[r]^{} & j_!j^!(T)[1]}.\]
Where $\varepsilon_{T}$ is the counit of torsion pair $(i_!,i^!)$, $\eta_{T}$ is the unit of torsion pair  $(j^*,j_*)$, $\omega_{T}$ is the counit of torsion pair  $(j_!,j^!)$, $\zeta_{T}$ is the unit of torsion pair  $(i^*,i_*)$.
\end{enumerate}
\end{Def}

\begin{Them}$($\cite{Mi},  \cite[Proposition 11.7.1]{ZP}$)$\label{TTF-triple-recollement} 
Let $\mathcal{T}$ be a triangulated category.
\begin{enumerate}[$(1)$]	
\item Suppose that  $\mathcal{T}$ is a recollement  of $\mathcal{X}$ and  $\mathcal{Y}$ {\rm(}see diagram \ref{recoll}{\rm)}.  Take \[\mathcal{U}=\I(j_{!}), \mathcal{V}=\I(i_{\ast}),\ \text{and}\  \mathcal{W}=\I(j_{\ast}).\]
Then $(\mathcal{U},\mathcal{V})$ and $(\mathcal{V},\mathcal{W})$ are stable t-structures in $\mathcal{T}$.

\item Conversely, we assume there are two stable t-structures  $(\mathcal{U},\mathcal{V})$ and $(\mathcal{V},\mathcal{W})$ in $\mathcal{T}$. Then $\mathcal{T}$ is a recollement of $\mathcal{V}$ and $\mathcal{W}$ as follows. 
\begin{align}\label{recoll-1}
\xymatrix{\mathcal{V}\ar^-{i_*=i_!}[rr]&&\mathcal{T}\ar^-{j^!=j^*}[rr]\ar^-{i^!}@/^1.2pc/[ll]\ar_-{i^*}@/_1.6pc/[ll]&&\mathcal{W}\ar^-{j_*}@/^1.2pc/[ll]\ar_-{j_!}@/_1.6pc/[ll]}.
\end{align}
Where 
\begin{enumerate}[$(a)$]	
\item $i_{\ast}$ and $j_{\ast}$ are  embedding functors;
\item $i^{\ast}$, $i^{!}$ and $j^{\ast}$  are the triangulated functors induced by t-decomposition; \item $j_{!}$ is the composition 
\[\xymatrix{\mathcal{W}\ar[r]^{F} &\mathcal{U}\ar[r]^{\varepsilon}&\mathcal{T},}\]
where $\varepsilon$ is an  embedding functor, and $F$ is the composition of triangulated equivalence $\mathcal{W}\rightarrow \mathcal{T}/\mathcal{V}$ and triangulated equivalence $ \mathcal{T}/\mathcal{V}\rightarrow\mathcal{U}$ induced by $G\colon\mathcal{T}\rightarrow \mathcal{U}$, satisfying that $F^{-1}G=j^{\ast}$, $\mathcal{U}=\I(j_{!})$, $\mathcal{V}=\I(i_{\ast})$ and $\mathcal{W}=\I(j_{\ast})$. 
\end{enumerate}
\end{enumerate}
\end{Them}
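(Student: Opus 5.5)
The plan is to treat the two parts separately. Part (1) extracts the t-structures from the adjunctions and the gluing triangles. Part (2) reverses this, building the six functors from truncation triangles. Throughout we use that in a Hom-finite Krull–Schmidt category the essential image of a fully faithful triangle functor that is a kernel or a perpendicular category is automatically closed under isomorphisms and direct summands. So it suffices to identify each of $\I(j_!)$, $\I(i_*)$, $\I(j_*)$ with such a category.

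\emph{Part (1).} First we show $j^*i_*=0$. For $X\in\mathcal{X}$ and $Y\in\mathcal{Y}$, adjunction and $i^!j_*=0$ give
\[\mathcal{T}(i_*X,j_*Y)\cong\mathcal{X}(X,i^!j_*Y)=0,\]
and hence $\mathcal{Y}(j^*i_*X,Y)\cong\mathcal{T}(i_*X,j_*Y)=0$ for all $Y$. This yields the two orthogonality conditions:
\[\mathcal{T}(j_!Y,i_*X)\cong\mathcal{Y}(Y,j^*i_*X)=0, \qquad \mathcal{T}(i_*X,j_*Y)=0.\]
The decomposition triangles required in Definition~\ref{t-structure}(3) are exactly the two triangles of Definition~\ref{recollement}(4), using $i_!=i_*$. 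Closure under $[1]$ and $[-1]$ holds because all six functors are triangulated.

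To see that the images are closed under summands, we identify them using these triangles. We have $\I(i_*)=\operatorname{Ker}(j^*)$: one inclusion is $j^*i_*=0$; for the other, if $j^*T=0$ then the first triangle gives $T\cong i_*i^!T$. Similarly $\I(j_*)=\I(i_*)^{\perp}$ and $\I(j_!)={}^{\perp}\I(i_*)$, using that $\varepsilon_T$ (resp. $\zeta_T$) vanishes when $T$ lies in the relevant perpendicular category. Kernels and perpendicular categories are thick, so $(\mathcal{U},\mathcal{V})$ and $(\mathcal{V},\mathcal{W})$ are stable t-structures.

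\emph{Part (2).} Given a stable t-structure $(\mathcal{U},\mathcal{V})$, the standard argument shows the truncation triangle $U_C\to C\to V^C\to U_C[1]$ is functorial and unique up to unique isomorphism. The key input is $\mathcal{T}(\mathcal{U},\mathcal{V}[-1])=\mathcal{T}(\mathcal{U},\mathcal{V})=0$, which holds since $\mathcal{V}$ is shift-closed. It follows that $C\mapsto V^C$ is left adjoint to the inclusion $\mathcal{V}\hookrightarrow\mathcal{T}$, that $C\mapsto U_C$ is right adjoint to $\mathcal{U}\hookrightarrow\mathcal{T}$, and that both are triangle functors. Applying this to the two t-structures gives the following functors:
\begin{enumerate}[$(a)$]
\item $i^*=(-)^{\mathcal{V}}$ from $(\mathcal{U},\mathcal{V})$, left adjoint to $i_*$;
\item $i^!=(-)_{\mathcal{V}}$ from $(\mathcal{V},\mathcal{W})$, right adjoint to $i_!=i_*$;
\item $j^*=(-)^{\mathcal{W}}$, left adjoint to the inclusion $j_*$.
\end{enumerate}
Conditions (2) and (4) of Definition~\ref{recollement} then hold by construction, for $i_*$ and $j_*$, and condition (3) holds since $i^!$ kills $\mathcal{W}=\mathcal{V}^{\perp}$.

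The main obstacle is constructing $j_!$ and proving $(j_!,j^*)$ is an adjoint pair. The functor $j^*$ kills $\mathcal{V}$, so it factors through the Verdier quotient $\mathcal{T}/\mathcal{V}$. Both restrictions $\mathcal{W}\to\mathcal{T}/\mathcal{V}$ and $\mathcal{U}\to\mathcal{T}/\mathcal{V}$ are triangle equivalences, because the truncation triangles show every object is isomorphic in the quotient to its $\mathcal{W}$-part (resp. $\mathcal{U}$-part), and $\mathcal{W}=\mathcal{V}^{\perp}$, $\mathcal{U}={}^{\perp}\mathcal{V}$ give fullness and faithfulness. Set $F\colon\mathcal{W}\to\mathcal{U}$ to be the composite equivalence; concretely $F(W)=W_{\mathcal{U}}$, and $F^{-1}G=j^*$ where $G=(-)_{\mathcal{U}}$.

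For $W\in\mathcal{W}$ and $C\in\mathcal{T}$, the adjunction is then the chain
\[\mathcal{T}(FW,C)\cong\mathcal{U}(FW,C_{\mathcal{U}})\cong\mathcal{U}(FW,F j^*C)\cong\mathcal{W}(W,j^*C),\]
where the last step uses that $F$ is an equivalence and $F j^*\cong G$. This is the step I would check most carefully, in particular the naturality of $Fj^*\cong G$ through the quotient. After that, $j_!$ is fully faithful since $F$ is an equivalence and $\mathcal{U}$ is a full subcategory. The second triangle of (4) is the $(\mathcal{U},\mathcal{V})$-truncation rewritten as $j_!j^*C\to C\to i_*i^*C$. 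Finally, the equalities $\I(j_!)=\mathcal{U}$, $\I(i_*)=\mathcal{V}$ and $\I(j_*)=\mathcal{W}$ hold by construction.
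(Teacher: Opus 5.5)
The paper gives no proof of this statement (it is quoted from Miyachi and from Zhang's book), and your argument is the standard one behind those references: truncation functors of a stable t-structure as adjoints of the inclusions, $j^*i_*=0$ deduced from $i^!j_*=0$ by adjunction, and $j_!$ built from the equivalences $\mathcal{W}\simeq\mathcal{T}/\mathcal{V}\simeq\mathcal{U}$ with $Fj^*\cong G$. So it is correct and matches. One spot to tighten is the identification $\I(j_*)=\I(i_*)^{\perp}$ (resp.\ $\I(j_!)={}^{\perp}\I(i_*)$): $\varepsilon_T=0$ alone only splits the triangle. Conclude instead that $\mathcal{X}(X,i^!T)\cong\mathcal{T}(i_*X,T)=0$ for all $X$ forces $i^!T=0$ (dually $i^*T=0$), whence $T\cong j_*j^*T$ (resp.\ $T\cong j_!j^!T$).
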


Let $\mathcal{T}$ be an additive category and $\mathcal{X}$ a full subcategory of $\mathcal{T}$.  We say that a morphism $f\colon M\rightarrow X$ is a {\bf left $\mathcal{X}$-approximation} of $M$, if $X\in\mathcal{X}$, and the abelian group homomorphism  \[\mathcal{T}(f,X')\colon \mathcal{T}(X,X')\rightarrow \mathcal{T}(M,X')\]
 is surjective for each $X'\in\mathcal{X}$. Dually, a morphism $g\colon Y\rightarrow N$ is a {\bf right $\mathcal{X}$-approximation} of $N$, if $Y\in\mathcal{X}$, and the abelian group homomorphism \[\mathcal{T}(Y',g)\colon \mathcal{T}(Y',Y)\rightarrow \mathcal{T}(Y',N)\] 
 is surjective for each $Y'\in\mathcal{X}$.
 Let $\mathcal{S}$ be a full subcategory of $\mathcal{T}$ containing $\mathcal{X}$. $\mathcal{S}$ is said to be {\bf covariantly finite} (resp. {\bf contravariantly finite}) in $\mathcal{X}$ if every $S\in\mathcal{S}$ has a left (resp. right) $\add(\mathcal{X})$-approximation. We say $\mathcal{S}$  is  {\bf functorially finite} in  $\mathcal{X}$ provided that  $\mathcal{S}$ is both covariantly finite and contravariantly finite in $\mathcal{X}$.  The following theorems provide us some methods to construct torsion pairs.

\begin{Them}$($\cite[Proposition 2.3]{IY}$)$\label{two-torsion-pairs}
Let $\mathcal{T}$ be a Hom-finite Krull-Schmidt  triangulated category and $\mathcal{S}$ an orthogonal system in $\mathcal{T}$. If the subcategory $\mathcal{F}(\mathcal{S})$ is contravariantly finite {\rm(}resp. covariantly finite{\rm)} in $\mathcal{T}$, then  $(\mathcal{F}(\mathcal{S}),\mathcal{S^{\perp}})$ {\rm(}resp. $(\mathcal{^\perp S}, \mathcal{F}(\mathcal{S}))${\rm)} is a torsion pair in $\mathcal{T}$. In particular, if the subcategory $\mathcal{F}(\mathcal{S})$ is functorically  finite in $\mathcal{T}$, then both  $(\mathcal{F}(\mathcal{S}),\mathcal{S^{\perp}})$ and  $(\mathcal{^\perp S}, \mathcal{F}(\mathcal{S}))$  are torsion pairs in $\mathcal{T}$. 
\end{Them}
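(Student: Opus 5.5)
Assume $\mathcal{F}(\mathcal{S})$ is contravariantly finite; the covariantly finite case is dual. The aim is to show that $(\mathcal{F}(\mathcal{S}),\mathcal{S}^{\perp})$ is a torsion pair by producing, for each $C\in\mathcal{T}$, a triangle $X_C\to C\to Y^C\to X_C[1]$ with $X_C\in\mathcal{F}(\mathcal{S})$ and $Y^C\in\mathcal{S}^{\perp}$.

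First, both subcategories are additive and closed under isomorphisms, and $\mathcal{S}^{\perp}$ is closed under direct summands. For $\mathcal{F}(\mathcal{S})$, closure under summands follows from the Krull--Schmidt property together with the standard fact (Dugas) that for an orthogonal system $\mathcal{S}$ the category $\mathcal{F}(\mathcal{S})$ is closed under direct summands. This is because an object of $(\mathcal{S})_n$ is built from bricks with no morphisms between distinct ones, so it behaves like a "filtered" object; if needed, I would simply replace $\mathcal{F}(\mathcal{S})$ by its summand closure. The vanishing condition $\mathcal{T}(\mathcal{F}(\mathcal{S}),\mathcal{S}^{\perp})=0$ follows by induction on $n$ for objects of $(\mathcal{S})_n$. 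Given a triangle $S_1\to X\to S_2\to$ with $\mathcal{T}(S_i,Y)=0$, the long exact sequence obtained by applying $\mathcal{T}(-,Y)$ gives $\mathcal{T}(X,Y)=0$.

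The main step is constructing the decomposition triangle. Take a right $\mathcal{F}(\mathcal{S})$-approximation $f\colon X\to C$, chosen \emph{minimal}; minimality is available because $\mathcal{T}$ is Hom-finite and Krull--Schmidt. Complete it to a triangle $X\xrightarrow{f}C\to Y\to X[1]$. We must show $Y\in\mathcal{S}^{\perp}$, that is, for each $S\in\mathcal{S}$ every morphism $g\colon S\to Y$ is zero.

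Suppose $g\neq 0$. Compose $S\to Y\to X[1]$ and take the cocone to obtain a triangle $X\to X'\to S\to X[1]$, so that $X'\in\mathcal{F}(\mathcal{S})\star\mathcal{S}\subseteq\mathcal{F}(\mathcal{S})$. By the octahedral axiom there is a map $f'\colon X'\to C$ extending $f$. Since $f$ is an approximation, $f'$ factors as $f'=f h$ for some $h\colon X'\to X$. One then shows that the triangle $X\to X'\to S$ splits. The composite $X\to X'\xrightarrow{h}X$ satisfies $f\circ(\text{composite})=f$, so by right minimality of $f$ it is an automorphism. Hence $X\to X'$ is a split mono, the connecting map $S\to X[1]$ vanishes, and therefore $S\to Y\to X[1]$ is zero. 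Consequently $g$ factors through $C\to Y$, say $g=(C\to Y)\circ c$ with $c\colon S\to C$. But $c$ factors through the approximation $f$, because $S\in\mathcal{F}(\mathcal{S})$, and $(C\to Y)\circ f=0$. Thus $g=0$, a contradiction.

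I expect this minimality and octahedral bookkeeping to be the main obstacle: one must check carefully that the extended map $f'$ restricts to $f$, so that the composite really is an endomorphism $\phi$ of $X$ with $f\phi=f$. Notably, the orthogonality of $\mathcal{S}$ is used only for the summand-closure part. The dual argument, using a minimal left $\mathcal{F}(\mathcal{S})$-approximation $C\to X$ and its cone, gives $({}^{\perp}\mathcal{S},\mathcal{F}(\mathcal{S}))$, and combining the two cases gives the statement for functorially finite $\mathcal{F}(\mathcal{S})$.
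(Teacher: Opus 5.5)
Your proof is correct and follows the standard argument behind the cited result (Iyama--Yoshino, Proposition 2.3): the triangulated Wakamatsu lemma applied to a right minimal $\mathcal{F}(\mathcal{S})$-approximation, with the dual argument for the covariantly finite case. The paper itself gives no proof beyond that citation. The one external input, which you rightly isolate, is that $\mathcal{F}(\mathcal{S})$ is closed under direct summands; this is needed both for the paper's definition of a torsion pair and to obtain a minimal approximation whose domain lies in $\mathcal{F}(\mathcal{S})$. You should quote that fact from the literature: your ``filtered object'' heuristic is not a proof in a triangulated setting, and passing to $\add\mathcal{F}(\mathcal{S})$ would only give the torsion pair $(\add\mathcal{F}(\mathcal{S}),\mathcal{S}^{\perp})$, not the stated one.
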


\begin{Them}$($\cite[Theorem 3.3]{Dugas}$)$\label{subset-of-sms}
Let $\mathcal{T}$ be a Hom-finite Krull-Schmidt  triangulated category. Suppose $\mathcal{X}\subseteq\mathcal{S}$ for a simple-minded system $\mathcal{S}$ in $\mathcal{T}$. Then $(^{\perp}\mathcal{X},\mathcal{F}(\mathcal{X}))$ and $(\mathcal{F}(\mathcal{X}),\mathcal{X}^{\perp})$ are torsion pairs in $\mathcal{T}$.  In particular, $ \mathcal{F}(\mathcal{X})$ is a functorially finite subcategory of $\mathcal{T}$.
\end{Them}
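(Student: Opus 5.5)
The plan is to treat the two torsion pairs symmetrically: I prove that $(\mathcal{F}(\mathcal{X}),\mathcal{X}^{\perp})$ is a torsion pair, and obtain $(^{\perp}\mathcal{X},\mathcal{F}(\mathcal{X}))$ by the dual argument, reversing arrows and using left instead of right approximations. Functorial finiteness is then formal. Given a triangle $X_C\to C\to Y^C\to X_C[1]$ with $X_C\in\mathcal{F}(\mathcal{X})$ and $Y^C\in\mathcal{X}^{\perp}$, apply $\mathcal{T}(X',-)$ for $X'\in\mathcal{F}(\mathcal{X})$. Since $\mathcal{X}^{\perp}=\mathcal{F}(\mathcal{X})^{\perp}$ (Hom-vanishing passes through extensions by the long exact sequence), $\mathcal{T}(X',Y^C)=0$, so $X_C\to C$ is a right $\mathcal{F}(\mathcal{X})$-approximation. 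Dually, the second pair gives left approximations.

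For the first pair, condition (1) is the identity $\mathcal{T}(\mathcal{F}(\mathcal{X}),\mathcal{X}^{\perp})=0$ just noted. Both classes are closed under direct summands in the Krull--Schmidt setting. For $\mathcal{X}^{\perp}$ this is clear. For $\mathcal{F}(\mathcal{X})$ I would either quote the standard fact for extension closures of orthogonal systems, or note that it falls out of the decomposition once the decomposition exists.

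The real content is condition (2): $\mathcal{T}=\mathcal{F}(\mathcal{X})\star\mathcal{X}^{\perp}$. Write $\mathcal{Y}=\mathcal{S}\setminus\mathcal{X}$. By orthogonality of $\mathcal{S}$, $\mathcal{Y}\subseteq\mathcal{X}^{\perp}$, so $\mathcal{X}\cup\mathcal{Y}\subseteq\mathcal{F}(\mathcal{X})\star\mathcal{X}^{\perp}$. Since $\mathcal{T}=\mathcal{F}(\mathcal{S})$, it suffices to show that $\mathcal{F}(\mathcal{X})\star\mathcal{X}^{\perp}$ is extension closed. Using associativity of $\star$ and extension-closedness of $\mathcal{F}(\mathcal{X})$ and of $\mathcal{X}^{\perp}$, this reduces to the swap property
\[\mathcal{X}^{\perp}\star\mathcal{F}(\mathcal{X})\subseteq\mathcal{F}(\mathcal{X})\star\mathcal{X}^{\perp}.\]
This swap property is the main obstacle.

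To prove the swap property I would not work with the triangle directly. Instead, for an arbitrary $C$ I would choose, among all triangles $X'\to C\to C'\to X'[1]$ with $X'\in\mathcal{F}(\mathcal{X})$, one minimising $d(C')=\sum_{X\in\mathcal{X}}\dim_k\mathcal{T}(X,C')$. This is finite by Hom-finiteness and the finiteness of $\mathcal{X}$, itself a consequence of $\mathcal{S}$ being a simple-minded system (finite cardinality, as recalled in the introduction). If $d(C')>0$, pick a nonzero $f\colon X\to C'$ with $X\in\mathcal{X}$ and let $C''$ be its cone. The octahedral axiom gives $X''\in X'\star X\subseteq\mathcal{F}(\mathcal{X})$ with a triangle $X''\to C\to C''\to X''[1]$. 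One then needs $d(C'')<d(C')$, which comes from the exact sequence
\[\mathcal{T}(X_0,X)\to\mathcal{T}(X_0,C')\to\mathcal{T}(X_0,C'')\to\mathcal{T}(X_0,X[1])\]
for $X_0\in\mathcal{X}$. Since $X$ is a brick orthogonal to the rest of $\mathcal{X}$, the first map kills at least one dimension. The difficulty is controlling the new contribution from $\mathcal{T}(X_0,X[1])$.

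To handle that term, I would first try to replace $f$ by a right $\add(\mathcal{X})$-approximation $\bigoplus X_i\to C'$, which exists by Hom-finiteness. With this choice, $\mathcal{T}(\mathcal{X},-)$ of the approximation map is surjective, and one aims to show $\mathcal{T}(\mathcal{X},\text{cone})=0$ after finitely many iterations. Termination would be measured by the length of $C$ as an object of $\mathcal{F}(\mathcal{S})$, i.e.\ an induction on $n$ with $C\in(\mathcal{S})_n$ via $(\mathcal{S})_n=(\mathcal{S})_{n-1}\star(\mathcal{S}\cup\{0\})$. This uses the fact that the relevant extensions $\mathcal{T}(X_0,X[1])$ only appear between composition factors of $C$. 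This termination step is where I expect the genuine work, and possibly an appeal to Dugas's finer lemmas on filtrations in $\mathcal{F}(\mathcal{S})$, to lie.
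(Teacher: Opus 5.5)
There is a genuine gap, at exactly the step you flag yourself. Your set-up is sound. You have $\mathcal{T}(\mathcal{F}(\mathcal{X}),\mathcal{X}^{\perp})=0$ and $\mathcal{S}\subseteq\mathcal{F}(\mathcal{X})\star\mathcal{X}^{\perp}$. By associativity, extension-closedness of $\mathcal{F}(\mathcal{X})\star\mathcal{X}^{\perp}$ reduces to the swap $\mathcal{X}^{\perp}\star\mathcal{F}(\mathcal{X})\subseteq\mathcal{F}(\mathcal{X})\star\mathcal{X}^{\perp}$.

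But you never prove the swap. You switch to a minimisation of $d(C')$ over all triangles $X'\to C\to C'\to X'[1]$. You then observe, correctly, that the term $\mathcal{T}(X_0,X[1])$ can create new $\mathcal{X}$-homs in the cone. After that you only assert that iterating right $\add(\mathcal{X})$-approximations terminates, measured by ``composition factors of $C$'', which are not defined in a triangulated category. This cannot be waved through. For an arbitrary orthogonal system (one not contained in a simple-minded system), $\mathcal{F}(\mathcal{X})$ need not be contravariantly finite, and then the minimum of your $d(C')$ is positive for some $C$. So any termination argument must use $\mathcal{T}=\mathcal{F}(\mathcal{S})$ explicitly, and your sketch does not say how.

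The missing idea is that the swap with a single brick is elementary; no minimisation, and no finiteness of $\mathcal{X}$, is needed. The paper gives no proof of its own here and simply cites Dugas; the following completes your outline.
Let $Y\to N\xrightarrow{\beta}X\xrightarrow{h}Y[1]$ be a triangle with $Y\in\mathcal{X}^{\perp}$ and $X\in\mathcal{X}$. For $X'\in\mathcal{X}$, the vanishing $\mathcal{T}(X',Y)=0$ makes $\mathcal{T}(X',N)\to\mathcal{T}(X',X)$ injective, so $\mathcal{T}(X',N)=0$ for $X'\neq X$. Hence either $N\in\mathcal{X}^{\perp}$, or there is $g\colon X\to N$ with $\beta g\neq 0$. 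In the second case $\beta g$ is invertible because $\mathcal{T}(X,X)\cong k$. So $\beta$ is a split epimorphism, $h=0$, and $N\cong X\oplus Y\in X\star Y$. This gives $\mathcal{X}^{\perp}\star\mathcal{X}\subseteq\mathcal{F}(\mathcal{X})\star\mathcal{X}^{\perp}$.

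From there you have two ways to finish. Induction on length in $\mathcal{F}(\mathcal{X})$ yields your swap property. Alternatively, induct on $n$ with $C\in(\mathcal{S})_n=(\mathcal{S})_{n-1}\star(\mathcal{S}\cup\{0\})$: write $C\in X'\star(Y'\star S)$ with $X'\in\mathcal{F}(\mathcal{X})$ and $Y'\in\mathcal{X}^{\perp}$. The middle object lies in $\mathcal{X}^{\perp}$ if $S\notin\mathcal{X}$, and is either in $\mathcal{X}^{\perp}$ or isomorphic to $S\oplus Y'\in S\star Y'$ if $S\in\mathcal{X}$. The dual argument handles $({}^{\perp}\mathcal{X},\mathcal{F}(\mathcal{X}))$.

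One further correction concerns summand-closure of $\mathcal{F}(\mathcal{X})$. It does not ``fall out'' of the decomposition: for a summand $C_1$ of $C\in\mathcal{F}(\mathcal{X})$, the decomposition triangle $X_1\to C_1\to Y_1\to X_1[1]$ only yields $X_1\cong C_1\oplus Y_1[-1]$. So you do need the separate lemma. It is proved by induction on filtration length, using that a nonzero map from an object of $\mathcal{X}$ to an object of $(\mathcal{X})_n$ has cone in $(\mathcal{X})_{n-1}$.
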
	
A morphism $f:M\rightarrow N$ is said to be {\bf right minimal} if whenever $g:M\rightarrow M$ has the property that $fg=f$, then $g$ is an isomorphism.
Recall that, for a torsion pair $(\mathcal{X},\mathcal{Y})$, a triangle of the form $\xymatrix{X\ar[r]^{\alpha} &C\ar[r]^{\beta} &Y\ar[r]^{}& X[1]}$ is called a {\bf minimal $(\mathcal{X},\mathcal{Y})$-triangle} for $X$ if  $X\in\mathcal{X}$, $Y\in\mathcal{Y}$ and $\alpha$ is a minimal right $\mathcal{X}$-approximation. 
For a subset $\mathcal{X}$ of a simple-minded system $\mathcal{S}$,
by the above Theorem \ref{subset-of-sms}, we have two torsion pairs $(^{\bot}\mathcal{X},\mathcal{F}(\mathcal{X}))$ and $(\mathcal{F}(\mathcal{X}),\mathcal{X}^{\bot})$. 
With the notation as in \cite[Section 4]{Dugas}, we define ${\bf a}\colon\mathcal{T}\rightarrow{^{\bot}\mathcal{X}}$, ${\bf b, c}\colon\mathcal{T}\rightarrow\mathcal{F}(\mathcal{X})$, and ${\bf d}\colon\mathcal{T}\rightarrow\mathcal{X}^{\bot}$ via 
the minimal triangles
\[\xymatrix{{\bf a}X\ar[r]^{ } &X\ar[r]^{} &{\bf b}X\ar[r]^{}& {\bf a}X[1],}\xymatrix{{\bf c}X\ar[r]^{ } &X\ar[r]^{} &{\bf d}X\ar[r]^{}& {\bf c}X[1].}\]
by the above two torsion pairs.   Note that, if $\mathcal{F}(\mathcal{X})$ is functorially finite in $\mathcal{T}$, then the  above {\bf a}, {\bf b}, {\bf c} and {\bf d} are well-defined.

Let $\mathcal{T}$ be a  triangulated category.  A  self-equivalence functor  $\mathbb{S}$  over $\mathcal{T}$ is called {\bf Serre duality} provided that it satisfies the isomorphism
$\mathcal{T}(X,Y)\cong\D\mathcal{T}(Y,\mathbb{S}(X)),$ which is natural in $X$ and $Y$, and  $\D=\Hom_{k}(-,k)$ is  the $k$-duality functor. We always assume that all  triangulated categories admit Serre duality in this paper.

\begin{Lem}$($\cite[Lemma 4.6 and Lemma  4.7]{Dugas}$)$\label{subset-of-sms-bipen}
Let $\mathcal{T}$ be a Hom-finite Krull-Schmidt  triangulated category with Serre duality $\mathbb{S}$. Suppose $\mathcal{X}\subseteq\mathcal{S}$ for a simple-minded system $\mathcal{S}$ in $\mathcal{T}$ such that $\mathbb{S}(\mathcal{X}[1])=\mathcal{X}$. Then
\begin{enumerate}[$(1)$]
\item For any triangle ${\bf a}M \to M \to {\bf b}M \to {\bf a}M[1],$ if $M\in{\mathcal{X}^{\bot}}$, then ${\bf a}M\in{^{\bot}\mathcal{X}^{\bot}}$.
\item For any triangle  ${\bf c}M\to M\to {\bf d}M\to {\bf c}M[1]$, if $M\in{^{\bot}\mathcal{X}}$, then ${\bf d}M\in{^{\bot}\mathcal{X}^{\bot}}$.
\end{enumerate}  
\end{Lem}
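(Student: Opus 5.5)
We prove (1) in detail; (2) is the formal dual, obtained by exchanging left and right approximations and the torsion pairs $(^{\perp}\mathcal{X},\mathcal{F}(\mathcal{X}))$ and $(\mathcal{F}(\mathcal{X}),\mathcal{X}^{\perp})$. Since ${\bf a}M\in{^{\perp}\mathcal{X}}$ by construction, it remains to show $\mathcal{T}(X,{\bf a}M)=0$ for all $X\in\mathcal{X}$.

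\textbf{Reduction via Serre duality.} From $\mathbb{S}(\mathcal{X}[1])=\mathcal{X}$ we get $\mathbb{S}(\mathcal{X})=\mathcal{X}[-1]$, hence also $\mathbb{S}^{-1}(\mathcal{X}[-1])=\mathcal{X}$. Serre duality then gives
\[\mathcal{T}(X,{\bf a}M)\cong \D\,\mathcal{T}({\bf a}M,X'[-1]),\]
where $X'=\mathbb{S}(X)[1]\in\mathcal{X}$. So it suffices to show $\mathcal{T}({\bf a}M,X'[-1])=0$ for every $X'\in\mathcal{X}$.

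\textbf{The long exact sequence.} Rotate the minimal triangle to ${\bf b}M[-1]\to{\bf a}M\to M\xrightarrow{\beta}{\bf b}M$ and apply $\mathcal{T}(-,X'[-1])$. This gives an exact sequence
\[\mathcal{T}(M,X'[-1])\to\mathcal{T}({\bf a}M,X'[-1])\to\mathcal{T}({\bf b}M,X')\xrightarrow{\ \beta^{*}\ }\mathcal{T}(M,X').\]
Hence the claim follows from two facts.
\begin{enumerate}[$(i)$]
\item The left term vanishes. By Serre duality, $\mathcal{T}(M,X'[-1])\cong\D\,\mathcal{T}(\mathbb{S}^{-1}(X'[-1]),M)$. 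Since $\mathbb{S}^{-1}(X'[-1])\in\mathcal{X}$ and $M\in\mathcal{X}^{\perp}$, this is zero.
\item The map $\beta^{*}$ is injective.
\end{enumerate}

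\textbf{Main obstacle: injectivity of $\beta^{*}$.} Here we must use that $\beta\colon M\to{\bf b}M$ is a \emph{minimal} left $\mathcal{F}(\mathcal{X})$-approximation. Suppose $0\neq f\colon{\bf b}M\to X'$ satisfies $f\beta=0$.
\begin{enumerate}[$(a)$]
\item Complete $f$ to a triangle $K\xrightarrow{k}{\bf b}M\xrightarrow{f}X'\to K[1]$. We want $K\in\mathcal{F}(\mathcal{X})$. This should follow from the orthogonality of $\mathcal{X}\subseteq\mathcal{S}$ together with a Jordan--H\"older-type filtration argument as in \cite{Dugas}: a nonzero map from an object of $\mathcal{F}(\mathcal{X})$ to a brick $X'$ of the system behaves like a map onto a composition factor in the top. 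Justifying this step carefully is the delicate point of the whole proof.
\item Since $f\beta=0$, the map $\beta$ factors as $\beta=kg$ for some $g\colon M\to K$.
\item Because $K\in\mathcal{F}(\mathcal{X})$ and $\beta$ is a left approximation, there is $h\colon{\bf b}M\to K$ with $g=h\beta$. Then $kh\beta=\beta$, so left minimality forces $kh$ to be an automorphism.
\item Thus $k$ is a split epimorphism, so $f=0$ in the triangle, contradicting $f\neq0$.
\end{enumerate}
Therefore $\beta^{*}$ is injective. Combined with $(i)$, this gives $\mathcal{T}({\bf a}M,X'[-1])=0$, which completes the proof of (1).
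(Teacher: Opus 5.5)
Your reduction is correct: the Serre-duality translation $\mathcal{T}(X,{\bf a}M)\cong\D\,\mathcal{T}({\bf a}M,X'[-1])$, the exact sequence, the vanishing of $\mathcal{T}(M,X'[-1])$, and steps (b)--(d) all go through. For (c) you need $\beta$ to be left minimal. This does follow from the minimal triangle, because in a Krull--Schmidt triangulated category three conditions are equivalent: ${\bf a}M\to M$ is right minimal, the connecting map ${\bf b}M\to{\bf a}M[1]$ lies in the radical, and $\beta$ is left minimal.

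The genuine hole is step (a). You state it without proof and yourself call it the delicate point, yet the whole injectivity of $\beta^{*}$ rests on it. The paper gives no proof of this lemma (it is quoted from Dugas), so the argument has to stand on its own, and as written it does not.

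Step (a) is true, and it needs only orthogonality and the octahedral axiom. The claim is: if $F\in(\mathcal{X})_n$, $X'\in\mathcal{X}$ and $0\neq f\colon F\to X'$, then the cocone $K$ of $f$ lies in $(\mathcal{X})_{n-1}$. Induct on $n\geq 1$, and choose a triangle $F_1\xrightarrow{u}F\xrightarrow{v}X_1\to F_1[1]$ with $F_1\in(\mathcal{X})_{n-1}$ and $X_1\in\mathcal{X}\cup\{0\}$.
\begin{enumerate}
\item If $fu=0$, then $f=gv$ with $0\neq g\colon X_1\to X'$. Orthogonality and $\mathcal{T}(X',X')\cong k$ force $g$ to be an isomorphism, so $K\cong F_1\in(\mathcal{X})_{n-1}$.
\item If $fu\neq 0$ (which can only happen when $n\geq 2$), the induction hypothesis puts the cocone $K_1$ of $fu$ in $(\mathcal{X})_{n-2}$. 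The octahedral axiom for $F_1\xrightarrow{u}F\xrightarrow{f}X'$ gives a triangle $X_1\to K_1[1]\to K[1]\to X_1[1]$. Rotating, this is $K_1\to K\to X_1\to K_1[1]$, so $K\in(\mathcal{X})_{n-2}\star(\mathcal{X}\cup\{0\})=(\mathcal{X})_{n-1}$.
\end{enumerate}

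Applying the claim to $F={\bf b}M$ gives (a), and your proof of (1) is then complete. Part (2) is indeed the formal dual. It uses the dual claim: the cone of a nonzero map $X''\to F$ with $X''\in\mathcal{X}$ and $F\in\mathcal{F}(\mathcal{X})$ lies in $\mathcal{F}(\mathcal{X})$. It also uses right minimality of ${\bf c}M\to M$, to show that $\mathcal{T}(X'',{\bf c}M)\to\mathcal{T}(X'',M)$ is injective.

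Once (a) is filled in, your argument uses only orthogonality of $\mathcal{X}$, the condition $\mathbb{S}(\mathcal{X}[1])=\mathcal{X}$, and the existence of the minimal triangle. This is consistent with the generalization asserted in the remark following the lemma.
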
	
\begin{Rem}\label{bi-per-contra}
By the proof of Lemma 4.3 and Lemma 4.6 in \cite{Dugas}, it is routine to check that the above Lemma \ref{subset-of-sms-bipen} $(1)$ can be generalized as follows. If $\mathcal{X}$ is a Nakayama-stable orthogonal system and $\mathcal{F}(\mathcal{X})$  is contravariantly finite in $A$-$\stmod$. For any triangle ${\bf a}M \to M \to {\bf b}M \to {\bf a}M[1],$ if $M\in{\mathcal{X}^{\bot}}$, then ${\bf a}M\in{^{\bot}\mathcal{X}^{\bot}}$. There is also a dual result for Lemma \ref{subset-of-sms-bipen} $(2)$, we do not state here.
\end{Rem}
We present \cite{Z} a practical method to verify when an orthogonal system is a simple-minded system in the stable module category of a Brauer graph algebra. 
\begin{Them}$($\cite[Theorem  4.16]{Z}$)$\label{BGA-sms}
Let $A$ be a domestic Brauer graph algebra and  $\mathcal{S}$ an  orthogonal system in $A$-$\stmod$.  Then $\mathcal{S}$ is a simple-minded system if and only if  $\Omega(\mathcal{S})$  in  $\mathcal{F}(\mathcal{S})$ and  $\mathcal{S}$ contains at least one object for each Euclidean component. 	
\end{Them}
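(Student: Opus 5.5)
The plan is to treat the two directions separately. Throughout I use the known shape of the stable Auslander--Reiten quiver of a domestic Brauer graph algebra $A$: finitely many components of Euclidean type $\mathbb{Z}\tilde{A}_{p,q}$, together with tubes (at most two exceptional ones and a $\mathbb{P}^1$-family of homogeneous tubes made of band modules). I also use that $\Omega$ permutes these components and that $A$ is symmetric, so $\nu\cong 1$ and the Serre functor of $A$-$\stmod$ is $\Omega^{-1}$. The guiding idea is that the condition $\Omega(\mathcal{S})\subseteq\mathcal{F}(\mathcal{S})$ makes $\mathcal{F}(\mathcal{S})$ large, namely a triangulated subcategory. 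The condition on Euclidean components then forces this triangulated subcategory to be everything.

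\emph{Necessity.} Let $\mathcal{S}$ be a simple-minded system, so $\mathcal{F}(\mathcal{S})=A$-$\stmod$. Then $\Omega(\mathcal{S})\subseteq\mathcal{F}(\mathcal{S})$ holds trivially.
For the second condition, suppose some Euclidean component $\mathcal{C}$ contains no object of $\mathcal{S}$.
\begin{enumerate}[$(1)$]
\item Pick an indecomposable $X\in\mathcal{C}$ in a suitable position (for instance near the mouth with respect to the maps into $\mathcal{C}$) and write a filtration $S_1\to X\to X'\to S_1[1]$ with $S_1\in\mathcal{S}$. The map $S_1\to X$ may be taken nonzero.
\item Show that this map cannot exist. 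Here I would use the Hom-orthogonality between components: there are no nonzero stable morphisms from tubes to Euclidean components in the domestic case. I would also use the structure of stable morphisms between string modules of different Euclidean components; this may require a careful choice of $X$, e.g.\ one with $\StHom(\mathcal{C}',X)=0$ for every other component $\mathcal{C}'$.
\end{enumerate}

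\emph{Sufficiency.} Assume $\mathcal{S}$ is orthogonal, $\Omega(\mathcal{S})\subseteq\mathcal{F}(\mathcal{S})$, and $\mathcal{S}$ meets every Euclidean component.
\begin{enumerate}[$(1)$]
\item Since $\Omega$ is a triangle functor and $\mathcal{F}(\mathcal{S})$ is the extension closure, we get $\Omega(\mathcal{F}(\mathcal{S}))\subseteq\mathcal{F}(\mathcal{S})$.
\item Show that $\mathcal{F}(\mathcal{S})$ is also closed under $\Omega^{-1}$ and under direct summands, so that it is a thick subcategory. For $\Omega^{-1}$ I would use Hom-finiteness with orthogonality: $\mathcal{S}$ is finite (by the finiteness results of \cite{Dugas}), and $\Omega$ restricted to the indecomposables of $\mathcal{F}(\mathcal{S})$ is injective, hence eventually permutes the relevant finitely many objects up to extension. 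Alternatively, the Serre duality $\StHom(X,Y)\cong\D\StHom(Y,\Omega^{-1}X)$ together with the torsion-pair argument of Theorem~\ref{two-torsion-pairs} gives the same.
\item Show that a thick subcategory containing an indecomposable from each Euclidean component is all of $A$-$\stmod$. Every band module in a tube is the cone of a map between string modules of Euclidean components. The remaining string modules in exceptional tubes are reached by Auslander--Reiten triangles and $\Omega$-shifts starting from the Euclidean components. Iterating Auslander--Reiten triangles inside a component $\mathbb{Z}\tilde{A}_{p,q}$ from one object generates the whole component, using thickness and $\Omega^{\pm1}$.
\end{enumerate}

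I expect the main obstacle to be step $(2)$ of the sufficiency part, namely closure of $\mathcal{F}(\mathcal{S})$ under direct summands and $\Omega^{-1}$. Extension closures are not a priori thick, so I would try first the argument via the two torsion pairs: if $\mathcal{F}(\mathcal{S})$ is functorially finite, then $\mathcal{F}(\mathcal{S})={}^{\perp}(\mathcal{S}^{\perp})$ is closed under summands. Functorial finiteness itself should follow from the finiteness of $\mathcal{S}$ together with the explicit description of morphism spaces between string modules.
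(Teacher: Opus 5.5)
Your proposal has a genuine gap: the necessity half rests on a false Hom-vanishing claim that cannot be repaired by any choice of $X$. In addition, the key step of the sufficiency half is justified circularly.

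\textbf{Necessity.} First a correction: for symmetric $A$ the Serre functor is $\nu\Omega\cong\Omega$, not $\Omega^{-1}$, so $\StHom_A(X,Y)\cong\D\StHom_A(Y,\Omega X)$. Now take $A=k[x,y]/(x^2,y^2)$, the Brauer graph algebra of a single loop, which is $1$-domestic. Its simple module $S$ is non-periodic, so $S$ and $\Omega S$ lie in the Euclidean component. Let $M_\lambda=A/A(x-\lambda y)$, which sits at the mouth of a tube. A nonzero map $S\to\soc M_\lambda$ does not factor through a projective, since such maps land in $\mathrm{rad}^2M_\lambda=0$. Hence $\StHom_A(M_\lambda,\Omega S)\cong\D\StHom_A(S,M_\lambda)\neq0$, a nonzero stable map from a tube into the Euclidean component.

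The same duality gives $\StHom_A(\Omega^{-1}X,X)\cong\D\StHom_A(X,X)\neq0$ for every $X\neq0$. Consider the trivial extension of the Kronecker algebra, which is $2$-domestic and whose two Euclidean components are interchanged by $\Omega$. There every object of one Euclidean component receives nonzero maps from the other. Your claimed vanishing would in fact contradict your own sufficiency step $(3)$: by duality it would give vanishing in both directions, and then no tube object could be a cone of a map between Euclidean objects.

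So the strategy ``some $X\in\mathcal{C}$ admits no nonzero map from $\mathcal{S}$'' is hopeless. What you need is an invariant compatible with extensions that separates the missed component from $\mathcal{S}$. Two examples:
\begin{enumerate}[$(1)$]
\item If $\mathcal{S}$ lies in tubes, complexity works. Periodic modules have complexity $1$, complexity does not increase under extensions, and the Euclidean components consist of modules whose syzygies have unbounded dimension.
\item If $\mathcal{S}$ meets one of two $\Omega$-interchanged Euclidean components but not the other, complexity is useless and something like a linear form on $K_0(A\text{-}\stmod)$ is required. For the Kronecker trivial extension, $(a,b)\mapsto b-a$ on dimension vectors takes the values $+1$ and $-1$ on the two Euclidean components and $0$ on tubes. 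Every class in $\mathcal{F}(\mathcal{S})$ is a nonnegative combination of classes of objects of $\mathcal{S}$, which gives the contradiction.
\end{enumerate}

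\textbf{Sufficiency.} Your skeleton is sensible: show $\mathcal{F}(\mathcal{S})$ is thick, then show a thick subcategory meeting every Euclidean component is everything. It is also how the theorem is used in the proof of Proposition~\ref{sms-domestic-BGA}. But step $(2)$ is circular as justified. The finiteness you quote is a property of simple-minded systems, which is exactly what you are proving. That same unproved finiteness is what you planned to feed into functorial finiteness, and hence into summand-closure.

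Closure under $\Omega^{-1}$ needs no finiteness. Since $A$ is symmetric, $\nu\cong\Id$, so every orthogonal system is Nakayama-stable. Lemma~\ref{omega-self-injective-1} and Proposition~\ref{omega-tri-subcat} then make $\mathcal{F}(\mathcal{S})$ a triangulated subcategory. The argument: by Serre duality, a minimal filtration of $\Omega S_j$ must start with $S_j$, and rotating shows $\Omega^{-1}S_j\in\mathcal{F}(\mathcal{S})$.

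Two things are still missing:
\begin{enumerate}[$(1)$]
\item Closure of $\mathcal{F}(\mathcal{S})$ under direct summands. Your AR-triangle induction in step $(3)$ genuinely uses it, and it needs its own proof.
\item Step $(3)$ itself, that band modules and modules in exceptional tubes are built from the Euclidean components, is only asserted. It requires either explicit string and band combinatorics, or a derived-category model of $A$-$\stmod$ in which tube mouths are cones of maps between objects of the Euclidean components.
\end{enumerate}
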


\subsection{$w$-simple-minded systems}
Coelho Sim\~oes \cite{C} introduced  $w$-simple-minded systems over a Hom-finite, Krull-Schmidt  $k$-linear triangulated category with Serre duality, where integer $w\geq1$.  
Now we recall the definition of a $w$-simple-minded system.


Let $\mathcal{T}$ be a triangulated category with the shift functor $\Sigma$ and Serre duality $\mathbb{S}$. Let $d\in\mathbb{Z}$.  We say $\mathcal{T}$ is  {\bf\,$d$-Calabi-Yau} ($d$-CY for short) provided that there is a natural  isomorphism  $\mathbb{S}\cong \Sigma^{d}$. We write $\mathbb{S}_{w}= \mathbb{S}\Sigma^{-w}$. A subcategory $\mathcal{X}$ of $\mathcal{T}$ is said to be a {\bf $\mathbb{S}_{w}$-subcategory} of $\mathcal{T}$ if $\mathcal{X}=\mathbb{S}_{w}(\mathcal{X})=\mathbb{S}_{w}^{-1}(\mathcal{X})$.

\begin{Def}{\rm(\cite[Section 2]{CP})}\label{w-sms} Let $\mathcal{T}$ be a  Hom-finite, Krull-Schmidt,  $k$-linear triangulated category.
Let $\mathcal{S}=\{S_i\,|\,i\in I\}$ be the set of indecomposable objects of $\mathcal{T}$ and $w$ a positive  integer. 
\begin{enumerate}[$(1)$]
\item A collection of objects  $\mathcal{S}$ of $\mathcal{T}$ is  {\bf$w$-orthogonal}, if 
\begin{enumerate}[$(i)$]
\item For any  $S_i,\,S_j\in\mathcal{S}$, $\mathcal{T}(S_i,S_j)\cong\left\{\begin{array}{ll}  k &\mbox{if }\ i=j;\\
0 & \mbox{otherwise}.\end{array}\right. $
\item If $w\geq 2$, then we have  $\mathcal{T}(\Sigma^kS_i,S_j)=0$ for $1\leqslant k\leqslant w-1$, $S_i,\,S_j\in \mathcal{S}$.
\end{enumerate}
	
	
\item A $w$-orthogonal collection $\mathcal{S}$ of $\mathcal{T}$ is called a {\bf\,$w$-simple-minded system}, if 
\[\mathcal{T}=\add(\mathcal{F}(\mathcal{S}\cup\Sigma^{-1}\mathcal{S}\,\cup,\cdot\cdot\cdot,\cup\,\Sigma^{1-w}\mathcal{S})).\]
\end{enumerate}
\end{Def}

\begin{Rem}\label{CY-sms}
\begin{enumerate}[$(1)$]
\item If  $\mathcal{T}$  is the stable module {\rm(}projective{\rm)} category of a self-injective algebra,  then $1$-simple-minded systems defined by Coelho Sim\~oes  coincide with simple-minded systems introduced by Koenig-Liu or Dugas {\rm(}please refer to \cite{KL} or \cite{Dugas}{\rm)}. 

\item If $\mathcal{T}$ is the stable module category $A$-$\stmod$  of a self-injective  algebra $A$, then a $\mathbb{S}_{-1}$-subcategory $\mathcal{X}$ of $\mathcal{T}$ satisfies condition $\mathbb{S}_{-1}(\mathcal{X})=\mathbb{S}\Sigma(\mathcal{X})=\nu(\mathcal{X})=\mathcal{X}$, that is, $\mathcal{X}$  is 
Nakayama-stable, where $\nu$ is Nakayama functor $\D(_{A}A)\otimes_{A}-$ and  $\mathbb{S}=\nu\Omega=\nu\Sigma^{-1}$ is Serre duality.
Especially, if  $\mathcal{T}$ is the stable module category $A$-$\stmod$  of a symmetric algebra $A$,  then $A$-$\stmod$  is $(-1)$-CY and  any subcategories of  $\mathcal{T}$ is a $\mathbb{S}_{-1}$-subcategory.
\end{enumerate}
\end{Rem}


Inspired by reduction of cluster-tilting objects studied by Iyama and Yoshino \cite{IY}, Coelho Sim\~oes and  Pauksztello \cite{CP} studied reduction of $w$-simple-minded systems and reduction of positive Calabi-Yau triangulated categories.   We recall some definitions before stating their results.

\begin{Them}{\rm(\cite[Theorem  A]{CP})}\label{new-triangulated-category-1-1}
	Let  $\mathcal{T}$ be a Hom-finite, Krull-Schmidt, $k$-linear triangulated category and $w\geq1$. Suppose  $\mathcal{S}$ is a  $w$-orthogonal collection whose extension closure  $\mathcal{F(S)}$ is functorially finite. Moreover assume  $\mathcal{S}$ is a $\mathbb{S}_{-w}$-subcategory of $\mathcal{T}$. Let
	\begin{align*}
		\mathcal{D}=\{D\in \mathcal{T}\,|\, \mathcal{T}(\Sigma^{-i}S,D)=0, \  \mathcal{T}(D,\Sigma^{-i}S)=0,\  for\   \  i=0,\cdot\cdot\cdot,w-1, S\in\mathcal{S} \}.
	\end{align*}
	Then  $\mathcal{D}$ is a triangulated category.
\end{Them}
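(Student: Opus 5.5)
Theorem \ref{new-triangulated-category-1-1} is quoted from \cite{CP}, so a proof has to reproduce the Iyama--Yoshino style reduction argument in the setting of $w$-orthogonal collections. The plan is to present $\mathcal{D}$ as a subfactor category $\mathcal{Z}/[\mathcal{F}]$. Here $\mathcal{F}=\mathcal{F}(\mathcal{S}\cup\Sigma^{-1}\mathcal{S}\cup\cdots\cup\Sigma^{1-w}\mathcal{S})$, or more simply $\mathcal{F}(\mathcal{S})$ when $w=1$, and $\mathcal{Z}$ is a suitable perpendicular category; we then transport the triangulated structure.

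\textbf{Step 1: torsion pairs.} Since $\mathcal{S}$ is $w$-orthogonal, each $\Sigma^{-i}\mathcal{S}$ with $0\le i\le w-1$ is an orthogonal system. Functorial finiteness of $\mathcal{F}(\mathcal{S})$ passes to the shifts, so Theorem \ref{two-torsion-pairs} gives torsion pairs
\[
(\mathcal{F}(\Sigma^{-i}\mathcal{S}),(\Sigma^{-i}\mathcal{S})^{\perp})
\quad\text{and}\quad
({}^{\perp}(\Sigma^{-i}\mathcal{S}),\mathcal{F}(\Sigma^{-i}\mathcal{S})).
\]
Serre duality gives $\mathcal{T}(X,\mathbb{S}Y)\cong \D\mathcal{T}(Y,X)$, and the hypothesis $\mathbb{S}_{-w}\mathcal{S}=\mathcal{S}$ means $\mathbb{S}\mathcal{S}=\Sigma^{-w}\mathcal{S}$. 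Using these two facts I would show that the left and right perpendicular conditions defining $\mathcal{D}$ are matched up to a shift. Concretely, $\mathcal{T}(D,\Sigma^{-i}S)=0$ is equivalent to $\mathcal{T}(\Sigma^{-i}\mathbb{S}^{-1}S,D)=0$, that is, to $\mathcal{T}(\Sigma^{w-i}S',D)=0$ for suitable $S'\in\mathcal{S}$. Thus $\mathcal{D}=\mathcal{Z}$, where $\mathcal{Z}=\{X: \mathcal{T}(\Sigma^{j}\mathcal{S},X)=0 \text{ for } 1-w\le j\le w\}$ is a window of consecutive shifts.

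\textbf{Step 2: the new shift functor.} For $X\in\mathcal{D}$, take a minimal left $\mathcal{F}(\mathcal{S})$-approximation triangle, in the spirit of the ${\bf a},{\bf b},{\bf c},{\bf d}$ triangles and Lemma \ref{subset-of-sms-bipen}/Remark \ref{bi-per-contra}:
\[
\Sigma X\to \Sigma X\langle 1\rangle \to \ldots
\]
Put $X\langle1\rangle={\bf d}$-part, i.e., the cone of the right $\mathcal{F}(\Sigma^{w-1}\ldots)$-approximation of $\Sigma X$. Concretely, I would define $X\langle 1\rangle$ by the triangle $F_X\to \Sigma X\to X\langle1\rangle\to\Sigma F_X$, where $F_X\to\Sigma X$ is the minimal right approximation coming from the torsion pair $(\mathcal{F}(\Sigma\mathcal{S}),(\Sigma\mathcal{S})^\perp)$. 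Running the long exact Hom sequences against the shifts $\Sigma^{-i}\mathcal{S}$, and using $w$-orthogonality to kill the middle terms, shows that $X\langle1\rangle\in\mathcal{D}$. This is the analogue of Lemma \ref{subset-of-sms-bipen}, with the Serre-stability hypothesis playing the role of $\mathbb{S}(\mathcal{X}[1])=\mathcal{X}$. Minimality makes the assignment well defined up to isomorphism, and functoriality follows because morphisms lift through approximations uniquely modulo maps factoring through $\mathcal{F}$. These maps vanish between objects of $\mathcal{D}$ by orthogonality, so in fact no quotient is needed: $\mathcal{D}(X,Y)$ is a genuine Hom-set. An inverse $\langle -1\rangle$ is built dually from left approximations, and the unit and counit isomorphisms come from the octahedral axiom.

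\textbf{Step 3: triangles.} Declare $X\to Y\to Z\to X\langle1\rangle$ distinguished in $\mathcal{D}$ if it arises from a triangle $X\to Y\to Z'\to\Sigma X$ in $\mathcal{T}$ with $Z$ obtained by the same approximation procedure applied to $Z'$. Verifying the axioms (TR1)--(TR4) then follows the Iyama--Yoshino template: (TR1) and (TR3) follow from the lifting properties, and (TR4) follows from repeated octahedra. I expect the main obstacle to be the closure statement in Step 2, namely that the cone of the approximation lands back in $\mathcal{D}$ for all $w$ shifts simultaneously. This requires a careful count showing that the $w$-orthogonality window and the $\mathbb{S}_{-w}$-stability exactly cover the boundary terms $\Sigma^{-w}$ and $\Sigma^{1}$ in the long exact sequences. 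I would first check this for $w=1$, where it is literally Remark \ref{bi-per-contra}, and then induct on the window.
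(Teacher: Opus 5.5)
The paper gives no proof of this statement. It quotes it from \cite{CP} and, for $w=1$, only records the structure: $X[1]'$ is the cone of a minimal right $\mathcal{F}(\mathcal{S})$-approximation $s_X\to\Sigma X$, and standard triangles come from applying the same approximation to the cone $Z_f$ of $f$. Your Step~1, rewriting $\mathcal{D}$ via Serre duality as a one-sided window, is correct, and your Step~3 matches the paper's standard triangles. Step~2, however, has a genuine gap.

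Step~2 wavers between a left $\mathcal{F}(\mathcal{S})$-approximation, an $\mathcal{F}(\Sigma^{w-1}\cdots)$-approximation and, concretely, the torsion pair $(\mathcal{F}(\Sigma\mathcal{S}),(\Sigma\mathcal{S})^{\perp})$. The concrete choice is vacuous. Since $\mathcal{T}(\Sigma\mathcal{S},\Sigma X)\cong\mathcal{T}(\mathcal{S},X)=0$, the approximation is zero, so $X\langle1\rangle=\Sigma X$, which is generally not in $\mathcal{D}$. In your window, the only vanishing that $\Sigma X$ lacks is at the bottom end, $\mathcal{T}(\Sigma^{1-w}\mathcal{S},\Sigma X)\cong\mathcal{T}(\Sigma^{-w}\mathcal{S},X)$. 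For $w=1$ this is $\mathcal{T}(\mathcal{S},\Sigma X)$, which is why the paper approximates by $\mathcal{F}(\mathcal{S})$.

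Closure of $\mathcal{D}$ under the new shift is also not a matter of counting shifts. Take $w=1$, the correct approximation $\alpha\colon s_X\to\Sigma X$, and its cone $Y$. Then $\mathcal{T}(\mathcal{S},Y)=0$ is Wakamatsu's lemma (minimality plus extension-closedness). But $\mathcal{T}(\Sigma S,Y)$ is the kernel of $\alpha_*\colon\mathcal{T}(S,s_X)\to\mathcal{T}(S,\Sigma X)$, and orthogonality does not kill $\mathcal{T}(S,s_X)$. You need $\alpha_*$ to be injective. This holds because the cone of a nonzero map $S\to F$, with $S\in\mathcal{S}$ and $F\in\mathcal{F}(\mathcal{S})$, lies again in $\mathcal{F}(\mathcal{S})$ (induction on the filtration plus the octahedral axiom). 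So a nonzero $s$ with $\alpha s=0$ would make $\alpha$ factor through that cone, contradicting right minimality. This is the kind of argument behind Lemma~\ref{subset-of-sms-bipen} (cf.\ Remark~\ref{bi-per-contra}).

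For $w\ge2$, your plan of doing $w=1$ first and then inducting on the window cannot work with the window you computed. Let $F_X\to\Sigma X$ be a minimal right $\mathcal{F}(\Sigma^{1-w}\mathcal{S})$-approximation with cone $Y$. Since $\mathcal{T}(\Sigma S,X)=0$ and $\mathcal{T}(S,X)=0$, you get
\[
\mathcal{T}(\Sigma^{2}S,Y)\cong\mathcal{T}(\Sigma S,F_X)\cong\D\,\mathcal{T}(F_X,\Sigma^{1-w}\mathbb{S}_{-w}S).
\]
This is nonzero for a suitable $S$ whenever $F_X\neq0$. Indeed, a nonzero object of $\mathcal{F}(\Sigma^{1-w}\mathcal{S})$ maps nontrivially to some $\Sigma^{1-w}S'$, and $\mathbb{S}_{-w}$ permutes $\mathcal{S}$. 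Since $2\in\{1-w,\dots,w\}$, $Y\notin\mathcal{D}$ exactly when a correction is actually needed.

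The source of the problem is the displayed left-hand condition $\mathcal{T}(\Sigma^{-i}S,D)=0$. It is not what $w$-orthogonality gives for $N\in\mathcal{M}\setminus\mathcal{S}$ when $\mathcal{M}\supseteq\mathcal{S}$ is a $w$-simple-minded system, which the bijection in Theorem~\ref{sms-1-1-corresponding-1} requires; that gives $\mathcal{T}(\Sigma^{i}S,N)=0$ instead. With the conditions $\mathcal{T}(\Sigma^{i}S,D)=0=\mathcal{T}(D,\Sigma^{-i}S)$ for $0\le i\le w-1$, your Serre-duality computation gives $\mathcal{D}=\{D:\mathcal{T}(\Sigma^{j}\mathcal{S},D)=0,\ 0\le j\le w\}$. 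Then a single minimal right $\mathcal{F}(\mathcal{S})$-approximation of $\Sigma X$ works for every $w$:
\begin{itemize}
\item $j=0$ holds by Wakamatsu's lemma;
\item $j=1$ holds by the injectivity above;
\item $2\le j\le w$ holds by $w$-orthogonality, since $\mathcal{T}(\Sigma^{j-1}\mathcal{S},\mathcal{F}(\mathcal{S}))=0$.
\end{itemize}
The two readings of the statement coincide for $w=1$, which is the only case the paper uses.
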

Note that  $\mathcal{D}$ is not a triangulated subcategory of $\mathcal{T}$. Coelho Sim\~oes and Pauksztello define a new triangulated structure on  $\mathcal{D}$ (cf. \cite[Section 4]{CP}). When $\mathcal{T}$  is a stable module  category of a self-injective algebra, we state triangulated structure of  $\mathcal{D}$ for $w=1$ as follows.

First, if  $\mathcal{T}$ is the stable module category $A$-$\stmod$ of  a self-injective algebra $A$ and  $w=1$, then  $\mathbb{S}_{-1}$ is  Nakayama functor,  that is,  $\mathcal{S}$ satisfies  Nakayama-stable condition.   In this case, \[\mathcal{D}=\{X\in A\text{-}\stmod\,|\, \StHom_{A}(\mathcal{S},X)=0, \  \StHom_{A}(X,\mathcal{S})=0\},\]
that is,  $\mathcal{D}$ is the stable bi-perpendicular category of  $\mathcal{S}$.

Then we introduce the shift functor and distinguished triangles of $\mathcal{D}$. The action of shift functor $[1]'$ of  $\mathcal{D}$ is as follows. Take $X\in\mathcal{D}$. Consider the triangle in $A$-$\stmod$: \[\xymatrix{S_{X} \ar[r]^-{\alpha_{X}} &  X[1] \ar[r]^-{\beta_{X}} & Y\ar[r]^-{\gamma_{X}} & S_{x}[1],}\]
where $\alpha_{X}$ is a right minimal  $\mathcal{F(S)}$-approximation of $X[1]$.  We define  $X[1]':=Y$.  
$[1]'$ is defined on morphism as follows. Given a morphism $f: X\rightarrow Y$ on $\mathcal{D}$. Consider the following commutative diagram:

\[\xymatrix{S_{X}\ar@{-->}[d]_{\sigma} \ar[r]^{\alpha_{X}} & X[1]\ar[d]_{f[1]} \ar[r]^{\beta_{X}} &X[1]'	 \ar[d]_{g} \ar[r]^{\gamma_{X}} & S_{X}[1] \ar[d]^{\sigma[1]} \\
	S_{Y} \ar[r]^{\alpha_{Y}} & Y[1]\ar[r]^{\beta_{Y}} &Y[1]'	 \ar[r]^{\gamma_{Y}} &  S_{Y}[1].}\]
We define $[1]'f:=g$. 

The distinguished triangle on $\mathcal{D}$ is as follows:
Given a morphism $f\colon X\rightarrow Y$ on $\mathcal{D}$. Extend  $f$ to be a triangle on $A$-$\stmod$ and consider the following commutative diagram:
$$\xymatrix{
	&   & s_f \ar[d]_-{\alpha} \ar[r]^-{\sigma} & s_X \ar[d]^-{\alpha_X}  \\
	X \ar[r]^-{f} & Y \ar[dr]_-{\beta g_1} \ar[r]^-{g_1} &Z_f \ar[d]_-{\beta} \ar[r]^-{h_1} &  X[1] \ar[d]^-{\beta_X} & (\Diamond)
	\\
	&   & Z \ar[d]_-{\gamma} \ar[r]^-{h} & X[1]' \ar[d]^-{\gamma_X}  \\	
	& &  s_f[1] \ar[r]_-{\sigma[1]} &  s_X[1],\\
	& & (\vartriangle) &  (\Box)}$$
where $\alpha$ (resp. $\alpha_X$) is a right minimal   $\mathcal{F(S)}$-approximation of $Z_f$ (resp. $ X[1]$);  $(\Diamond)$, $(\Box)$ and  $(\vartriangle)$ are triangles of  $A$-$\stmod$. We define  
\[\xymatrix{X \ar[r]^-{f} &  Y \ar[r]^-{\beta g_{1}} & Z\ar[r]^-{h} & X[1]'}\]
to be {\bf standard distinguished triangle} on $\mathcal{D}$. Let $\varepsilon'$ be the set of sequences of the form \[\theta\colon\!\!\xymatrix{M \ar[r]^-{a} &  N \ar[r]^-{b} & H\ar[r]^-{c} & M[1]',}\]  where $M,N$ and $H$ are in $\mathcal{D}$, and  $\theta$ is isomorphic to a standard distinguished triangle. {\bf $(\mathcal{D},[1]',\varepsilon')$ is a triangulated category.}

The following theorem provides a reduction of $w$-simple-minded systems.
\begin{Them}{\rm(\cite[Theorem B]{CP})}\label{sms-1-1-corresponding-1} 
Let  $\mathcal{T}$ be a Hom-finite, Krull-Schmidt, $k$-linear triangulated category and $w\geq1$. Suppose  $\mathcal{S}$ is a  $w$-orthogonal collection whose extension closure  $\mathcal{F(S)}$ is functorially finite and  $\mathcal{D}$ be as in Theorem \ref{new-triangulated-category-1-1}. Then there is a bijection, 
	\begin{equation*}
		\{w{\text-}simple{\text-}minded\  systems\  in\  \mathcal{T}\ containing\  \mathcal{S}\}\longleftrightarrow\{w{\text-}simple{\text-}minded\  systems\  in\ \mathcal{D}\}
	\end{equation*}
Moreover $\sigma$ takes $\mathcal{M}$ to $\mathcal{M}\backslash\mathcal{S}$,  where $\mathcal{M}$ is a  $w$-simple-minded system containing $\mathcal{S}$. Conversely, $\sigma^{-1}$ takes $\mathcal{N}$ to  $\mathcal{N}\cup\mathcal{S}$, where $\mathcal{N}$ is  a $w$-simple-minded system in $\mathcal{D}$. 
\end{Them}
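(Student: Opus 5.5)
The plan follows the Iyama–Yoshino style of reduction that Coelho Sim\~oes and Pauksztello adapt. We first establish a comparison dictionary between $\mathcal{D}$, with its structure $([1]',\varepsilon')$ from Theorem \ref{new-triangulated-category-1-1}, and $\mathcal{T}$. Then we check the two maps $\sigma\colon\mathcal{M}\mapsto\mathcal{M}\backslash\mathcal{S}$ and $\sigma^{-1}\colon\mathcal{N}\mapsto\mathcal{N}\cup\mathcal{S}$ separately. Once both are shown to be well defined, they are obviously mutually inverse.

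\textbf{Dictionary.} There are two facts to prove.
\begin{enumerate}[$(i)$]
\item Morphism spaces in $\mathcal{D}$ are those of $\mathcal{T}$, by definition. For $X,Y\in\mathcal{D}$ and $1\le k\le w-1$, we aim for
\[\mathcal{D}(X[-k]',Y)\cong\mathcal{T}(\Sigma^{-k}X,Y).\]
To get this, apply $\mathcal{T}(-,Y)$ to the defining triangle $S_X\to X[1]\to X[1]'\to S_X[1]$ and its iterates. Then use $Y\in\mathcal{D}$, so that $Y$ is orthogonal to $\Sigma^{-i}\mathcal{S}$ for $0\le i\le w-1$, to kill the terms involving $\mathcal{F}(\mathcal{S})$.
\item The second fact concerns extension closures: $\mathcal{F}_{\mathcal{T}}(\mathcal{X}\cup\mathcal{S}')\cap\mathcal{D}=\mathcal{F}_{\mathcal{D}}(\mathcal{X})$. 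Here $\mathcal{S}'=\mathcal{S}\cup\cdots\cup\Sigma^{1-w}\mathcal{S}$, $\mathcal{X}$ is a family in $\mathcal{D}$, and shifts on the right-hand side are taken in $\mathcal{D}$. The inclusion $\supseteq$ comes from diagram $(\Diamond)$: the third term $Z$ of a standard triangle is obtained from $Z_f$ by cutting off its $\mathcal{F}(\mathcal{S})$-part. For $\subseteq$, we reduce a $\mathcal{T}$-filtration of an object of $\mathcal{D}$ step by step, using the minimal triangles ${\bf a},{\bf b},{\bf c},{\bf d}$ for the torsion pairs $(\mathcal{F}(\mathcal{S}),\mathcal{S}^{\perp})$ and $({}^{\perp}\mathcal{S},\mathcal{F}(\mathcal{S}))$. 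These torsion pairs exist by Theorem \ref{two-torsion-pairs}, and Remark \ref{bi-per-contra} guarantees that the reduced pieces land in $\mathcal{D}$.
\end{enumerate}

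\textbf{The map $\sigma$.} Let $\mathcal{M}$ be a $w$-simple-minded system containing $\mathcal{S}$. First, $\mathcal{M}\backslash\mathcal{S}\subseteq\mathcal{D}$. The vanishing of $\mathcal{T}(N,\Sigma^{-i}S)$ is part of $w$-orthogonality. For the other direction, Serre duality and $\mathbb{S}_{-w}$-stability give
\[\D\mathcal{T}(\Sigma^{-i}S,N)\cong\mathcal{T}(N,\mathbb{S}\Sigma^{-i}S)=\mathcal{T}\bigl(N,\Sigma^{w-i}\mathbb{S}_{-w}S\bigr)\cong\mathcal{T}\bigl(N,\Sigma^{-(i-w)}\mathcal{S}\bigr),\]
and this vanishes by $w$-orthogonality in the range needed. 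Next, $w$-orthogonality of $\mathcal{M}\backslash\mathcal{S}$ inside $\mathcal{D}$ follows from $(i)$. Generation follows from $(ii)$ together with
\[\mathcal{T}=\add\mathcal{F}_{\mathcal{T}}\bigl(\mathcal{M}\cup\cdots\cup\Sigma^{1-w}\mathcal{M}\bigr).\]

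\textbf{The map $\sigma^{-1}$.} Let $\mathcal{N}$ be a $w$-simple-minded system in $\mathcal{D}$. The mixed vanishing conditions between $\mathcal{N}$ and $\mathcal{S}$ hold because $\mathcal{N}\subseteq\mathcal{D}$, and the conditions among elements of $\mathcal{N}$ follow from $(i)$; so $\mathcal{N}\cup\mathcal{S}$ is $w$-orthogonal. For generation, take $X\in\mathcal{T}$ and decompose it by the torsion triangles ${\bf c}X\to X\to{\bf d}X$ and ${\bf a}({\bf d}X)\to{\bf d}X\to{\bf b}({\bf d}X)$. The ${\bf b}$- and ${\bf c}$-parts lie in $\mathcal{F}(\mathcal{S})$. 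Iterating over the shifts $\Sigma^{-i}\mathcal{S}$ leaves a middle piece in $\mathcal{D}$, and that piece lies in the $\mathcal{D}$-closure of $\mathcal{N}$ and its $[-i]'$-shifts. By $(ii)$, it then lies in $\add\mathcal{F}_{\mathcal{T}}$ of $\mathcal{N}\cup\mathcal{S}$ and their $\Sigma^{-i}$-shifts.

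\textbf{Main obstacle.} I expect the hard step to be $(ii)$, in particular the inclusion $\subseteq$. One must show that cutting $\mathcal{F}(\mathcal{S})$-parts off the terms of a $\mathcal{T}$-triangle produces a genuine distinguished triangle in $\varepsilon'$. The first approach to try is the octahedral axiom applied to diagram $(\Diamond)$. The essential input is minimality of the approximations, which makes the cut-off objects well defined up to isomorphism. The uniqueness argument also has to be compatible with taking direct summands, since generation is only up to $\add$.
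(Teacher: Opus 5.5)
The paper gives no proof of this statement; it is quoted from \cite{CP}. Your argument therefore has to stand on its own, and as written it has two genuine gaps.

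\textbf{The inclusion $\mathcal{M}\backslash\mathcal{S}\subseteq\mathcal{D}$ for $w\ge 2$.} Your Serre-duality chain is incorrect.

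With the paper's convention $\mathbb{S}_{w}=\mathbb{S}\Sigma^{-w}$, one has $\mathbb{S}_{-w}=\mathbb{S}\Sigma^{w}$. Hence $\mathbb{S}\Sigma^{-i}S=\Sigma^{-(w+i)}\mathbb{S}_{-w}S$, and so $\D\mathcal{T}(\Sigma^{-i}S,N)\cong\mathcal{T}(\Sigma^{w+i}N,\mathbb{S}_{-w}S)$. Since $w+i\ge w$, this group lies outside the range controlled by $w$-orthogonality. Even on your reading, $\mathcal{T}(N,\Sigma^{w-i}\mathcal{S})$ is a Hom into a \emph{positive} shift, whereas $w$-orthogonality only kills $\mathcal{T}(N,\Sigma^{-k}\mathcal{S})$ for $1\le k\le w-1$.

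In fact the inclusion is false for $\mathcal{D}$ as displayed. Take the $(-2)$-Calabi--Yau orbit category $\mathcal{T}=D^{b}(kA_2)/\tau\Sigma^{3}$, with simples $S_1,S_2$ such that $\mathsf{Ext}^1(S_1,S_2)\neq0$, and let $P_1$ be the projective-injective module.
\begin{itemize}
\item The images of $S_1,S_2$ form a $2$-simple-minded system, yet $\mathcal{T}(\Sigma^{-1}S_1,S_2)\supseteq\mathsf{Ext}^1(S_1,S_2)\neq0$.
\item For $\mathcal{S}=\{S_1\}$ one gets $\mathcal{D}=\add(\Sigma P_1)$, so $S_2\notin\mathcal{D}$.
\item Both $\{S_1,S_2\}$ and $\tau^{-1}\{S_1,S_2\}=\{S_1,\Sigma P_1\}$ contain $S_1$, so the displayed statement itself fails here.
\end{itemize}
The reduced category should instead impose $\mathcal{T}(\Sigma^{i}S,D)=0=\mathcal{T}(D,\Sigma^{-i}S)$ for $0\le i\le w-1$. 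In the example this gives $\add(S_2\oplus\Sigma P_1)$, and in general membership then is literally $w$-orthogonality.

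Your dictionary $(i)$ has a related problem. $w$-orthogonality in $\mathcal{D}$ concerns $\mathcal{D}(X[k]',Y)$ for $k\ge1$, not $X[-k]'$. Also, for $w\ge2$ the term $\mathcal{T}(\Sigma S_X,Y)$ in your long exact sequence is not killed by the displayed conditions. For $w=1$, the only case the paper uses, the two definitions of $\mathcal{D}$ agree and the inclusion is just orthogonality.

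\textbf{Generation of $\mathcal{D}$ by $\mathcal{M}\backslash\mathcal{S}$.} This is your step $(ii)$, inclusion $\subseteq$, and it is the real content of the theorem. You only flag it, and the suggested route does not close by itself, already for $w=1$.

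Let $X\in\mathcal{D}$ have a $\mathcal{T}$-filtration beginning with $M\to X\to W\to\Sigma M$, where $M\in\mathcal{M}$. In general $W\notin\mathcal{D}$.
\begin{itemize}
\item If $M\notin\mathcal{S}$, diagram $(\Diamond)$ gives a triangle $M\to X\to Z\to M[1]'$ in $\varepsilon'$, with $Z$ the cone of the minimal right $\mathcal{F}(\mathcal{S})$-approximation of $W$. To continue you need $Z$ to admit a shorter filtration than $X$. Nothing in your proposal controls filtration length when $\mathcal{F}(\mathcal{S})$-parts are cut off; there is no composition length in $\mathcal{T}$ to appeal to.
\item If $M\in\mathcal{S}$, the map $M\to X$ is zero because $X\in\mathcal{S}^{\perp}$. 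The triangle splits and $X$ is merely a direct summand of $W$, so the induction hypothesis must be stable under summands. This is not addressed either.
\end{itemize}
The octahedral axiom applied to $(\Diamond)$ only treats a single triangle whose first two terms already lie in $\mathcal{D}$.

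Nor is there a functorial shortcut, i.e.\ a triangle functor $\mathcal{T}\to\mathcal{D}$ that kills $\mathcal{S}$ and is the identity on $\mathcal{D}$. Take the self-injective Nakayama algebra with two simples and Loewy length $3$, and let $\mathcal{S}$ be one simple. Then $\Omega$ cycles through all four non-projective indecomposables, so $\mathsf{thick}(\mathcal{S})$ is the whole stable category, while $\mathcal{D}$ is $\add$ of the other simple and is nonzero.

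What is missing is an argument that converts a whole $\mathcal{T}$-filtration into an $\varepsilon'$-filtration, with a parameter that genuinely decreases and that handles the split case. By contrast, $(ii)$ $\supseteq$ via $(\Diamond)$ is fine for $w=1$. So is the $\sigma^{-1}$ direction, via ${\bf c},{\bf d},{\bf a},{\bf b}$ and Remark \ref{bi-per-contra}.
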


\section{Reduction technique of simple-minded systems}
Starting from this section, we mainly consider  stable module category $A$-$\stmod$ of a self-injective algebra $A$ as a triangulated category. Note that $A$-$\stmod$  is a triangulated category with suspension functor $[1]$, where $[1]$ is the cosyzygy functor $\Omega^{-1}$. Its distinguished triangles are induced by short exact sequences in $A$-mod.  In this section, we study reduction of simple-minded systems.   From now on, we assume that $A$ is a self-injective $k$-algebra. We give a sufficient condition for $eAe$ to be a self-injective algebra, where $e$ is an idempotent element. 

\begin{Lem}{\rm(\cite[Chapter IV, Theorem  4.1]{SY})}\label{symmetric-algebra}
Let $A$ be a finite-dimensional  $k$-algebra.  If $A$ is a symmetric algebra, then for every idempotent $e$ of $A$, the algerba $eAe$ is symmetric.
\end{Lem}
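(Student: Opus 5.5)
The plan is to use the characterization of symmetric algebras by a symmetrizing form. A finite-dimensional $k$-algebra $A$ is symmetric, i.e.\ $A\cong \D(A)$ as $A$-$A$-bimodules, if and only if there is a $k$-linear form $\lambda\colon A\to k$ with two properties. First, $\lambda(ab)=\lambda(ba)$ for all $a,b\in A$. Second, the associated bilinear form $(a,b)\mapsto\lambda(ab)$ is nondegenerate; equivalently, $\ker\lambda$ contains no nonzero one-sided ideal. The bimodule isomorphism $A\to\D(A)$ sends $1$ to $\lambda$, and conversely such a $\lambda$ defines the isomorphism $a\mapsto\lambda(-\,a)$. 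Fix such a $\lambda$ for $A$ and a nonzero idempotent $e$; if $e=0$ the statement is vacuous. I propose to show that the restriction $\lambda_e:=\lambda|_{eAe}$ is a symmetrizing form for the finite-dimensional algebra $B=eAe$, whose identity is $e$.

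Symmetry of $\lambda_e$ is immediate, since $\lambda(xy)=\lambda(yx)$ already holds for all $x,y\in A$, in particular for $x,y\in eAe$. The only real step is nondegeneracy. Let $x\in eAe$ satisfy $\lambda(xy)=0$ for all $y\in eAe$; we must show $x=0$. Take an arbitrary $a\in A$. Since $x=exe$, we have $xa=exea$. By the symmetry of $\lambda$ on $A$,
\[
\lambda(xa)=\lambda\bigl(e\cdot(xea)\bigr)=\lambda\bigl((xea)\cdot e\bigr)=\lambda\bigl(x\,(eae)\bigr).
\]
This vanishes by hypothesis, because $eae\in eAe$. Hence $\lambda(xa)=0$ for every $a\in A$, and nondegeneracy of $\lambda$ on $A$ forces $x=0$. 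So the pairing $(x,y)\mapsto\lambda(xy)$ on $eAe$ is nondegenerate. As $eAe$ is finite-dimensional, the map $eAe\to\D(eAe)$, $x\mapsto\lambda_e(-\,x)$, is then a linear isomorphism.

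It remains to check that this map is a $B$-$B$-bimodule isomorphism. This follows from the same computation as for $A$, using associativity together with the symmetry of $\lambda_e$. Therefore $eAe$ is symmetric.

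I expect no serious obstacle. The one point needing care is the middle identity in the display: one must move the idempotent $e$ around using the trace property of $\lambda$, so that an arbitrary element $a\in A$ gets replaced by $eae\in eAe$. This is exactly where symmetry, and not mere self-injectivity, is used; for a general self-injective $A$ the corner algebra $eAe$ need not be self-injective, which is why the paper records this lemma only for symmetric algebras.
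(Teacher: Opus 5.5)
Your proof is correct. The paper gives no argument of its own for this lemma and only cites \cite[Chapter IV, Theorem 4.1]{SY}; restricting a symmetrizing form $\lambda$ to $eAe$ and using the trace identity $\lambda(xa)=\lambda\bigl(x\,(eae)\bigr)$ for $x\in eAe$ to get nondegeneracy is the standard proof of that result, and your bimodule check (where symmetry of $\lambda_e$ is needed only for right $eAe$-linearity) goes through as you say.
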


\begin{Them}{\rm(\cite[Chapter IV, Theorem  6.1]{SY})}\label{self-injective-algebra}
Let $A$ be a finite-dimensional  $k$-algebra with $n_{A}$ simple modules. Then  $A$ is a self-injective algebra, if and only if there is a permutation $\nu$ on set   $\{ 1,\cdots\!,\, n_A\}$ such that $\top(Ae_i)\cong$ $\soc(Ae_{\nu(i)})$ on $A$-$\m$, where $i\in\{ 1,\cdots\!,\, n_A\}$.
\end{Them}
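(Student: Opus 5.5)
The plan is to pass to the basic algebra and then compare indecomposable projectives with indecomposable injectives. I will use the standard facts that $A$ is self-injective if and only if ${}_AA$ is injective, and that the indecomposable injective $A$-modules are exactly the injective envelopes $I(S_i)\cong \D(e_iA)$ of the simples $S_i=\top(Ae_i)$. Both self-injectivity and the condition in the statement are Morita invariant: the second is a condition on the indecomposable projectives and the simples only. So I first replace $A$ by its basic algebra. Then ${}_AA=\bigoplus_{i=1}^{n_A}Ae_i$ and $\D(A_A)=\bigoplus_{i=1}^{n_A}\D(e_iA)$, each summand occurring exactly once.

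($\Rightarrow$) Suppose $A$ is self-injective. Each $Ae_j$ is then an indecomposable injective module, so $Ae_j\cong I(S)$ for a unique simple $S$, and $\soc(Ae_j)\cong S$ is simple. Non-isomorphic indecomposable injectives have non-isomorphic socles. Hence $j\mapsto \soc(Ae_j)$ is an injective map from the $n_A$ indecomposable projectives to the $n_A$ simples, and therefore a bijection. Defining $\nu(i)$ as the unique index with $\soc(Ae_{\nu(i)})\cong S_i=\top(Ae_i)$ gives the required permutation.

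($\Leftarrow$) Read the hypothesis as saying that each $\soc(Ae_{\nu(i)})$ is isomorphic to the simple module $S_i$. Since the socle is an essential submodule, the injective envelope of $Ae_{\nu(i)}$ is $I(S_i)\cong \D(e_iA)$. This gives a monomorphism $Ae_{\nu(i)}\hookrightarrow \D(e_iA)$, so
\[
\dim_k Ae_{\nu(i)}\le \dim_k e_iA \quad\text{for every } i.
\]
Sum over $i$, using that $\nu$ is a permutation and $A$ is basic:
\[
\sum_i \dim_k Ae_{\nu(i)}=\dim_k A=\sum_i \dim_k e_iA.
\]
So every inequality is an equality, each monomorphism is an isomorphism, and hence each $Ae_j$ is injective. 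Thus ${}_AA$ is injective and $A$ is self-injective.

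The only delicate point is the bookkeeping in ($\Leftarrow$). Without first reducing to the basic case, the multiplicities of $Ae_{\nu(i)}$ in ${}_AA$ and of $\D(e_iA)$ in $\D(A_A)$ need not match, and the dimension count would fail. This is exactly why I pass to the basic algebra via Morita equivalence before counting. Algebraic closedness of $k$ is not really needed here beyond ensuring that the basic algebra is well behaved.
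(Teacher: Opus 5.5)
Your proof is correct. The paper gives no proof of this statement. It is quoted from \cite[Chapter IV, Theorem 6.1]{SY} and invoked only in the direction $(\Leftarrow)$, in the proof of Corollary~\ref{eAe-self-injective}, so there is no in-paper argument to compare yours with.

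What you give is the standard self-contained argument. An indecomposable injective is determined by its simple socle. A finite-dimensional module embeds in the injective envelope of its socle. A dimension count then turns the embeddings $Ae_{\nu(i)}\hookrightarrow \D(e_iA)$ into isomorphisms.

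This count is what lets a condition on left projectives alone suffice; for general artinian rings, Nakayama's characterization of QF rings needs the permutation condition on both sides. Your reduction to the basic algebra is genuinely needed for the count. In a non-basic self-injective algebra that is not Frobenius, one can have $\dim_k S_i\neq \dim_k S_{\nu(i)}$. Then the multiplicity of $Ae_{\nu(i)}$ in ${}_AA$ differs from that of $\D(e_iA)$ in $\D(A_A)$, and the sums do not match.

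Two minor points. First, you use, as the statement implicitly does, that $Ae_1,\dots,Ae_{n_A}$ are pairwise non-isomorphic. This is what makes $j\mapsto \soc(Ae_j)$ injective in $(\Rightarrow)$, and what gives $\sum_j \dim_k Ae_j=\dim_k A$ after the reduction. Second, your closing remark about algebraic closedness can be dropped. The Morita reduction, the isomorphism $I(S_i)\cong \D(e_iA)$, and the count all work over any field.
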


\begin{Cor}\label{eAe-self-injective}
Let $A$ be a finite-dimensional  self-injective algebra. Let  $E$ be a set of primitive idempotent elements and take  $e=\Sigma_{e_i\in E}\  e_i$. If  $E$ is  Nakayama-stable {\rm(}that is,  Nakayama functor permutes the simple modules which are corresponded by  primitive idempotent elements in $E${\rm)}, then  $eAe$  is a self-injective algebra.
\end{Cor}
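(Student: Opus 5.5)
We prove the corollary with the permutation criterion of Theorem \ref{self-injective-algebra}, applied to $B=eAe$. The plan is to describe the indecomposable projective $B$-modules and their tops and socles, and then check that the Nakayama permutation of $A$ restricts to $E$.

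First we fix notation and identify the projectives. Write $E=\{e_{i}\mid i\in I\}$ with $I\subseteq\{1,\dots,n_A\}$, and let $\nu$ be the Nakayama permutation of $A$, so that $\top(Ae_i)\cong\soc(Ae_{\nu(i)})$. The hypothesis that $E$ is Nakayama-stable means exactly that $\nu(I)=I$. The idempotents $e_i$ with $i\in I$ are primitive orthogonal idempotents of $B$ summing to the identity $e$. They are still primitive in $B$, because $e_iBe_i=e_iAe_i$ is local. Hence the indecomposable projective $B$-modules are $Be_i=eAe_i=\Res_e(Ae_i)$ for $i\in I$, and the simple $B$-modules are $e S_i$ for $i\in I$. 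These are pairwise non-isomorphic, and $\dim_k e_jS_i=\delta_{ij}$.

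Next we identify the socle of $eAe_{\nu(i)}$ for $i\in I$. The key point is that $\Res_e$ sends simple modules $S$ with $eS\neq0$ to simple $B$-modules. It is also exact, so it sends submodules to submodules. Let $0\neq U\subseteq eAe_{\nu(i)}$ be a $B$-submodule. Then $AU\subseteq Ae_{\nu(i)}$ is a nonzero $A$-submodule, so it contains $\soc(Ae_{\nu(i)})\cong S_i$. Since $Ae_{\nu(i)}$ is injective indecomposable over the self-injective algebra $A$, this socle is simple and essential. Because $i\in I$, we have $eS_i\neq0$, so $e\,\soc(Ae_{\nu(i)})\neq0$, and
\[
e\,\soc(Ae_{\nu(i)})\subseteq eAU=eAeU=U.
\]
Thus every nonzero $B$-submodule of $eAe_{\nu(i)}$ contains the simple $B$-module $e\,\soc(Ae_{\nu(i)})\cong eS_i$. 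So $\soc_B(Be_{\nu(i)})\cong eS_i\cong \top_B(Be_i)$. Here $\top(Be_i)=eS_i$, since $Be_i/\mathrm{rad}=e(Ae_i/\mathrm{rad}\,Ae_i)$ by the standard description of $\mathrm{rad}\,B=e(\mathrm{rad}A)e$.

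Finally, $\nu|_I$ is a permutation of $I$ satisfying the condition of Theorem \ref{self-injective-algebra} for $B$. Therefore $eAe$ is self-injective.

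The main obstacle is the socle computation: one must ensure that restriction does not enlarge the socle. The essential-socle argument via $AU$ handles this, and it is exactly where Nakayama-stability is used, to guarantee that $eS_i\neq0$.
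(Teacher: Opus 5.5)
Your proof is correct. It follows the same overall strategy as the paper: apply the permutation criterion of Theorem \ref{self-injective-algebra} to $B=eAe$ using $\nu|_I$. The difference is in the key socle step.

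\textbf{The socle step.} The paper identifies $eAe_{\nu(i)}\cong e\D(e_iA)\cong\D(e_iAe)=\D(e_iB)$. So $Be_{\nu(i)}$ is an indecomposable injective $B$-module and has simple socle, and the nonzero simple submodule $e\,\soc(Ae_{\nu(i)})$ must be that socle. You argue internally instead. A nonzero $B$-submodule $U\subseteq eAe_{\nu(i)}$ generates $AU\supseteq\soc(Ae_{\nu(i)})$, and $eAU=eAeU=U$ then forces $e\,\soc(Ae_{\nu(i)})\subseteq U$.

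\textbf{What each route buys.} Your argument avoids duality altogether. It also proves something more general: restriction to $eAe$ preserves a simple socle that is not annihilated by $e$, for any finite-dimensional $A$-module, injective or not. The paper's identification gives more than the paper actually uses. Since $Be_{\nu(i)}\cong\D(e_iB)$, every indecomposable projective $B$-module is already injective, so on that route the socle criterion is not even needed. The top computation is essentially the same in both: you use $\mathrm{rad}\,B=e(\mathrm{rad}\,A)e$, the paper uses the projective-cover argument.

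\textbf{A phrasing point.} Your closing remark says Nakayama-stability is what guarantees $eS_i\neq 0$, but that is automatic from $i\in I$. What stability really gives is $\nu(i)\in I$, so that $eAe_{\nu(i)}$ is one of the indecomposable projective $B$-modules. Equivalently, indexing by $j\in I$, it ensures that the socle $S_{\nu^{-1}(j)}$ of $Ae_j$ is not killed by $e$.
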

\begin{proof}	
By Theorem \ref{self-injective-algebra}, we have $\top(Ae_j)\cong$ $\soc(Ae_{\nu(j)})$ on $A$-$\m$, where $e_{j}$ is a  primitive idempotent element and  $\nu$ a permutation on set   $\{ 1,\cdots\!,\, n_A\}$. Since $E$ is Nakayama-stable,  we have $e_{\nu(i)}\in E$, where $e_i\in E$.  Note that the unit of  $eAe$ is $e$, Nakayama functor $\D(A)\otimes_{A}-$ takes  simple module $S_i$ to be  simple module $S_{\nu(i)}$, where  $S_i$ is the simple module corresponding to  primitive idempotent element  $e_i$. 	It is known that  restriction functor $e(-)$ is an exact functor and $e(\add(Ae))\in\add(eAe)$ is a projective $eAe$-module.  Note that $e(-)$ does not preserve projective modules. By Theorem \ref{self-injective-algebra}, it suffices to show that for all $e_i\in E$, we have  $\top(eAe_i)\cong$ $\soc(eAe_{\nu(i)})$.

Take  a primitive idempotent element $e_i\in E$ and  projective cover $\pi_{i}\colon Ae_i\twoheadrightarrow \top(Ae_i)$.
Since restriction functor  $e(-)\colon$$A$-$\m\rightarrow eAe$-$\m$ is exact, it takes epimorphism  $\pi_{i}\colon Ae_i\twoheadrightarrow \top(Ae_i)$ to be epimorphism $e(\pi_{i})\colon eAe_i\twoheadrightarrow$ $e\top(Ae_i)$. Since $eAe_i$ is an indecomposable  projective  $eAe$-module and  $e\top(Ae_i)$ is a non-zero $eAe$-module, $e(\pi_{i})$ is a projective cover  of $e\top(Ae_i)$ in $eAe$-$\m$. Thus  $e\top(Ae_i)\cong \top(eAe_i)$.
Since  $e(-)$ is exact, it takes monomorphism  $\iota_{i}\colon\soc(Ae_{\nu(i)})\hookrightarrow Ae_{\nu(i)}$  to be monomorphism  $e(\iota_{i})\colon e\soc(Ae_{\nu(i)})\hookrightarrow eAe_{\nu(i)}$.
Since  $\D(e_iA)\cong Ae_{\nu(i)}$, $e\D(e_iA)\cong\D(e_iAe)$ and  $eAe_{\nu(i)}\cong\D(e_iAe)$ is an indecomposable injective $eAe$-module. Since $e\soc(Ae_{\nu(i)})$ is a non-zero simple $eAe$-module, we have $e\soc(Ae_{\nu(i)})\cong \soc(eAe_{\nu(i)})$.  Since $\top(Ae_j)\cong$ $\soc(Ae_{\nu(j)})$ for all $j$, 
we have $\top(eAe_i)\cong \soc(eAe_{\nu(i)})$  for all  $e_i\in E$.
\end{proof}

The following theorem is one  of our main results.

\begin{Them}\label{new-triangulated-category-2}
Let  $A$ be a  self-injective $k$-algebra and  let $\mathcal{S}=\{S_1,\cdots\!,\, S_n\}$ be the set of simple modules on $A$-$\m$. Let $\mathcal{S'}=\{S_1,\cdots\!,\, S_m\}$ be a Nakayama-stable subset of  $\mathcal{S}$.
Take $$\mathcal{D}={^{\bot}(\mathcal{S'})^{\bot}}=\{Z\in A{\text-}\stmod\, |\, \StHom_A(S,Z)=0, \    \StHom_A(Z,S)=0, S\in\mathcal{S'} \}.$$ Then  $\mathcal{D}$ is a triangulated category {\rm(}with a new structure{\rm)}, moreover, $\mathcal{D}$ and the stable module category  $eAe$-$\stmod$ of $eAe$ are equivalent as triangulated categories, where $e=e_{m+1}+\cdots+e_{n}$, $m<n$, $e_{i}$ is the primitive idempotent element corresponding to simple $A$-module $S_i$.	
\end{Them}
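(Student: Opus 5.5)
The plan has two parts: first get the triangulated structure on $\mathcal{D}$ from the Coelho Sim\~oes--Pauksztello reduction, then build a functor between $eAe$-$\stmod$ and $\mathcal{D}$ from the idempotent functors of Proposition \ref{restriction-embedding}.

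\textbf{Triangulated structure on $\mathcal{D}$.} The set $\mathcal{S'}$ of simple modules is a subset of the simple-minded system $\mathcal{S}$ of $A$-$\stmod$. Hence it is an orthogonal system, i.e.\ $1$-orthogonal. By Theorem \ref{subset-of-sms}, $\mathcal{F}(\mathcal{S'})$ is functorially finite. Nakayama-stability is exactly the $\mathbb{S}_{-1}$-condition, by Remark \ref{CY-sms}(2). So Theorem \ref{new-triangulated-category-1-1} with $w=1$ makes $(\mathcal{D},[1]',\varepsilon')$ triangulated. Separately, Corollary \ref{eAe-self-injective} shows that $B=eAe$ is self-injective, so $B$-$\stmod$ is triangulated.

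\textbf{Describing $\mathcal{D}$ module-theoretically.} For a module $Z$ without projective summands, I would first show that $Z\in\mathcal{D}$ if and only if $\soc Z$ and $\top Z$ have no composition factors in $\mathcal{S'}$. A map $S\to Z$ from a simple non-projective module factors through a projective only if it lands in the socle of a projective summand. So for $Z$ with no projective summands, $\StHom(S,Z)=\Hom(S,Z)$, and dually $\StHom(Z,S)=\Hom(Z,S)$.

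Such modules are precisely those with a presentation $P_1\to P_0\to Z\to 0$, $P_i\in\add(Ae)$, and a copresentation $0\to Z\to I_0\to I_1$, $I_i\in\add(Ae)$. Here I use that $\add(Ae)$ is closed under $\nu$ and is therefore also the class of injectives with socle outside $\mathcal{S'}$. This matches the standard description of the essential image of the intermediate extension $\mathsf{L}_{!*}=\I(Ae\otimes_B-\to\Le)$. The functors $Ae\otimes_B-$ and $\Le$ send $B$-projectives to $\add(Ae)$, which is projective-injective.

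\textbf{The equivalence.} I would define $G\colon B$-$\stmod\to\mathcal{D}$ on modules by $Y\mapsto \mathsf{L}_{!*}(Y)$ and show it induces a functor on stable categories. The key input is that $\Res_e$ restricted to $\mathcal{D}$ is fully faithful with $\Res_e\mathsf{L}_{!*}\cong 1$. Essential surjectivity on $\mathcal{D}$ then follows from the socle/top description above. Full faithfulness at the stable level is checked using that a map in $\mathcal{D}$ factors through a projective if and only if it factors through $\add(Ae)$, and that $e$ sends $\add(Ae)$ to $B$-projectives.

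\textbf{Compatibility with triangles (main obstacle).} It remains to check that $\Res_e\colon\mathcal{D}\to B$-$\stmod$ commutes with the shifts and sends standard triangles to triangles. For $X\in\mathcal{D}$, the triangle $S_X\to X[1]\to X[1]'\to$ has $S_X\in\mathcal{F}(\mathcal{S'})$, i.e.\ a module with all composition factors in $\mathcal{S'}$. Hence $eS_X=0$ and $e(X[1])\cong e(X[1]')$ in $B$-$\stmod$. Moreover, applying $e$ to $0\to X\to I(X)\to \Omega^{-1}X\to 0$ with $I(X)\in\add(Ae)$ gives $e(X[1])\cong\Omega_B^{-1}(eX)$. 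Standard triangles built from the diagram $(\Diamond)$ map under the exact functor $e$ to short exact sequences modulo $e$-killed terms, and so to triangles in $B$-$\stmod$. The delicate point is verifying that these isomorphisms are natural and that the connecting morphisms agree up to sign. I expect this to be the hardest step, and I would handle it by working directly with the short exact sequences realizing each triangle.
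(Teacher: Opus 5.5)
Your outline agrees with the paper in three places. The triangulated structure on $\mathcal{D}$ comes from Theorem \ref{new-triangulated-category-1-1} with $w=1$; you also check its functorial-finiteness hypothesis via Theorem \ref{subset-of-sms}, which the paper leaves implicit. $B=eAe$ is self-injective by Corollary \ref{eAe-self-injective}. Your compatibility check with $[1]'$ and with standard triangles is the paper's Step one, built on the same sequences $0\to s_X\to\Sigma X\to X[1]'\to 0$ and $0\to X\to I(X)\to\Sigma X\to 0$ with $I(X)\in\add(Ae)$.

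The real difference is how you get the additive equivalence. The paper proceeds in three separate steps:
\begin{itemize}
\item density, by approximating $\underline{L_e}(X)$ first with respect to $(\mathcal{F}(\mathcal{S'}),\mathcal{S'}^{\perp})$ and then with respect to $({}^{\perp}\mathcal{S'},\mathcal{F}(\mathcal{S'}))$;
\item fullness, via a claim that objects of $\mathcal{D}$ are determined by their restriction to $e$, argued by comparing tops and socles;
\item faithfulness, via Rickard's criterion, which needs the triangle-functor property first.
\end{itemize}
You instead identify $\mathcal{D}$ with the image of $\mathsf{L}_{!*}$. On that image $\Res_e$ is an equivalence onto $B$-$\m$ matching $\add(Ae)$ with $\add(B)$, and you then descend to stable categories. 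This is cleaner. It proves the paper's claim outright ($Z\cong\mathsf{L}_{!*}(eZ)$ for $Z\in\mathcal{D}$), avoids the torsion-pair approximations, and gives full faithfulness independently of the triangulated structures.

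One statement must be corrected. Modules with a presentation $P_1\to P_0\to Z\to 0$ and a copresentation $0\to Z\to I_0\to I_1$, all terms in $\add(Ae)$, form $\I(Ae\otimes_B-)\cap\I(\Le)$. This class is in general strictly smaller than $\I(\mathsf{L}_{!*})$. For a counterexample, in Example \ref{BGA-reduction} take $\mathcal{S'}=\{S_2\}$ (allowed, since $A$ is symmetric) and $e=1-e_2$. Then $S_1\in\mathcal{D}$, yet $Ae\otimes_B eS_1\cong Ae_1/Ae\,\mathrm{rad}(A)e_1\cong\begin{smallmatrix}1\\2\end{smallmatrix}$, so $S_1$ has no presentation by $\add(Ae)$.

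The correct description constrains only $P_0$ and $I_0$. $\I(\mathsf{L}_{!*})$ consists of the modules with no nonzero submodule or quotient killed by $e$, i.e. with top and socle in $\add(\mathcal{S}\setminus\mathcal{S'})$. For such $Z$ the counit $Ae\otimes_B eZ\to Z$ is onto and the unit $Z\to\Le(eZ)$ is injective.

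Your ``key input'' (full faithfulness of $\Res_e$ on $\mathcal{D}$) must therefore be proved from these two conditions, not from the adjunction for $Ae\otimes_B-$:
\begin{itemize}
\item Faithfulness: if $ef=0$, then $\I(f)$ is a submodule killed by $e$, hence $0$.
\item Fullness: a map $eZ\to eW$ corresponds to a map $Z\to\Le(eW)$. This factors through $W$, because $\Le(eW)/W$ is killed by $e$ and $Z$ has no nonzero quotient killed by $e$.
\end{itemize}

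Finally, make ``modulo $e$-killed terms'' precise in the triangle step. $\Res_e$ does not descend to $A$-$\stmod$, because a projective outside $\add(Ae)$ becomes a non-projective $B$-module. So you must fix representatives, as the paper does:
\begin{itemize}
\item take $s_X\hookrightarrow\Sigma X$ to be the largest submodule of $\Sigma X$ with all composition factors in $\mathcal{S'}$, which is a right minimal $\mathcal{F}(\mathcal{S'})$-approximation;
\item check that the projective summand $P_1$ in $0\to X\to Y\oplus P_1\to Z_f\to 0$ lies in $\add(Ae)$.
\end{itemize}
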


\begin{proof} 
We fix one notation as follows. For an object $X$	in $A$-$\m$, we have a unique (up to isomorphism) decomposition $X=X_{\mathcal{P}}\oplus X'$, where $X_{\mathcal{P}}$ has no nonzero projective summands and $X'$ is projective.
When we consider  $\underline{X}$ in $A$-$\stmod$, if there is no confusion, we still denote by $X$  the non-projective part $X_{\mathcal{P}}$. 
	
Note that, if  $A$ is a self-injective algebra, the  functor $\mathbb{S}_{-1}$ is  Nakayama functor $\D(A)\otimes_{A}-$. 
Since   $\mathcal{S'}$ is Nakayama-stable,  $\mathcal{S\backslash S'}$ is also Nakayama-stable.  Therefore  by Corollary \ref{eAe-self-injective},  $B:=eAe$ is a self-injective algebra, and by Theorem \ref{new-triangulated-category-1-1}, $\mathcal{D}$ is a triangulated category. The reader may refer to previous section for the triangulated  structure of $\mathcal{D}$.
	
We define a functor as follows. 
\begin{align*}
F\colon &\mathcal{D}\longrightarrow B{\text-}\stmod\\
&X\mapsto \left\{\begin{array}{ll} 0 & ( \ X\ is\ projective\ in\  A{\text-}\m); \\
eX & (\ X\ has\ no\ nonzero\ projective\ summands\ in\  A{\text-}\m).\end{array}\right.
\end{align*}

Let  $\StHom_A(X,Y)=\Hom_A(X,Y)/\mathcal{P}_{A}(X,Y)$ (resp.   $\StHom_B(M,N)=\Hom_B(M,N)/\mathcal{P}_{B}(M,N)$),  where  $\mathcal{P}_{A}(X,Y)$(resp.  $\mathcal{P}_{B}(M,N)$) is the subgroup of $\Hom_A(X,Y)$ (resp. $\Hom_B(M,N)$) factoring through a projective module.  We define the action of $F$ on morphism as follows.
For a given morphism $\underline{f}:X\rightarrow Y$ in $\mathcal{D}$.
If $X$ or  $Y$ is isomorphic to  zero in $\mathcal{D}$, then  $FX$ or  $FY$ is   isomorphic to  zero in $B$-$\stmod$. In this case we define $F(\underline{f})=0$. 
If both $X$ and $Y$ are not isomorphic to zero in $\mathcal{D}$. We know that every morphism in  $\mathcal{P}_{A}(X,Y)$ factors through projective cover $P(Y)$ of $Y$. Since $Y$ is in $\mathcal{D}$, $\top(Y)\in \add(\mathcal{S\backslash S'})$. Thus $P(Y)$ is in $\add(Ae)$ and $eP(Y)\in\add(eAe)$ is a  projective $B$-module, that is,  e$\mathcal{P}_{A}(X,Y)\subseteq\mathcal{P}_{B}(eX,eY)$ in $B$-$\m$. Hence for a morphism $\underline{f}=f+\mathcal{P}_{A}(X,Y):X\rightarrow Y$ in $\mathcal{D}$, we can define $F(\underline{f})$ to be  $\underline{ef}=ef+\mathcal{P}_{B}(eX,eY)$. Thus we know that $F(\underline{f})$ is well-defined and  $F$ is a functor.

\noindent {\bf Step one:} $F$ is a triangulated functor. We need to prove that there is a  functorially isomorphism $\varphi\colon F[1]'\cong [1]F$, and for any triangle  $\varepsilon$ in $\mathcal{D}$,  $F(\varepsilon)$ is still a triangle in $B$-$\stmod$, where $[1]'$ (resp. $[1]$) is the shift functor on $\mathcal{D}$ (resp. $B$-$\stmod$). 

First we prove $F[1]'\cong [1]F$.  If  $X$ is isomorphic to zero in $A$-$\stmod$, that is,  $X$ is a projective  $A$-module, then we have  $F[1]'(X)\cong [1]F(X)\cong 0$.  Given a non-zero $\underline{X}\in\mathcal{D}$, by  the definition of $[1]'$ (cf. \cite[Lemma 3.6]{CP}) in page 6, there is a triangle  as follows in $A$-$\stmod$:
\begin{equation}\label{ses-0}
\xymatrix{ \underline{s_X} \ar[r]^-{f_X} &  \underline{\Sigma X} \ar[r]^-{g} & \underline{X[1]'}\ar[r]^-{h} &  \underline{\Sigma s_X},}
	\end{equation}
where $f_X$ is a right minimal  $\mathcal{F(S')}$-approximation of $\underline{\Sigma X}$. Since $\mathcal{S'}$ is a set of simple module, each composition factor of a non-projective  module of $\mathcal{F(S')}$ in $A$-$\m$ is contained in $\mathcal{S'}$. Therefore each composition factor of $s_{X}$ is in $\mathcal{S'}$. 
Let \[\mathcal{X}:=\{M\mid  M\  \text{is a maxiaml indecomposable submodule of}\  \Sigma X\  \text{which is in}\ \mathcal{F(S')}\}.\] Take $s'_{X}=\bigoplus_{M\in\mathcal{X}}M$. Note that $s'_{X}$ is  a submodule of $\Sigma X$ in $A$-$\m$. It is routine to check that $s'_{X}\rightarrow\Sigma X$ in $A$-$\stmod$ induced by injective map is a right minimal $\mathcal{F(S')}$-approximation of $\Sigma X$. Therefore $s_{X}$ is isomorphic to $s'_{X}$ in $A$-$\stmod$ and $s_{X}$ may be chosen as a submodule of $\Sigma X$ in $A$-$\m$. Thus there is an injective map $f:s_{X}\hookrightarrow\Sigma X$ in $A$-$\m$. 
 Therefore there is an injective map $f:s_{X}\hookrightarrow\Sigma X$ in $A$-$\m$. Note that left minimal   $\mathcal{F(S')}$-approximation may be chosen dually. Hence there is a short exact sequence in $A$-$\m$ corresponding to triangle (\ref{ses-0}) as follows in $A$-$\stmod$:
\begin{equation}\label{ses-1} 
\xymatrix{0 \ar[r] & s_X \ar[r]^-{f} & \Sigma X \ar[r]^-{g} & X[1]' \ar[r] & 0.}
\end{equation} 
Applying functor $e(-)$ to (\ref{ses-1}), we have a short exact sequence in $B$-$\m$: 	
	$$\xymatrix{0 \ar[r] & e\Sigma X \ar[r]^-{eg} & eX[1]' \ar[r] & 0,}$$
therefore $e(\Sigma X) \cong e(X[1]')$ in $B$-$\m$, thus $\underline{e(\Sigma X)}\cong F(X[1]').$
	
Consider the following short exact sequence in $A$-$\m$:
\begin{equation}\label{ses-2} 
\xymatrix{0 \ar[r] & X \ar[r]^-{i} & I\ar[r]^-{j} & \Sigma X\ar[r] & 0,}
\end{equation}
where $i$ is the injective envelope of  $X$ and $I\in\add(Ae)$.  Applying  functor $e(-)$ to (\ref{ses-2}) in  $A$-$\m$, we have the following short exact sequence in $B$-$\m$: $$\xymatrix{0 \ar[r] & eX \ar[r]^-{ei} & eI\ar[r]^-{ej} & e\Sigma X\ar[r] & 0,}$$ 
It is easy to see that  $ei$ is  the injective envelope of $eX$ and  $\underline{e(\Sigma X)}\cong [1](\underline{eX})\cong  [1](FX)$. 
Thus  $F[1]'(X)\cong [1]F(X)$. It is routine to check naturality of $\varphi$, we omit the proof. Hence  $F[1]'\cong [1]F$ is a natural isomorphism.

Given a triangle  $$\varepsilon\colon\xymatrix{\underline{X} \ar[r]^-{\alpha} &  \underline{Y}\ar[r]^-{\beta} & \underline{Z}\ar[r]^-{\gamma} & \underline{X[1]'}}$$ in $\mathcal{D}$, we prove  $$F(\varepsilon)\colon\xymatrix{F(\underline{X}) \ar[r]^-{F\alpha} &  F(\underline{Y}) \ar[r]^-{F\beta} & F(\underline{Z})\ar[r]^-{F\gamma} & F(\underline{X}[1]') }$$ is a triangle  in $B$-$\stmod$. If  one of $\underline{X},\underline{Y}$ or $\underline{Z}$ is isomorphic to zero,  then it is clear that $F(\varepsilon)$ is a triangle. Therefore we may assume that none of   $\underline{X},\underline{Y}$ or $\underline{Z}$ are isomorphic to zero in  $A$-$\stmod$.  By the construction of distinguished  triangles in  $\mathcal{D}$, we have the following commutative diagram as follows.  
$$\xymatrix{
&   & \underline{s_f}\ar[d]_-{\alpha} \ar[r]^-{\sigma} & \underline{s_X} \ar[d]^-{\alpha_X}  \\
\underline{X} \ar[r]^-{f} & \underline{Y}\ar[dr]_-{g} \ar[r]^-{g_1} &\underline{Z_f} \ar[d]_-{\beta} \ar[r]^-{h_1} &  =\underline{\Sigma X}\ar[d]^-{\beta_X} & (\bigstar)\\
&   & \underline{Z} \ar[d]_-{\gamma} \ar[r]^-{h} & \underline{X}[1]'\ar[d]^-{\gamma_X}  \\	
& &  \underline{\Sigma s_f} \ar[r]_-{\Sigma\sigma} &  \underline{\Sigma s_X},\\
& & (\blacktriangle)}$$
where $\alpha$ (resp. $\alpha_X$) is a right minimal  $\mathcal{F(S')}$-approximation of $\underline{Z_f}$ (resp. $\Sigma X$). Since $(\bigstar)$ and $(\blacktriangle)$ are triangles in  $A$-$\stmod$, there are short exact sequences in  $A$-$\m$: 
\[\xymatrix{0 \ar[r] & X \ar[r]^-{} & Y\oplus P_1 \ar[r]^-{} & Z_f \ar[r] & 0}\]
	and 
\[ \xymatrix{0 \ar[r] & s_f \ar[r]^-{} & Z_f \ar[r]^-{} & Z \ar[r] & 0.}\]
Let $X\rightarrow I$ be the injective envelop of $X$. Then $\soc(X)\cong\soc(I)$ and  $P_{1}\in\add(I)$. Thus   $\soc(P_{1})\in\add(\soc(I))$. Since $X$ is in $\mathcal{D}$, any direct summand of  $\soc(X)$ is not in  $\mathcal{F(S')}$.  Since $\soc(I)\cong\soc(X)$, any direct summand of  $\soc(I)$ is not in  $\mathcal{F(S')}$. Therefore any direct summand of  $\soc(P_{1})$ is not in  $\mathcal{F(S')}$.      Since $S'$ is Nakayama-stable and $\top(P_{1})\cong\nu(\soc(P_{1}))$,  any direct summand of $\top(P_{1})$ is not in $\mathcal{F(S')}$. Hence $P_{1} \in \add(Ae)$. 
Applying functor  $e(-)$ to the above short exact sequences, we have  short exact sequences  in  $B$-$\m$:
$$\xymatrix{0 \ar[r] & eX \ar[r]^-{} & eY\oplus eP_1 \ar[r]^-{} & eZ_f \ar[r] & 0}$$
	and 
$$  \xymatrix{ 0\ar[r]^-{} & eZ_f\ar[r]^-{} & eZ \ar[r] & 0.}$$
where $eP_1\in\add(B)$. Thus it is easy to see that  $$\xymatrix{ F(\underline{X})\ar[r]^-{} & F(\underline{Y}) \ar[r]^-{} & F(\underline{Z})\ar[r]^-{} &  F(\underline{X})[1]'}$$ is a triangle in $B$-$\stmod$.

\noindent {\bf Step two:} $F$ is dense. We need to prove that for any non-zero $\underline{X}\in B$-$\stmod$, there is a $\underline{X'}\in\mathcal{D}$ such that $F(\underline{X'})\cong \underline{X}$. By Theorem  \ref{subset-of-sms}, there are two torsion pairs: $(^{\perp}\mathcal{S'}, \mathcal{F(S')})$ and $(\mathcal{F(S')},\mathcal{S'}^{\perp})$. Consider stable idempotent embedding functor $\underline{L_e}=\StHom_B(eA,-)\colon$ $B$-$\stmod\rightarrow A$-$\stmod$. Note that the composition $eL_e$ of idempotent embedding functor $L_e=\Hom_B(eA,-)\colon$ $B$-$\m\rightarrow A$-$\m$ and  restriction functor $eA\otimes_A-\colon$$A$-$\m\rightarrow B$-$\m$  is isomorphic to  $\Id_{B\text{-}\m}$.  Since $(\mathcal{F(S')},\mathcal{S'}^{\perp})$ is a torsion pair, there is a triangle in $A$-$\stmod$ as follows:
\begin{equation}\label{ses-3}
\xymatrix{  \underline{M}\ar[r]^-{\alpha_M} &  \underline{L_e}(\underline{X}) \ar[r]^-{} & \underline{X_1}\ar[r]^-{} &  \underline{M}[1],}
\end{equation}
where  $\alpha_M$ is a right minimal $\mathcal{F(S')}$-approximation of $\underline{L_e}(\underline{X})$  and  $\underline{X_1}\in  {\mathcal{S'}^{\perp}}$. By the construction of $\alpha_M$, we know that $\soc(X_{1})$ is not in $\mathcal{F(S')}$ in $A$-$\m$. Since $M$ is in $\mathcal{F(S')}$, each composition factor of $M$ is in $\mathcal{S'}$. Hence  $\Hom_A(M,X_1)=0$. Applying functor $e(-)$  to the short exact sequence in $A$-$\m$ corresponding to triangle  (\ref{ses-3}), we have 
\begin{equation}\label{ses-4-0}
X\cong eL_e(X)\cong e(X_1),
\end{equation}
where the first isomorphism  due to $eL_e\cong \text{id}_{B{\text-}\stmod}$.  Therefore $F(\underline{X})\cong F(\underline{X_{1}})$ in $B$-$\stmod$. Since  $(^{\perp}\mathcal{S'}, \mathcal{F(S')})$ is a torsion pair, there is a triangle as follows:
\begin{equation}\label{ses-4}
\xymatrix{  \underline{X'}\ar[r]^-{} &  \underline{X_1} \ar[r]^-{\beta_{M'}} & \underline{M'}\ar[r]^-{} &  \underline{X'}[1],}
\end{equation}
where $\beta_{M'}$ is a left minimal $\mathcal{F(S')}$-approximation of $\underline{X_1}$  and $\underline{X'}\in {^{\perp}\mathcal{S'}}$. 
Hence there exists a short exact sequence in $A$-$\m$ as follows:
\begin{equation}\label{ses-5}
\xymatrix{0 \ar[r] & X' \ar[r]^-{} & X_1\ar[r]^-{} & M' \ar[r] & 0.}
\end{equation}
	
We claim that $\underline{X'}\in \mathcal{D}$, $F(\underline{X'})\cong \underline{X}$. Since   $X'$ is a submodule of $X_1$ in $A$-$\m$ and for any  $\underline{M}\in \mathcal {F(S')}$, we have  $\Hom_A(M,X_1)=0$, thus $\Hom_A(M,X')=0$, then $\underline{X'}\in {\mathcal{S'}}^{\perp}$. Hence $\underline{X'}\in \mathcal{D}$. By applying functor $e(-)$ to the sequence  (\ref{ses-5}),  we have  $e(X')\cong e(X_1)$. By the sequence (\ref{ses-4-0}), we have  $\underline{X}\cong F(\underline{X_1})\cong F(\underline{X'})$ and  $\underline{X'}\in \mathcal{D}$.
	
\noindent {\bf Step three}: $F$ is full and faithful. 
First we prove that $F$ is full. It suffices to show that for any non-zero  $\underline{X}, \underline{Y}\in\mathcal{D}$, $F\colon \Hom_{\mathcal{D}}(\underline{X},\underline{Y})\rightarrow\StHom_{B}(F(\underline{X}),F(\underline{Y}))$ is an epimorphism.

{\bf Claim}: for any non-zero $\underline{X}, \underline{Y}\in\mathcal{D}$, if  $F(\underline{X})\cong F(\underline{Y})$, then $\underline{X}\cong\underline{Y}$. 
	
Since $\underline{X}, \underline{Y}\in\mathcal{D}$, the $\top(X)$, $\top(Y)$, $\soc(X)$ and $\soc(Y)$ belong to $\mathcal{F}(\mathcal{S}\backslash\mathcal{S'})$. Since $F(\underline{X})\cong F(\underline{Y})$, by the construction of functor $F$, we have $\top(X)\cong\top(Y)$, $\soc(X)\cong\soc(Y)$, and moreover we have  $X\cong Y$ in $A$-$\m$.  Thus $\underline{X}\cong\underline{Y}$ in $\mathcal{D}$. Note that this claim does not hold without the condition $\underline{X}, \underline{Y}\in\mathcal{D}$.

Take $f\in \StHom_{B}(F(\underline{X}),F(\underline{Y}))$. Since $(\mathcal{F(S')},\mathcal{S'}^{\perp})$ is a torsion pair, we have the following commutative diagram in $A$-$\stmod$:
\begin{equation}\label{com-diagram-1}\xymatrix{
M_{F(\underline{X})} \ar[d]_-{\alpha_f} \ar[r]^-{\alpha_{F(\underline{X})}} & \underline{L}_e(F(\underline{X})) \ar[d]_-{\underline{L}_e(f)} \ar[r]^-{g} & \underline{X_1}\ar[d]^-{\gamma} \ar[r] & \Sigma M_{F(\underline{X})} \ar[d]^-{\Sigma \alpha_f}\\
M_{F(\underline{Y})} \ar[r]_-{\alpha_{F(\underline{Y})}} & \underline{L}_e(F(\underline{Y}))  \ar[r]_-{g'} & \underline{Y_1} \ar[r] & \Sigma M_{F(\underline{Y})}\ .}
\end{equation}
Since $\alpha_{F(\underline{X})}$ is a right minimal  $\mathcal{F(S')}$-approximation,  $\alpha_f$ exists. Since $(^{\perp}\mathcal{S'}, \mathcal{F(S')})$ is a torsion pair, there is a commutative diagram as follows: 
\begin{equation}\label{com-diagram-2}
\xymatrix{
\underline{X'} \ar[d]_-{\sigma} \ar[r]^-{} & \underline{X_1}\ar[d]_-{\gamma} \ar[r]^-{\beta_{\underline{X_1}}} & M_{\underline{X_1}}\ar[d]^-{\delta} \ar[r] & \Sigma \underline{X'} \ar[d]^-{\Sigma \sigma}\\
\underline{Y'} \ar[r]_-{} & \underline{Y_1} \ar[r]_-{\beta_{\underline{Y_1}}} & M_{\underline{Y_1}} \ar[r] & \Sigma  \underline{Y'}\ .}
\end{equation}
Both $\underline{X'}$ and $\underline{Y'}$ are in $\mathcal{D}$.  Applying functor $F$  to the  triangles (\ref{com-diagram-1}) and  (\ref{com-diagram-2}), we have the following commutative diagrams:
\begin{equation}
\xymatrix{
			F(\underline{X})\ar[d]_-{f} \ar[r]^-{\cong}& \underline{eL_e}(F(\underline{X}))\ar[d]_-{	eL_e(f)} \ar[r]^-{\cong} & F(\underline{X_{1}})\ar[d]^-{e\gamma}  \\
			F(\underline{Y})\ar[r]_-{\cong}	& \underline{eL_e}(F(\underline{Y})) \ar[r]_-{\cong} & F(\underline{Y_1})\ ,
		} \ \ \ \ \ \ \ \ \ \ \ \ 
		\xymatrix{
			F(\underline{X'})\ar[d]_-{e\sigma} \ar[r]^-{\cong} & F(\underline{X_{1}})\ar[d]^-{e\gamma}  \\
		F(\underline{Y'})\ar[r]_-{\cong} & F(\underline{Y_{1}}) \ .}
\end{equation}
Therefore $\StHom_{B}(F(\underline{X'}),F(\underline{Y'}))\cong\StHom_{B}(F(\underline{X}),F(\underline{Y}))$. Since both $X$ and $X'$ (resp. $Y$ and  $Y'$) are in $\mathcal{D}$, and  $F(\underline{X})\cong F(\underline{X}')$ (resp. $F(\underline{Y})\cong F(\underline{Y'})$), we have  $\underline{X}\cong \underline{X'}$ (resp. $\underline{Y}\cong \underline{Y'}$) in $\mathcal{D}$. Hence for morphism $f\in\Hom_{B}(F(\underline{X}),F(\underline{Y}))$, there is a morphism $f'\colon \underline{X}\rightarrow \underline{Y}$ such that  $ef'=f$ up to isomorphism.
	
Finally we prove that $F$ is faithful.  Since  $F$ is full, by \cite[p. 446]{Ric}, it suffices to show that for any non-zero object $\underline{X}$ in $\mathcal{D}$, $F(\underline{X})$ is non-zero. Since  $\top(X)$ is not a projective $A$-module, $\underline{\top(X)}$ is not in $\mathcal{F}(\mathcal{S'})$, we know that $F(\underline{\top(X)})$ is non-zero in $B$-$\m$. Thus  $F(\underline{X})$ is non-zero in  $B$-$\stmod$.
\end{proof}

\begin{Rem}
We recall some facts on restriction functor $e(-)=eA\otimes_{A}-$. Restriction functor is an exact functor, however it does not preserve projective modules. Therefore restriction functor is not  induced to be the standard stable functor.  $e(\add(Ae))\in\add(B)$ is a projective $B$-module, $S'_i:=e(S_i)$ is the simple $B$-module corresponding to the primitive idempotent element $e_{i}$, where  $i=m+1,\cdots\!,\, n$.  We have  $e(M)=0$ for any  $M\in\mathcal{F(S')}$. 
\end{Rem}

\begin{Cor}\label{new-triangulated-category-3}
	Let  $A$ be a symmetric $k$-algebra and  let $\mathcal{S}=\{S_1,\cdots\!,\, S_n\}$ be the set of simple modules on $A$-$\m$. Let $\mathcal{S'}$ be a subset of   $\mathcal{S}$.
	Take $$\mathcal{D}={^{\bot}(\mathcal{S'})^{\bot}}=\{Z\in A{\text-}\stmod\, |\, \StHom_A(S,Z)=0, \    \StHom_A(Z,S)=0, S\in\mathcal{S'} \}.$$ Then  $\mathcal{D}$ is a triangulated category {\rm(}with a new structure{\rm)}, moreover, $\mathcal{D}$ and the stable module category  $eAe$-$\stmod$ of $eAe$ are equivalent as triangulated categories, where $e=\Sigma_{e_{i}\in\mathcal{S}'}e_{i}$, where $e_{i}$ is the primitive idempotent element corresponding to simple $A$-module $S_i$.	
\end{Cor}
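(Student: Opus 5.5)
The plan is to deduce the corollary directly from Theorem \ref{new-triangulated-category-2}, by checking that its only extra hypothesis, Nakayama-stability of $\mathcal{S}'$, holds automatically for every subset of simples when $A$ is symmetric. First I recall that $A$ symmetric means $\D(A)\cong A$ as $A$-$A$-bimodules. Hence the Nakayama functor $\nu=\D(A)\otimes_A-$ is isomorphic to the identity functor on $A$-$\m$, and in particular $\nu(S_i)\cong S_i$ for every simple $A$-module. Equivalently, in the language of Theorem \ref{self-injective-algebra}, the Nakayama permutation is the identity: $\top(Ae_i)\cong\soc(Ae_i)$ for all $i$. Therefore any subset $\mathcal{S}'\subseteq\mathcal{S}$ satisfies $\nu(\mathcal{S}')=\mathcal{S}'$. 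This is also consistent with Remark \ref{CY-sms}(2), where every subcategory of $A$-$\stmod$ is an $\mathbb{S}_{-1}$-subcategory in the symmetric case.

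Second, I reindex the simples so that $\mathcal{S}'=\{S_1,\dots,S_m\}$ and then apply Theorem \ref{new-triangulated-category-2} verbatim. This shows that $\mathcal{D}={^{\bot}(\mathcal{S}')^{\bot}}$ carries the Coelho Sim\~oes--Pauksztello triangulated structure of Theorem \ref{new-triangulated-category-1-1}, and that the functor $F=e(-)$ constructed in that proof is a triangle equivalence $\mathcal{D}\simeq eAe$-$\stmod$. As an additional remark I note that, by Lemma \ref{symmetric-algebra}, $eAe$ is even symmetric, not merely self-injective as Corollary \ref{eAe-self-injective} would give.

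The main point requiring care is the choice of idempotent. In Theorem \ref{new-triangulated-category-2} the idempotent is $e=\sum_{S_i\notin\mathcal{S}'}e_i$, the sum over the \emph{complement} of $\mathcal{S}'$, since the restriction functor $e(-)$ must kill $\mathcal{F}(\mathcal{S}')$. So I would read the idempotent in the corollary as the sum of the $e_i$ with $S_i\in\mathcal{S}\setminus\mathcal{S}'$, and I would state this explicitly. I would also handle the degenerate case $\mathcal{S}'=\mathcal{S}$, which the theorem excludes by the assumption $m<n$. In that case $\mathcal{D}=0$: a nonzero non-projective module has a nonzero stable map from some simple in its socle, because the inclusion of a simple into a module with no projective summands does not factor through a projective. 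This zero category matches $0$-$\stmod$ with $e=0$, so the statement holds trivially. Beyond this bookkeeping, no new argument is needed.
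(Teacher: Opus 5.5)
Your proposal is correct and is the intended derivation. The paper gives no separate proof: the corollary is Theorem~\ref{new-triangulated-category-2} specialized via the fact that the Nakayama functor of a symmetric algebra is isomorphic to the identity, so every $\mathcal{S}'$ is Nakayama-stable, with Lemma~\ref{symmetric-algebra} additionally giving that $eAe$ is symmetric. You are right to read $e$ as the sum of the $e_i$ with $S_i\notin\mathcal{S}'$; the printed $e=\Sigma_{e_i\in\mathcal{S}'}e_i$ is a slip, as Theorem~\ref{new-triangulated-category-2} and Example~\ref{BGA-reduction} confirm, and your separate treatment of the case $\mathcal{S}'=\mathcal{S}$ is a harmless addition.
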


\begin{Ex}\label{BGA-reduction}
We present an explicit  example for Theorem \ref{new-triangulated-category-2}. Consider Brauer graph algebra $A$ with Brauer graph given by
\begin{center}
	\vspace{.5cm}
	\setlength{\unitlength}{0.8cm}
	\begin{picture}(6,5)
		\thicklines
		\put(-0.5,3.5){\line(1,1){1.5}}
		\put(-0.5,3.5){\line(1,0){1.4}}
		\put(-0.5,3.5){\line(1,-1){1.5}}
		\put(1,5){\line(1,-1){1.5}}
		\put(2.5,3.5){\line(1,0){1.4}} 
		\put(2.5,3.5){\line(-1,-1){1.5}}
		\put(0.25,3.6){1}
		\put(0.05,4.35){2}
		\put(-0.1,2.4){3}
		\put(1.8,2.5){4}
		\put(1.75,4.3){5}
		\put(3,3.6){6}
	\end{picture}.
\end{center}
\noindent Then $A=~~\begin{matrix}1\\2\\3\\1\end{matrix}
~~\oplus~~\begin{matrix}2\\\begin{matrix}3\\1\end{matrix}~~\begin{matrix}5\end{matrix}
	\\2\end{matrix}
~~\oplus ~~\begin{matrix}3\\\begin{matrix}1\\2\end{matrix}~~\begin{matrix}4\end{matrix}
	\\3\end{matrix}
~~\oplus~~\begin{matrix}4\\\begin{matrix}3\end{matrix}~~\begin{matrix}6\\5\end{matrix}\\4
\end{matrix}
~~\oplus~~\begin{matrix}5\\\begin{matrix}2\end{matrix}~~\begin{matrix}4\\6
	\end{matrix}\\5\end{matrix}
~~\oplus~~\begin{matrix}6\\5\\4\\6\end{matrix}.$
The corresponding quiver $Q$ of $A$ is the following diagram:
\[\xymatrix{
&  3\ar[dl]^{} \ar@/^/[r]  & 4\ar@/^/[l]  \ar[dr]^{} &  \\
 	1\ar[r]^{} & 2\ar[u]^{}  \ar@/^/[r] & 5\ar[u]_{}\ar@/^/[l] & 6 \ar[l]^{}.}
\]

\bigskip

The set of simple module is $\mathcal{S}=\{S_{1},S_{2},S_{3},S_{4},S_{5},S_{6}\}$. Take $\mathcal{S}'_{1}=\{S_{1},S_{2},S_{3}\}$. Since $A$ is a symmetric algebra, $\mathcal{S}'_{1}$ is clear a Nakayama-stable subset of $\mathcal{S}$. In this case, 
\[\mathcal{D}_{1}=\{Z\in A{\text-}\stmod\, |\, \StHom_A(\mathcal{S}'_{1},Z)=0, \    \StHom_A(Z,\mathcal{S}'_{1})=0 \}=\{4,~~5,~~6,~~\begin{matrix}4\\5\end{matrix},~~ \begin{matrix}5\\6\end{matrix},~~\begin{matrix}4\\6\end{matrix},~~\begin{matrix}4\\6\\5\end{matrix},~~\begin{matrix}5\\4\\6\end{matrix},~~\begin{matrix}6\\5\\4\end{matrix}\}.\] It is isomorphic to the stable module category of the algebra $eAe=~~\begin{matrix}4\\6\\5\\4\end{matrix}
~~\oplus~~\begin{matrix}5\\4\\6\\5\end{matrix}~~\oplus~~\begin{matrix}6\\5\\4\\6\end{matrix}$, where $e=e_{4}+e_{5}+e_{6}$. The quiver $Q_{1}$ corresponding to the algebra  $eAe$ is as follows {\rm(}we omit the corresponding ideal $I_{1}${\rm)}.
\[\xymatrix{
	  4 \ar[dr]^{} &  \\
 5\ar[u]_{} & 6 \ar[l]^{}.}
\]

Take $\mathcal{S}'_{2}=\{S_{1}\}$. $\mathcal{S}'_{2}$ is also a Nakayama-stable subset of $\mathcal{S}$. In this case, 
\[\mathcal{D}_{2}=\{Z\in A{\text-}\stmod\, |\, \StHom_A(\mathcal{S}'_{2},Z)=0, \    \StHom_A(Z,\mathcal{S}'_{2})=0\}.\] $\mathcal{D}_{2}$ is an infinite set. It is isomorphic to the stable module category of the algebra \[eAe=~~\begin{matrix}2\\\begin{matrix}3\end{matrix}~~\begin{matrix}5\end{matrix}
	\\2\end{matrix}
\oplus ~\begin{matrix}3\\\begin{matrix}2\end{matrix}~~\begin{matrix}4\end{matrix}
	\\3\end{matrix}
\oplus~\begin{matrix}4\\\begin{matrix}3\end{matrix}~~\begin{matrix}6\\5\end{matrix}\\4
\end{matrix}
\oplus~\begin{matrix}5\\\begin{matrix}2\end{matrix}~~\begin{matrix}4\\6
	\end{matrix}\\5\end{matrix}
\oplus~\begin{matrix}6\\5\\4\\6\end{matrix},\] where $e=e_{2}+e_{3}+e_{4}+e_{5}+e_{6}$. The quiver $Q_{2}$ corresponding to the algebra  $eAe$ is as follows {\rm(}we omit the corresponding ideal  $I_{2}${\rm)}.
\[\xymatrix{
	&  3\ar@/^/[r]\ar@/^/[d]  & 4\ar@/^/[l]  \ar[dr]^{} &  \\
	& 2\ar@/^/[u]  \ar@/^/[r] & 5\ar[u]_{}\ar@/^/[l] & 6 \ar[l]^{}.}
\]
\end{Ex}

\begin{Rem}\label{hu-xi-simple}
\begin{enumerate}[$(1)$]
\item $\mathcal{D}$ in Theorem \ref{new-triangulated-category-2} is not a triangulated subcategory of $A$-$\stmod$ and its new triangle structure is given in Section 2.
\item The $\mathcal{D}_{1}$ in Example \ref{BGA-reduction} is the stable module category of the algebra $kQ_{1}/I_{1}$. The form of objects in $\mathcal{D}_{2}$ is different from the objects in stable module category $kQ_{2}/I_{2}$-$\stmod$, however 
$\mathcal{D}_{2}$ with the new triangulated structure is isomorphic to  $kQ_{2}/I_{2}$-$\stmod$ with usual triangulated structure as the stable module category of a self-injective algebra.
\end{enumerate}
\end{Rem}

By configuration theory developed by Riedtmann {\rm(}See for example \cite{Rie1, Rie3}{\rm)}, Hu and Xi proved the following result. 
\begin{Prop}{\rm(\cite[Lemma  5.6]{HX})}\label{RFS-stable-simple} 
 Let $A$ and $B$ be two representation-finite, self-injective $k$-algebras without nonzero semisimple direct summands. Suppose that $\Phi\colon A$-$\stmod\rightarrow B$-$\stmod$ is a stable equivalence of Morita type. Then there are a simple  $A$-module $X$ and integers $r$ and $t$ such that $\tau^{r}\circ\Omega^{t} \circ \Phi(X)$ is isomorphic in $B$-mod to a simple $B$-module, where $\tau$ and $\Omega$ stand for the Auslander–Reiten translation and syzygy functor, respectively.
\end{Prop}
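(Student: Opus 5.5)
The plan is to work entirely with the stable Auslander--Reiten quivers and reduce the statement to a combinatorial fact about Riedtmann configurations. Since $A$ and $B$ are representation-finite, self-injective and have no semisimple summands, Riedtmann's structure theorem gives ${}_s\Gamma_A\cong \mathbb{Z}\Delta/G$ and ${}_s\Gamma_B\cong\mathbb{Z}\Delta'/G'$ with $\Delta,\Delta'$ Dynkin diagrams and $G,G'$ admissible infinite cyclic groups. The stable equivalence $\Phi$ is of Morita type, so it is given by tensoring with a bimodule that is projective on both sides. Hence $\Phi$ is exact on modules and commutes with $\Omega$. It also commutes with the Nakayama functor, and therefore with $\tau\cong\Omega^{2}\nu$.

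\textbf{Step 1.} I first show that $\Phi$ induces an isomorphism of stable translation quivers ${}_s\Gamma_A\to{}_s\Gamma_B$ commuting with $\tau$ and $\Omega$. In particular $\Delta=\Delta'$. Moreover, $\Phi$ maps each $\langle\tau,\Omega\rangle$-orbit of vertices bijectively onto a $\langle\tau,\Omega\rangle$-orbit. On $\mathbb{Z}\Delta$, $\Omega$ acts as a power of $\tau$ composed with a diagram symmetry of $\Delta$. Consequently these orbits are indexed by vertices of $\Delta$ modulo a group of diagram automorphisms, and this indexing is respected by $\Phi$.

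\textbf{Step 2.} I locate the simple modules combinatorially. For a simple $S$, the module $\Omega S=\operatorname{rad}P(S)$ is a vertex of the configuration $\mathcal{C}_A\subseteq\mathbb{Z}\Delta$ that records the positions of the radicals of the indecomposable projectives. So the $\langle\tau,\Omega\rangle$-orbits containing simple $A$-modules are exactly the orbits meeting $\mathcal{C}_A$, and similarly for $B$. The statement is thus equivalent to the following. There exist a simple $A$-module $X$ and a simple $B$-module $Y$ such that the orbit of $\Phi(X)$ equals the orbit of $Y$, that is, the orbit of $\Omega X$ is mapped onto the orbit of $\Omega Y$. Once such $X,Y$ are found, writing $\Phi(X)$ as $\tau^{-r}\Omega^{-t}Y$ gives the required integers $r,t$.

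\textbf{Step 3, the main obstacle.} One must show that the configurations $\mathcal{C}_A$ and $\mathcal{C}_B$, each considered modulo $\langle\tau,\Omega\rangle$, meet a common orbit. I would try Riedtmann's classification of configurations of $\mathbb{Z}\Delta$ for $\Delta=A_n,D_n,E_{6,7,8}$. The aim is to prove that every configuration contains a vertex in a distinguished orbit. For $A_n$ this should be a vertex on the boundary row, that is, in the $\tau$-orbit of an end vertex of $\Delta$. For $D_n$ it should be the orbit of the branch end vertices or of the end of the long arm, to be checked from Riedtmann's description, and for $E_n$ a finite check from the known lists. Since this distinguished orbit is intrinsic to $\mathbb{Z}\Delta$, it is preserved by the quiver isomorphism of Step 1, modulo the diagram-symmetry ambiguity, which I would handle by taking the orbit to be symmetry-invariant. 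It then contains both some $\Omega X$ and some $\Phi(\Omega X')$ with $\Omega Y$ in the same orbit, which completes the argument.

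If the case analysis for $D_n$ proves awkward, the fallback is an orthogonality argument. The simple $B$-modules form a simple-minded system, and by Step 1 the images $\Phi(S)$ of the simple $A$-modules also form one. I would then compare them using the fact that simple-minded systems in $\mathbb{Z}\Delta/G$ are again governed by configuration combinatorics.
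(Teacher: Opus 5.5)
The paper does not prove this statement. It quotes it from Hu--Xi \cite[Lemma 5.6]{HX}, noting only that their argument rests on Riedtmann's configuration theory. That is also your framework, and your Steps 1 and 2 are a correct reduction. $\Phi$ is a triangle equivalence, so it commutes with $\Omega$ and with the Serre functor $\nu\Omega$, hence with $\tau$ on objects. It induces an isomorphism of stable translation quivers, and the simple modules are $\Omega^{-1}$ of configuration vertices.

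The genuine gap is Step 3, which carries the entire content of the lemma and which you only describe as something you would try. You need a proved statement of the form: there is a $\langle\tau,\Omega\rangle$-orbit that every relevant configuration of $\mathbb{Z}\Delta$ meets. For $A_n$ you assert, without proof, that the border works. For $D_n$ you do not decide which orbit. For $E_6,E_7,E_8$ the check against the lists is not carried out. The fallback adds nothing: comparing the simple-minded system formed by the images of the simple $A$-modules with that formed by the simple $B$-modules, via configuration combinatorics, is the same problem restated.

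Your proposed way of removing the diagram-symmetry ambiguity also fails as stated. An $\mathrm{Aut}(\Delta)$-invariant set of rows is in general a union of several $\langle\tau,\Omega\rangle$-orbits of $\mathbb{Z}\Delta/G$. Knowing that both $\mathcal{C}_A$ and $\mathcal{C}_B$ meet such a union gives no common orbit. Moreover, a single invariant orbit met by all configurations need not exist.

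Take the trivial extension of the path algebra of $Q\colon a_1\to c\leftarrow a_2,\ c\to a_3$, of type $D_4$. Here $G=\langle\tau^{5}\rangle$ and $\Omega$ lifts to $\tau^{3}$, so the four rows are four distinct orbits. The only $\mathrm{Aut}(D_4)$-invariant one is the centre row. Yet the simple modules are the images of $S_{a_3}=P_{a_3}$, $S_c=\tau^{-1}P_{a_3}$, $S_{a_1}=I_{a_1}$ and $S_{a_2}=I_{a_2}$, and all of these lie on arm rows.

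The correct reduction is to lift $\Phi$ to an automorphism of $\mathbb{Z}\Delta$ and pull $\mathcal{C}_B$ back along it. Translation-quiver automorphisms preserve the mesh category, so the pull-back is again a $G_A$-stable configuration. It then suffices to find one $\langle\tau,\Omega_A\rangle$-orbit of $\mathbb{Z}\Delta/G_A$ that every such configuration meets, with no invariance required. For $D_4$, for instance, one checks directly that every configuration meets each arm row. Establishing such a statement for every Dynkin type, or otherwise showing that $\mathcal{C}_A$ and the pulled-back $\mathcal{C}_B$ share an orbit, is the missing idea. It is the configuration-theoretic input behind the result the paper cites.
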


From the proof of Hu and  Xi, we replace simple module $X$ by an object of a simple-minded system, the above result also holds.   
\begin{Cor}\label{RFS-stable-simple-sms} 
Let $A$ and $B$ be two representation-finite, self-injective $k$-algebras without nonzero semisimple direct summands and let $\mathcal{S}$ be a simple-minded system in $A$-$\stmod$. Suppose that $\Phi\colon A$-$\stmod\rightarrow B$-$\stmod$ is a stable equivalence of Morita type. Then there are an object  $S\in\mathcal{S}$  and integers $r$ and $t$ such that $\tau^{r}\Omega^{t}\Phi(S)$ is isomorphic in $B$-mod to a simple $B$-module, where $\tau$ and $\Omega$ stand for the Auslander–Reiten translation and syzygy functor, respectively.
\end{Cor}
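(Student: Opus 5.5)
The plan is to transport the Hu–Xi argument behind Proposition \ref{RFS-stable-simple}, checking that the only property of the set of simple $A$-modules it really uses is shared by any simple-minded system $\mathcal{S}$. In the Hu–Xi proof, $A$ and $B$ are representation-finite self-injective algebras without semisimple summands. Hence the stable Auslander–Reiten quivers are of the form $\mathbb{Z}\Delta/G$ with $\Delta$ a Dynkin diagram, by Riedtmann's structure theorem. The stable equivalence of Morita type $\Phi$ commutes with $\Omega$ and $\tau=\Omega^{2}\nu$ (up to the standard identifications), so it induces an isomorphism of stable translation quivers $\Gamma_s(A)\cong\Gamma_s(B)$. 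The simple modules of an algebra of this kind form (up to $\tau$-orbits, via $\Omega$) a \emph{configuration} in Riedtmann's sense. The core of the Hu–Xi argument is combinatorial: the images under $\Phi$ of the $\tau\Omega$-orbit data of simples of $A$ must meet the configuration coming from the simples of $B$, which forces some $\tau^{r}\Omega^{t}\Phi(X)$ to be simple.

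First I will recall from \cite{KL} (and \cite{Dugas}) that, for a representation-finite self-injective algebra, the simple-minded systems of $A$-$\stmod$ correspond exactly to the configurations of $\mathbb{Z}\Delta/G$. Equivalently, $\mathcal{S}$ is a simple-minded system iff $\Omega(\mathcal{S})$, or $\tau\Omega^{-1}$ of it depending on convention, is a Riedtmann configuration. This is the case in which Koenig–Liu show that simple-minded systems are the images of simples under stable equivalences from other self-injective algebras, and that they are $\tau$- and $\Omega$-orbit controlled in the translation quiver. In particular, $\mathcal{S}$ determines a configuration $\mathcal{C}_{\mathcal{S}}$ of $\Gamma_s(A)$, just as the set of simples does.

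Second, apply $\Phi$. Since $\Phi$ is a triangle equivalence commuting with $\tau$ and $\Omega$, $\Phi(\mathcal{S})$ is a simple-minded system in $B$-$\stmod$, and $\Phi(\mathcal{C}_{\mathcal{S}})$ is a configuration of $\Gamma_s(B)\cong\mathbb{Z}\Delta/G$. The simples of $B$ give another configuration $\mathcal{C}_B$. Third, I will rerun the Hu–Xi combinatorial step with $\Phi(\mathcal{C}_{\mathcal{S}})$ in place of the image of the simples-configuration. Their argument uses only that both sets are configurations of the same $\mathbb{Z}\Delta/G$, and that configurations are compared through the action of $\tau$ and $\Omega$. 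The latter acts on $\mathbb{Z}\Delta$ by a translation together with the Nakayama-type automorphism, and the group generated by $\tau$ and $\Omega$ acts on the finite set of configurations up to $\tau$-shift. The conclusion is that some $\tau^{r}\Omega^{t}$ moves a vertex of $\Phi(\mathcal{C}_{\mathcal{S}})$ onto a vertex of $\mathcal{C}_B$. Unwinding the correspondence between configurations and simple-minded systems gives $S\in\mathcal{S}$ and integers $r,t$ with $\tau^{r}\Omega^{t}\Phi(S)$ simple.

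The main obstacle is the third step: making sure the Hu–Xi argument genuinely depends only on the configuration property and not on extra features of simples. Such features include simples lying at mouths of the radical-series pieces, or the relation $\top(P)\cong\nu\,\soc(P)$. I would first reread their proof to isolate exactly which lemma from Riedtmann \cite{Rie1, Rie3} is used, namely that any two configurations of $\mathbb{Z}\Delta$ intersect after applying a power of $\tau$ and $\Omega$. I would then check that the Koenig–Liu identification of simple-minded systems with configurations feeds into that lemma verbatim.
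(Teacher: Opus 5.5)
This is the same route as the paper, whose entire proof is the remark that Hu--Xi's argument survives replacing the simple module $X$ by an object of $\mathcal{S}$. You usefully make explicit what that substitution requires: that $\Omega(\mathcal{S})$ is a Riedtmann configuration of $\Gamma_s(A)$, and that any two configurations of $\Gamma_s(B)$ meet after applying some $\tau^{r}\Omega^{t}$.

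Be aware that this intersection property is the whole content of the statement, and it does not follow formally from Hu--Xi's lemma. For $B=A$ and $\Phi=\mathrm{id}$ it says every simple-minded system has a member in the $\langle\tau,\Omega\rangle$-orbit of a simple module, whereas Hu--Xi's lemma is trivial there. Your remark that $\langle\tau,\Omega\rangle$ acts on a finite set of configurations does not yield it either. It has to be read off from the explicit configuration combinatorics; for example, in type $A_n$ every configuration meets the boundary $\tau$-orbits, which $\Omega$ interchanges. The paper also leaves this check implicit.
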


We provide a new proof for the following result by Theorem \ref{new-triangulated-category-2}.
\begin{Prop}\label{RFS-reduction} 
Let $A$ be a  representation-finite symmetric algebra and $\mathcal{S}$ a simple-minded system in $A$-$\stmod$. Then  the cardinality of $\mathcal{S}$ is the number of non-isomorphic, non-projective simple $A$-modules.
\end{Prop}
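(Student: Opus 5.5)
We argue by induction on the number $n_A$ of non-isomorphic simple $A$-modules, with Theorem \ref{new-triangulated-category-2} (in its symmetric form, Corollary \ref{new-triangulated-category-3}) as the reduction step. Discard semisimple blocks first: they contribute nothing to $A$-$\stmod$, no non-projective simples, and no objects of $\mathcal{S}$. So assume $A$ has no nonzero semisimple direct summand; every simple $A$-module is then non-projective, and the claim becomes $|\mathcal{S}|=n_A$. Since $A$ is symmetric it is self-injective and $\nu$ is the identity on simples. Hence every subset of simples is Nakayama-stable, and $eAe$ is again symmetric by Lemma \ref{symmetric-algebra}. Also, $eAe$ is representation-finite, because $\Res_e$ is dense with $\Res_e\Le\cong 1$.

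The first step is to produce a simple module inside some simple-minded system. Corollary \ref{RFS-stable-simple-sms}, applied to $\Phi=\mathrm{Id}$ (certainly a stable equivalence of Morita type), gives $S\in\mathcal{S}$ and integers $r,t$ with $\tau^{r}\Omega^{t}(S)\cong S_1$ simple. The functor $\tau^r\Omega^t$ is a triangle auto-equivalence of $A$-$\stmod$ (up to sign of the shift), so $\mathcal{S}_0:=\tau^r\Omega^t(\mathcal{S})$ is again a simple-minded system. It has the same cardinality as $\mathcal{S}$ and contains $S_1$.

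Next we reduce. Put $\mathcal{S}'=\{S_1\}$, which is Nakayama-stable, and $\mathcal{D}={}^{\perp}\{S_1\}^{\perp}$. By Theorem \ref{new-triangulated-category-1-1} and Theorem \ref{sms-1-1-corresponding-1} with $w=1$ (using Remark \ref{CY-sms}), $\mathcal{S}_0\setminus\{S_1\}$ is a simple-minded system in $\mathcal{D}$. The hypotheses needed here are that $\{S_1\}$ is $1$-orthogonal, which is clear, and that $\mathcal{F}(S_1)$ is functorially finite, which follows from Theorem \ref{subset-of-sms} since $\{S_1\}\subseteq\mathcal{S}_0$. By Theorem \ref{new-triangulated-category-2}, $\mathcal{D}\simeq eAe$-$\stmod$ as triangulated categories, where $e=1-e_1$. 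Simple-minded systems are defined purely in triangulated terms, so they are preserved under this equivalence. Thus $eAe$-$\stmod$ has a simple-minded system of cardinality $|\mathcal{S}|-1$, and $eAe$ is representation-finite symmetric with $n_A-1$ simples.

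The main obstacle is closing the induction. The algebra $eAe$ may acquire semisimple blocks; this happens when some $e_iAe_i$ becomes $k$, for instance when a local block is cut off. Its non-projective simples are exactly the simples of the non-semisimple part. So the inductive hypothesis has to be stated for the quantity ``number of non-projective simples,'' applied to $eAe$ after discarding its semisimple blocks, and we must check that those blocks match exactly the simples $e_iAe_i$ that become projective. The count then goes wrong only if projective simples appear in $eAe$, that is, if $e_iAe_ie_i$ is one-dimensional. I would try first to avoid this by choosing which simple to reduce along: Corollary \ref{RFS-stable-simple-sms} lets us pick the simple via $\tau$- and $\Omega$-orbits. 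If that fails, we treat those blocks separately, noting that a $1$-dimensional $e_iAe_i$ in a symmetric indecomposable non-simple $A$ forces $P_i$ to be uniserial of a special shape, whose contribution can be checked by hand. The base case, where $A$ is local or has no non-projective simples, reduces to the known statement for representation-finite local symmetric (Nakayama) algebras. There a simple-minded system must be a single brick generating everything, since any two bricks have a nonzero map.
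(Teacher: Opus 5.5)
Your route is the same as the paper's. You use Corollary \ref{RFS-stable-simple-sms} with $\Phi$ the identity and the stable self-equivalence $\tau^{r}\Omega^{t}$ to put a simple module $S_1$ into the simple-minded system. You remove $S_1$ via Theorem \ref{sms-1-1-corresponding-1} and pass to $(1-e_1)A(1-e_1)$-$\stmod$ via Theorem \ref{new-triangulated-category-2}. Your preliminary checks are correct and cover points the paper passes over: discarding semisimple blocks, representation-finiteness of $eAe$ from density of $\Res_e$, and functorial finiteness of $\mathcal{F}(S_1)$ from Theorem \ref{subset-of-sms}.

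The gap is your last paragraph. As written, the induction is never closed. The obstacle you describe does not actually exist, and the remedies you sketch rest on false statements.

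\begin{itemize}
\item For $e_i\le e$ one has $e_i(eAe)e_i=e_iAe_i$, so truncating never changes $e_iAe_i$.
\item Cutting off a local block removes that block and creates no semisimple block.
\item ``Choosing which simple to reduce along'' is unnecessary. Corollary \ref{RFS-stable-simple-sms} gives you no such control anyway.
\item $e_iAe_i\cong k$ is impossible for symmetric $A$ with $P_i=Ae_i$ non-simple. Since $\top P_i\cong S_i\cong\soc P_i$, the composite $P_i\twoheadrightarrow S_i\hookrightarrow P_i$ is a nonzero, non-invertible endomorphism, so $\End_A(P_i)\not\cong k$. Your ``uniserial of a special shape'' case is therefore empty.
\end{itemize}

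The fix is short. If $eAe\,e_i$ were a simple (projective) $eAe$-module, its endomorphism ring, which is $e_iAe_i$, would be $k$. That forces $P_i$ to be simple, which you had already excluded. Hence $eAe$ has no semisimple blocks and exactly $n_A-1$ simples, all non-projective. The inductive hypothesis then applies directly and gives $|\mathcal{S}|-1=n_A-1$. This is exactly the fact the paper states without comment when it says $A'$ has $n-1$ non-projective simples.
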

\begin{proof}
We induct on the number of non-isomorphic, non-projective simple $A$-modules.
First, the local  representation-finite self-injective algebras are precise the local symmetric Nakayama algebras.  We know that the cardinality of a  simple-minded system of a local symmetric Nakayama algebra is one, thus our conclusion holds for $k=1$. Now we assume our conclusion holds for symmetric algebras with $k=n-1$ non-isomorphic,non-projective simple $A$-modules. We prove the case for  $k=n$.

Let  $\mathcal{S}$ be a simple-minded system in $A$-$\stmod$. By Corollary \ref{RFS-stable-simple-sms},   
there are  $S\in\mathcal{S}$ and integers $r,\,t$ such that $\alpha(S):=\tau^{r}\Omega^{t}(S)$  is a simple $A$-module. Note that $\alpha:A$-$\stmod\rightarrow A$-$\stmod$ is a stable self-equivalence. Since  simple-minded systems are invariant under stable equivalences,  $\alpha(\mathcal{S})$ is a simple-minded system in  $A$-$\stmod$ and moreover  $|\mathcal{S}|=|\alpha(\mathcal{S})|$. Let  $e$ be the primitive idempotent element corresponding to $A$-module $\alpha(S)$ and take  $A'=(1-e)A(1-e)$. 
By Lemma \ref{symmetric-algebra},  $A'$ is a  representation-finite symmetric  algebra  and it has  $n-1$ non-isomorphic, non-projective simple $A$-modules.  By induction, the cardinality of a simple-minded system of $A'$ is $n-1$.  Let $\mathcal{D}$ be  the stable bi-perpendicular full subcategory of $\alpha(S)$ on $A$-$\stmod$.  By Theorem \ref{new-triangulated-category-2}, there is an equivalence $F:\mathcal{D}\rightarrow A'$-$\stmod$ as triangulated categories. By Theorem  \ref{sms-1-1-corresponding-1}, there are one to one correspondences as follows:
\vspace{-.1cm}
\[\xymatrix{
\{sms's\ of\ A{\text-}\stmod\ containing \ S\}  \ar@{<->}[d]^-{\alpha}  &   \\
\{sms's\ of\ A{\text-}\stmod\ containing\ \alpha(S)\} \ar@{<->}[d]^-{\beta}& \\
\{sms's\ of\ \mathcal{D}\} \ar@{<->}[d]^-{F} & \\
\{sms's\ of \  A'{\text-}\stmod\}, 
}\]
	
\noindent where $\beta$ is the bijection in Theorem \ref{sms-1-1-corresponding-1}, it maps  simple-minded system $\mathcal{S}$ on $A$-$\stmod$ to simple-minded system $\alpha(\mathcal{S})\backslash\{\alpha(S)\}$ on   $\mathcal{D}$. 
Hence   $F(\beta(\alpha(\mathcal{S})))= F(\alpha(\mathcal{S})\backslash\{\alpha(S)\})$ is a simple-minded system on $A'$-$\stmod$. By induction,  the cardinality of $F(\alpha(\mathcal{S})\backslash\{\alpha(S)\})$ is $n-1$. 
Since both $\alpha$ and $F$  are stable equivalences, the cardinality of  $\mathcal{S}$ is $n=(n-1)+1$, thus our induction is finished.
\end{proof}

\section{Gluing technique of simple-minded systems}
In this section, we construct recollements through simple-minded systems in the stable module category $A$-$\stmod$ of  a self-injective algebra $A$ and we study gluing technique of simple-minded systems. We also study extendible property of a simple-minded system in $A$-$\stmod$.
\subsection{Gluing technique of simple-minded systems} Let $A$ be a self-injective algebra. The condition $\Omega(\mathcal{S})\subseteq\mathcal{S}$ is a necessary condition for a family  $\mathcal{S}$ of objects  to be a simple-minded system in $A$-$\stmod$.  We state some results for this necessary condition as follows.
\begin{Lem} $($\cite[Lemma  4.6 and Lemma 4.7]{Z}$)$\label{Omega-F(S)}
Let $A$ be a self-injective algebra and $\mathcal{S}$  an  orthogonal system  in $A$-$\stmod$.  Then $\Omega(\mathcal{S})$ {\rm(}resp. $\Omega^{-1}(\mathcal{S})${\rm)} is contained in  $\mathcal{F}(\mathcal{S})$ if and only if $\Omega(\mathcal{F}(\mathcal{S}))$  {\rm(}resp. $\Omega^{-1}(\mathcal{F}(\mathcal{S}))${\rm)} is contained in  $\mathcal{F}(\mathcal{S})$.
\end{Lem}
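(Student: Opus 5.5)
The direction ``$\Leftarrow$'' is immediate: since $\mathcal{S}\subseteq\mathcal{F}(\mathcal{S})$, the inclusion $\Omega(\mathcal{F}(\mathcal{S}))\subseteq\mathcal{F}(\mathcal{S})$ gives $\Omega(\mathcal{S})\subseteq\mathcal{F}(\mathcal{S})$. For ``$\Rightarrow$'' I plan to use only two facts. First, $\Omega=[-1]$ is a triangle (auto)equivalence of $A$-$\stmod$. Second, $\mathcal{F}(\mathcal{S})=\bigcup_{n\geq0}(\mathcal{S})_{n}$ is extension-closed, i.e. $\mathcal{F}(\mathcal{S})\star\mathcal{F}(\mathcal{S})\subseteq\mathcal{F}(\mathcal{S})$. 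The orthogonality of $\mathcal{S}$ plays no role in the argument.

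The key observation concerns a distinguished triangle $S_{1}\to X\to S_{2}\to S_{1}[1]$. Applying $\Omega$ gives $\Omega S_{1}\to\Omega X\to\Omega S_{2}\to\Omega S_{1}[1]$. In $A$-$\stmod$ this is again a distinguished triangle, possibly after changing the sign of the third morphism, and such a sign change does not affect membership in $\star$. Concretely, this follows by taking syzygies of the corresponding short exact sequence via the horseshoe lemma. Hence
\[
\Omega(\mathcal{S}_{1}\star\mathcal{S}_{2})\subseteq\Omega(\mathcal{S}_{1})\star\Omega(\mathcal{S}_{2})
\]
for any families $\mathcal{S}_{1},\mathcal{S}_{2}$.

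With this in hand, I prove by induction on $n$ that $\Omega((\mathcal{S})_{n})\subseteq\mathcal{F}(\mathcal{S})$. The case $n=0$ holds because $\Omega(0)=0$. For $n=1$, we have $(\mathcal{S})_{1}=\mathcal{S}\cup\{0\}$ up to isomorphism, and the hypothesis $\Omega(\mathcal{S})\subseteq\mathcal{F}(\mathcal{S})$ gives the claim. For the inductive step,
\[
\Omega((\mathcal{S})_{n})=\Omega\bigl((\mathcal{S})_{n-1}\star(\mathcal{S}\cup\{0\})\bigr)\subseteq\Omega((\mathcal{S})_{n-1})\star\Omega(\mathcal{S}\cup\{0\})\subseteq\mathcal{F}(\mathcal{S})\star\mathcal{F}(\mathcal{S})\subseteq\mathcal{F}(\mathcal{S}).
\]
Taking the union over $n$ yields $\Omega(\mathcal{F}(\mathcal{S}))\subseteq\mathcal{F}(\mathcal{S})$. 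The statement for $\Omega^{-1}$ follows by the same argument with $\Omega^{-1}=[1]$ in place of $\Omega$.

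The only point needing care is the compatibility of $\Omega$ with $\star$, namely the sign convention when rotating the shifted triangle. Since $\star$ is defined through the existence of some distinguished triangle, and negating one morphism of a distinguished triangle yields an isomorphic triangle, this is harmless. I also work with objects up to isomorphism in $A$-$\stmod$ throughout, so that $\Omega$ of a projective summand is zero, consistent with the paper's convention.
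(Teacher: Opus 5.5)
Your argument is correct. The paper gives no proof of this lemma (it only cites \cite{Z}), and your induction over $(\mathcal{S})_n$ is the standard proof: it uses $\Omega(\mathcal{S}_1\star\mathcal{S}_2)\subseteq\Omega(\mathcal{S}_1)\star\Omega(\mathcal{S}_2)$ together with $\mathcal{F}(\mathcal{S})\star\mathcal{F}(\mathcal{S})\subseteq\mathcal{F}(\mathcal{S})$.

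One remark in your last paragraph is false, though your argument does not rely on it. Negating a \emph{single} morphism of a distinguished triangle does not in general give an isomorphic, or even a distinguished, triangle; negating two does. What you actually need is weaker. The rotation axiom shows directly that $\Omega S_1\to\Omega X\to\Omega S_2\to\Omega S_1[1]$ is distinguished once the third map is negated. Since membership in $\star$ depends only on the vertices, that suffices.
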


The following lemma is  a little unexpected and we include a  proof here. 
\begin{Lem}$($\cite[Lemma  4.8]{Z}$)$\label{omega-self-injective-1}
Let $A$ be a self-injective algebra and let $\mathcal{S}=\{S_{i}\mid i\in I\}$ be a Nakayama-stable {\rm(}that is, $\nu(S)=S${\rm)} orthogonal system in $A$-$\stmod$.  Then $\Omega(\mathcal{S})$ is contained in $\mathcal{F}(\mathcal{S})$ if and only if $\Omega^{-1}(\mathcal{S})$ is contained in  $\mathcal{F}(\mathcal{S})$. 	
\end{Lem}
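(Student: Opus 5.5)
By symmetry it suffices to prove one implication, say: if $\Omega(\mathcal{S})\subseteq\mathcal{F}(\mathcal{S})$, then $\Omega^{-1}(\mathcal{S})\subseteq\mathcal{F}(\mathcal{S})$. The converse is the same argument with $\Omega$ and $\Omega^{-1}$ swapped. The tool is Auslander--Reiten duality in $A$-$\stmod$: with the Serre functor $\mathbb{S}=\nu\Omega$ we have $\StHom_A(X,Y)\cong \D\StHom_A(Y,\nu\Omega X)$. Since $\nu$ is an exact autoequivalence commuting with $\Omega$, and $\nu(\mathcal{S})=\mathcal{S}$, we get $\nu(\mathcal{F}(\mathcal{S}))=\mathcal{F}(\mathcal{S})$. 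Also $\nu(\mathcal{S}^{\perp})=\mathcal{S}^{\perp}$ and $\nu({}^{\perp}\mathcal{S})={}^{\perp}\mathcal{S}$.

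First, Lemma \ref{Omega-F(S)} upgrades the hypothesis to $\Omega(\mathcal{F}(\mathcal{S}))\subseteq\mathcal{F}(\mathcal{S})$. Next comes a dual characterization of $\mathcal{F}(\mathcal{S})$. For any $X$, Serre duality gives
\[\StHom_A(S,X)\cong \D\StHom_A(X,\nu\Omega S)\qquad\text{and}\qquad \StHom_A(X,S)\cong \D\StHom_A(\Omega^{-1}\nu^{-1}S,X).\]
So $X\in\mathcal{S}^{\perp}$ if and only if $X\in{}^{\perp}(\nu\Omega\mathcal{S})={}^{\perp}(\Omega\mathcal{S})$. Likewise, $X\in{}^{\perp}\mathcal{S}$ if and only if $X\in(\Omega^{-1}\mathcal{S})^{\perp}$.

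The main step is to show that every $T\in\Omega^{-1}(\mathcal{S})$ lies in $\mathcal{F}(\mathcal{S})$. I would filter $T$ by $\mathcal{S}$ from below and from above. Take a nonzero map $S\to T$ and complete it to a triangle. Orthogonality (each $S$ is a stable brick) plus $\Omega S\in\mathcal{F}(\mathcal{S})$ lets us control the cone. Here $\StHom_A(S',\Omega^{-1}S)\cong\StHom_A(\Omega S',S)$, which is computable because $\Omega S'$ is filtered by $\mathcal{S}$. By induction on the length of an $\mathcal{S}$-filtration of $\Omega S'$, we get $\dim\StHom_A(\mathcal{F}(\mathcal{S}),\Omega^{-1}S)<\infty$. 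This lets us build a maximal ``$\mathcal{F}(\mathcal{S})$-part'' $U\to\Omega^{-1}S$ with cone $V\in\mathcal{S}^{\perp}$, as a finite iterated extension using Hom-finiteness. It remains to show $V=0$. By the characterization above, $V\in{}^{\perp}(\Omega\mathcal{S})$. Then apply $\Omega$ to the triangle $U\to\Omega^{-1}S\to V\to\Omega^{-1}U$, which gives $\Omega U\to S\to\Omega V\to U$. Here $\Omega U\in\mathcal{F}(\mathcal{S})$, since $\Omega$ preserves $\mathcal{F}(\mathcal{S})$. Since $S$ is a brick and $\mathcal{S}$ is orthogonal, the map $\Omega U\to S$ is either zero or, via the filtration of $\Omega U$, ``surjects'' onto $S$. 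In the zero case $\Omega V\cong S\oplus U$, so $\StHom_A(\Omega V,S)\neq 0$. Dualizing gives $\StHom_A(S,\nu V)\ne 0$, i.e. a map from $\nu^{-1}S\in\mathcal{S}$ to $V$, contradicting $V\in\mathcal{S}^{\perp}$. In the nonzero case, compare the triangle with the filtration to show $\Omega V\in\mathcal{F}(\mathcal{S})$. Then $\Omega V\in\mathcal{F}(\mathcal{S})\cap\Omega(\mathcal{S}^{\perp})$, and duality forces $\StHom_A(\Omega V,\Omega V)=0$, so $V=0$.

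The main obstacle is exactly this last step: excluding a nonzero $\mathcal{S}^{\perp}$-part of $\Omega^{-1}S$ without knowing that $\mathcal{F}(\mathcal{S})$ is functorially finite. I expect the cleanest route is a counting argument. Compare $\sum_{S'}\dim\StHom_A(S',\Omega^{-1}S)$ with $\sum_{S'}\dim\StHom_A(\Omega^{-1}S,S')$; these are equal via $\Omega$, the Serre duality isomorphisms above, and $\nu$ permuting $\mathcal{S}$. Then use the hypothesis $\Omega\mathcal{S}\subseteq\mathcal{F}(\mathcal{S})$ to identify $\Omega^{-1}S$ with an iterated extension having the same $\mathcal{S}$-multiplicities. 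If this counting fails, the fallback is to pass to the torsion pair $(\mathcal{F}(\mathcal{S}),\mathcal{S}^{\perp})$ via Theorem \ref{two-torsion-pairs}. That requires first proving contravariant finiteness from the finiteness of $\StHom_A(\mathcal{F}(\mathcal{S}),-)$ established above.
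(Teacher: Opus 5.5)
\textbf{There is a genuine gap: the main step has three independent holes.}

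First, the ``maximal $\mathcal{F}(\mathcal{S})$-part'' $U\to\Omega^{-1}S$ with cone $V\in\mathcal{S}^{\perp}$ is exactly a torsion-pair triangle for $(\mathcal{F}(\mathcal{S}),\mathcal{S}^{\perp})$ at $\Omega^{-1}S$. Its existence needs a right $\mathcal{F}(\mathcal{S})$-approximation of $\Omega^{-1}S$, which is a contravariant-finiteness property; that is why Theorem \ref{two-torsion-pairs} assumes it. The lemma has no such hypothesis, and Hom-finiteness does not make iterated peeling-off of $\mathcal{S}$ terminate. The bound $\dim\StHom_A(\mathcal{F}(\mathcal{S}),\Omega^{-1}S)<\infty$ holds only object by object, which gives nothing.

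Second, the duality step in your zero case is miscomputed. In fact $\StHom_A(\Omega V,S)\cong\D\StHom_A(S,\nu\Omega^{2}V)$, not $\D\StHom_A(S,\nu V)$. So there is no contradiction with $V\in\mathcal{S}^{\perp}$ unless $\mathcal{S}^{\perp}$ is $\Omega$-stable, and the paper derives that (Corollary \ref{omega-tri-subcat-1}) from this very lemma. In the zero case $V\cong\Omega^{-1}S\oplus\Omega^{-1}U$. What you would need is $\StHom_A(\mathcal{S},\Omega^{-1}S)\cong\StHom_A(\Omega\mathcal{S},S)\neq 0$, i.e.\ that $S$ occurs as a ``top'' of some $\Omega S'$, and Serre duality does not tell you that.

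Third, in the nonzero case ``$\Omega V\in\mathcal{F}(\mathcal{S})$'' is asserted without argument. Even granting it, deducing $\StHom_A(\Omega V,\Omega V)=0$ from $\Omega V\in\mathcal{F}(\mathcal{S})\cap(\Omega\mathcal{S})^{\perp}$ needs $\mathcal{F}(\mathcal{S})\subseteq\mathcal{F}(\Omega\mathcal{S})$. That is equivalent to $\Omega^{-1}\mathcal{S}\subseteq\mathcal{F}(\mathcal{S})$, the statement being proved. The counting fallback has the same defect. Duality gives $\sum_{S'}\dim\StHom_A(\Omega^{-1}S,S')=1$, but the other sum is $\sum_{S'}\dim\StHom_A(\Omega S',S)$, again about tops of the $\Omega S'$, and nothing you cite forces it to be $1$.

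\textbf{The missing idea} is to look at the \emph{bottom} of $\Omega S_j$ instead of approximating $\Omega^{-1}S$. Your own duality formula gives $\StHom_A(S_i,\Omega S_j)\cong\D\StHom_A(\Omega S_j,\nu\Omega S_i)\cong\D\StHom_A(S_j,\nu S_i)$, which is nonzero only for $S_i=\nu^{-1}S_j$. Take a filtration triangle $S_i\xrightarrow{\alpha}\Omega S_j\to M\to S_i[1]$ with $M\in(\mathcal{S})_{m-1}$ and $\alpha\neq 0$. Its bottom term is forced to be $\nu^{-1}S_j$. Rotating twice yields a triangle $M\to\Omega^{-1}\nu^{-1}S_j\to S_j\to M[1]$, so $\Omega^{-1}\nu^{-1}S_j\in(\mathcal{S})_{m-1}\star\mathcal{S}\subseteq\mathcal{F}(\mathcal{S})$. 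Since $\nu$ permutes $\mathcal{S}$, this covers all of $\Omega^{-1}(\mathcal{S})$. This is the paper's proof, and it needs no torsion pair or finiteness hypothesis.

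The paper simply takes the filtration triangle to be non-split. To justify $\alpha\neq 0$ cleanly, use two facts. A nonzero object of $\mathcal{F}(\mathcal{S})$ cannot lie in $\mathcal{S}^{\perp}=\mathcal{F}(\mathcal{S})^{\perp}$, so it receives a nonzero map from $\mathcal{S}$. And an induction on $n$ with the octahedral axiom shows that for any nonzero $f\colon S\to X$ with $S\in\mathcal{S}$ and $X\in(\mathcal{S})_n$, the cone of $f$ lies in $(\mathcal{S})_{n-1}$.
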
	
\begin{proof}
We prove only that if $\Omega(\mathcal{S})$ is contained in $\mathcal{F}(\mathcal{S})$, then $\Omega^{-1}(\mathcal{S})$ is also contained in  $\mathcal{F}(\mathcal{S})$. The converse part is dual. Take  $S_{j}$ in $\mathcal{S}$ for $j\in I$.  Since $\Omega(\mathcal{S})$ is contained in $\mathcal{F}(\mathcal{S})$, there is  a positive integer $m$  such that $\Omega(S_{j})$ is in some $ (\mathcal{S})_{m}$. Since $\Omega(S_{j})\in (\mathcal{S})_{m}$,  there is a non-split triangle as follows.
	
\begin{equation}\label{seq-1}
	S_{i}\xrightarrow{\alpha}\Omega(S_{j})\xrightarrow{}M\xrightarrow{} S_{i}[1],
\end{equation}
\noindent where $S_{i}\in\mathcal{S}$ and $M\in (\mathcal{S})_{m-1}$.
	
Claim that $S_{i}=\nu^{-1}(S_{j})$. Otherwise $\StHom_A(S_{i}, \Omega(S_{j}))\cong\D\StHom_A(\Omega(S_{j}), \nu\Omega(S_{i}))\cong\StHom_A(S_{j}, \nu(S_{i}))\\\cong 0,$ thus triangle (\ref{seq-1}) splits, it is a contradiction. 
Rotating the above triangle (\ref{seq-1}) to the right twice,  we have 
\[M\xrightarrow{\alpha}\nu^{-1}(S_{j})[1]\xrightarrow{}S_{j}\xrightarrow{} M[1].\] 	
Since  $M\in (\mathcal{S})_{m-1}$ and $S_{j}\in\mathcal{S}$, $\Omega^{-1}(\nu^{-1}(S_{j}))=\nu^{-1}(S_{j})[1]$ is in $(\mathcal{S})_{m}$. 
Since $\nu$  is an self-equivalence of $A$-$\stmod$ and it is  also a permutation on $\mathcal{S}$,  it is clear that $\{\Omega^{-1}(\nu^{-1}(S_{j}))\mid j\in I\}=\Omega^{-1}(\mathcal{S})$. Hence 
$\Omega^{-1}(\mathcal{S})$ is contained in  $\mathcal{F}(\mathcal{S})$.
\end{proof}

\begin{Prop}\label{omega-tri-subcat}
Let $A$ be a self-injective algebra and let $\mathcal{S}$ be a Nakayama-stable orthogonal system in $A$-$\stmod$.  If  $\Omega(\mathcal{S})$ is contained in $\mathcal{F}(\mathcal{S})$, then $\mathcal{F}(\mathcal{S})$ is a triangulated subcategory of $A$-$\stmod$. 	
\end{Prop}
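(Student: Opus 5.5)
To show that $\mathcal{F}(\mathcal{S})$ is a triangulated subcategory of $A$-$\stmod$, I would check three things: it is closed under isomorphisms and direct summands, it is closed under the shift $[1]=\Omega^{-1}$ and its inverse $\Omega$, and it is closed under cones of morphisms between its objects.

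First, the shifts. By hypothesis $\Omega(\mathcal{S})\subseteq\mathcal{F}(\mathcal{S})$. Since $\mathcal{S}$ is Nakayama-stable, Lemma \ref{omega-self-injective-1} gives $\Omega^{-1}(\mathcal{S})\subseteq\mathcal{F}(\mathcal{S})$ as well. Lemma \ref{Omega-F(S)} then upgrades both inclusions to $\Omega(\mathcal{F}(\mathcal{S}))\subseteq\mathcal{F}(\mathcal{S})$ and $\Omega^{-1}(\mathcal{F}(\mathcal{S}))\subseteq\mathcal{F}(\mathcal{S})$. Because $\Omega$ and $\Omega^{-1}$ are mutually inverse auto-equivalences of $A$-$\stmod$, this gives $\mathcal{F}(\mathcal{S})[1]=\mathcal{F}(\mathcal{S})$.

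Next, cones. Take a triangle $X\xrightarrow{f}Y\to Z\to X[1]$ with $X,Y\in\mathcal{F}(\mathcal{S})$. Rotating gives a triangle $Y\to Z\to X[1]\to Y[1]$. Here $Y\in\mathcal{F}(\mathcal{S})$ and $X[1]\in\mathcal{F}(\mathcal{S})$ by the previous step. Hence $Z\in\mathcal{F}(\mathcal{S})\star\mathcal{F}(\mathcal{S})\subseteq\mathcal{F}(\mathcal{S})$, because $\mathcal{F}(\mathcal{S})$ is extension closed. The same argument handles the other positions in the triangle, so all three terms lie in $\mathcal{F}(\mathcal{S})$ as soon as two of them do. Closure under isomorphism is built into the definition of $\star$. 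Closure under finite direct sums holds because split triangles are extensions.

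The main obstacle is closure under direct summands, which a triangulated subcategory is usually required to satisfy (thickness). My plan for this step uses that $\mathcal{S}$ is an orthogonal system, so objects of $\mathcal{F}(\mathcal{S})$ behave like modules with a finite "composition series" whose factors lie in $\mathcal{S}$. Given $X\oplus X'\in\mathcal{F}(\mathcal{S})$, I would show $X\in\mathcal{F}(\mathcal{S})$ by induction on the filtration length $n$ with $X\oplus X'\in(\mathcal{S})_n$, as follows.

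\begin{enumerate}[$(1)$]
\item Pick a nonzero morphism from some $S\in\mathcal{S}$ into $X\oplus X'$; such a morphism exists from the first step of the filtration. Its component into $X$ or into $X'$ is nonzero; say it is the one into $X$.
\item Use that morphism $S\to X$ to peel off one copy of $S$ from $X$.
\item The remaining object lies in a smaller filtration stage, so the induction hypothesis applies.
\end{enumerate}

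Alternatively, I could appeal to the standard fact (as in Dugas) that the extension closure of an orthogonal system of bricks is closed under direct summands. If thickness is not required for the statement, this step can be omitted.
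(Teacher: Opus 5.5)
Your argument is correct and follows the paper's route. Lemma \ref{omega-self-injective-1} turns $\Omega(\mathcal{S})\subseteq\mathcal{F}(\mathcal{S})$ into $\Omega^{-1}(\mathcal{S})\subseteq\mathcal{F}(\mathcal{S})$, Lemma \ref{Omega-F(S)} upgrades both to $\Omega^{\pm1}(\mathcal{F}(\mathcal{S}))\subseteq\mathcal{F}(\mathcal{S})$, and extension-closure then gives closure under cones, which is all the paper means by a triangulated subcategory (see the proof of Corollary \ref{omega-tri-subcat-1}).

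The direct-summand step is therefore unnecessary, and you should drop the peeling sketch rather than rely on it. As written it is unjustified: the filtration of $X\oplus X'$ comes from the full map $S\to X\oplus X'$, and nothing shows that the cone of the component $S\to X$, summed with $X'$, lies in (or is a summand of an object of) $(\mathcal{S})_{n-1}$.
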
	
\begin{proof}
It is a direct consequence of Lemma \ref{omega-self-injective-1} and definition of a triangulated subbcategory. Note that $\mathcal{F}(\mathcal{S})$  is the smallest extension-closed subcategory containing $\mathcal{S}$.
\end{proof}

\begin{Cor}\label{omega-tri-subcat-1}
Let $A$ be a self-injective algebra and let $\mathcal{S}$ be a Nakayama-stable orthogonal system in $A$-$\stmod$.  If   $\Omega(\mathcal{S})$ is contained in $\mathcal{F}(\mathcal{S})$, then   $^{\perp}\mathcal{S}$ and  $ \mathcal{S}^{\perp}$ are triangulated subcategories. In particular, the stable bi-perpendicular category  $^{\perp}\mathcal{S}^{\perp}$ is a triangulated subcategory of $A$-$\stmod$. 	
\end{Cor}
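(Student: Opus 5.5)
We need to show ${}^{\perp}\mathcal{S}$ and $\mathcal{S}^{\perp}$ are closed under $\Omega$ and $\Omega^{-1}$. Both are already extension closed and closed under direct summands, as noted in Section 2. So the whole argument reduces to shift-closure, and we use Proposition \ref{omega-tri-subcat} together with Lemma \ref{omega-self-injective-1}.

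\emph{Step 1.} By the hypothesis $\Omega(\mathcal{S})\subseteq\mathcal{F}(\mathcal{S})$ and Lemma \ref{omega-self-injective-1}, which uses Nakayama-stability, we also have $\Omega^{-1}(\mathcal{S})\subseteq\mathcal{F}(\mathcal{S})$. Lemma \ref{Omega-F(S)} then gives $\Omega^{\pm1}(\mathcal{F}(\mathcal{S}))\subseteq\mathcal{F}(\mathcal{S})$.

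\emph{Step 2: closure of $\mathcal{S}^{\perp}$.} Take $Y\in\mathcal{S}^{\perp}$. Since $\StHom_A(-,Y)$ is cohomological, it vanishes on extensions of objects of $\mathcal{S}$, so $\StHom_A(\mathcal{F}(\mathcal{S}),Y)=0$. For $S\in\mathcal{S}$ the shift adjunctions give
\[\StHom_A(S,\Omega Y)\cong\StHom_A(\Omega^{-1}S,Y)=0 \quad\text{and}\quad \StHom_A(S,\Omega^{-1}Y)\cong\StHom_A(\Omega S,Y)=0,\]
because $\Omega^{\pm1}S\in\mathcal{F}(\mathcal{S})$. Hence $\Omega^{\pm1}Y\in\mathcal{S}^{\perp}$.

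\emph{Step 3: closure of ${}^{\perp}\mathcal{S}$.} The argument is dual. For $X\in{}^{\perp}\mathcal{S}$ we have $\StHom_A(X,\mathcal{F}(\mathcal{S}))=0$, and then
\[\StHom_A(\Omega^{\pm1}X,S)\cong\StHom_A(X,\Omega^{\mp1}S)=0.\]

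\emph{Step 4.} A full subcategory closed under isomorphisms, shifts in both directions and extensions is a triangulated subcategory: the cone of a morphism $X\to Y$ inside it lies in $\{Y\}\star\{\Omega^{-1}X\}$. The intersection ${}^{\perp}\mathcal{S}^{\perp}$ inherits all of these closure properties, so it is a triangulated subcategory as well.

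The only delicate point is Step 1, namely obtaining $\Omega^{-1}$-closure of $\mathcal{F}(\mathcal{S})$ from $\Omega$-closure. This is exactly where Nakayama-stability enters, and it is already handled by Lemma \ref{omega-self-injective-1}. The rest is routine adjunction bookkeeping.
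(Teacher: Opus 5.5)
Your proof is correct and follows essentially the same route as the paper: you identify $\mathcal{S}^{\perp}$ with $\mathcal{F}(\mathcal{S})^{\perp}$, use the shift-closure of $\mathcal{F}(\mathcal{S})$ (from Lemma \ref{omega-self-injective-1} via Nakayama-stability, which the paper packages as Proposition \ref{omega-tri-subcat}) together with the shift adjunction, and conclude from extension-closure, while also spelling out the dual case and the cone argument that the paper leaves implicit. Your appeal to Lemma \ref{Omega-F(S)} is harmless but not needed, since you only use $\Omega^{\pm1}S\in\mathcal{F}(\mathcal{S})$ for $S\in\mathcal{S}$.
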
	
\begin{proof}
We only prove that $\mathcal{S}^{\perp}$  is a triangulated subcategory, since the case for $^{\perp}\mathcal{S}$ is similar to prove. Note that, for  an object $M\in A$-$\stmod$, $M\in\mathcal{S}^{\perp}$ if and only if  $M\in\mathcal{F}(\mathcal{S})^{\perp}$.  Take  $M\in\mathcal{S}^{\perp}[i]$ for any integer $i$.  We have $M[-i]\in\mathcal{S}^{\perp}$,  therefore  $\StHom_A(S,M[-i])\cong \StHom_A(S[i],M)=0$ for any $S\in\mathcal{F}(\mathcal{S})$ and $i\in\mathbb{Z}$.  By Proposition \ref{omega-tri-subcat}, $\mathcal{F}(\mathcal{S})$ is a triangulated subcategory, then $\mathcal{F}(\mathcal{S})[i]=\mathcal{F}(\mathcal{S})$. Therefore $M\in\mathcal{F}(\mathcal{S})^{\perp}=\mathcal{S}^{\perp}$. Thus $\mathcal{S}^{\perp}[i]\subseteq\mathcal{S}^{\perp}$.  Thus $\mathcal{S}^{\perp}$  is closed under shift functor $[i]$ for any integer $i$. Since $\mathcal{S}^{\perp}$ is  also closed under extension, by definition, it is a  triangulated subcategory.
\end{proof}

\begin{Lem}\label{omega-tri-subcat-0}
Let $A$ be a self-injective algebra and let $\mathcal{S}$ be a Nakayama-stable orthogonal system in $A$-$\stmod$.  If  $\Omega(\mathcal{S})$ is contained in $\mathcal{F}(\mathcal{S})$ and $\mathcal{F}(\mathcal{S})$ is contravariantly finite {\rm(}resp. covariantly finite{\rm)} in $A$-$\stmod$, then  $^{\perp}\mathcal{S}={^{\perp}\mathcal{S}^{\perp}}$ {\rm(}resp. ${\mathcal{S}^{\perp}}={^{\perp}\mathcal{S}^{\perp}}${\rm)}.
\end{Lem}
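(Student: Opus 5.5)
We argue the contravariant case; the covariant case is dual. The inclusion ${^{\perp}\mathcal{S}^{\perp}}\subseteq{^{\perp}\mathcal{S}}$ is trivial, so we must show that every $M\in{^{\perp}\mathcal{S}}$ also lies in $\mathcal{S}^{\perp}$. We use the torsion pair $(\mathcal{F}(\mathcal{S}),\mathcal{S}^{\perp})$ of Theorem \ref{two-torsion-pairs}, which exists because $\mathcal{F}(\mathcal{S})$ is contravariantly finite. For $M\in{^{\perp}\mathcal{S}}$ take the decomposition triangle
\[\xymatrix{X\ar[r]^-{\alpha} & M\ar[r]^-{\beta} & Y\ar[r]^-{\gamma} & X[1]}\]
with $X\in\mathcal{F}(\mathcal{S})$, $Y\in\mathcal{S}^{\perp}$ and $\alpha$ a right $\mathcal{F}(\mathcal{S})$-approximation. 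The goal is to show $X=0$; then $M\cong Y\in\mathcal{S}^{\perp}$.

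The key input is Serre duality combined with Nakayama-stability. Since $\mathbb{S}=\nu\Omega=\nu\circ[-1]$, we have
\[\StHom_A(S,M)\cong \D\StHom_A(M,\nu(S)[-1]).\]
By Proposition \ref{omega-tri-subcat}, $\mathcal{F}(\mathcal{S})$ is a triangulated subcategory. Because $\nu(\mathcal{S})=\mathcal{S}$ and $\nu$ is a triangle equivalence, $\nu(S)[-1]\in\mathcal{F}(\mathcal{S})$. Since $M\in{^{\perp}\mathcal{S}}$, and membership in ${^{\perp}\mathcal{S}}$ is equivalent to membership in ${^{\perp}\mathcal{F}(\mathcal{S})}$ (the class of objects $Z$ with $\StHom_A(M,Z)=0$ is extension closed), we get $\StHom_A(M,\nu(S)[-1])=0$. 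Hence $\StHom_A(S,M)=0$ for all $S\in\mathcal{S}$, i.e.\ $M\in\mathcal{S}^{\perp}$.

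This argument in fact gives the containment directly, without the torsion triangle. So the plan is just the duality computation above, together with the observation (the one point needing care) that ${^{\perp}\mathcal{S}}={^{\perp}\mathcal{F}(\mathcal{S})}$ and that $\mathcal{F}(\mathcal{S})$ is closed under $[\pm1]$ and $\nu^{\pm1}$. The approximation hypothesis then seems superfluous for the equality itself; I would remark that it is included so that the torsion pair exists for later use.

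As an alternative check via the triangle: if one prefers to use it, the morphism $\alpha\colon X\to M$ with $X\in\mathcal{F}(\mathcal{S})$ vanishes by the duality above, because $\StHom_A(\mathcal{F}(\mathcal{S}),M)=0$. Thus the triangle splits with $\alpha=0$, and minimality of $\alpha$ forces $X=0$.

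By symmetry, Corollary \ref{omega-tri-subcat-1} shows that both perpendicular categories are in fact equal to ${^{\perp}\mathcal{S}^{\perp}}$. The main obstacle is only keeping the shifts in the Serre functor correct, so that $\nu(S)[-1]$, rather than $\nu(S)[1]$, appears; either way it lies in $\mathcal{F}(\mathcal{S})$ by Lemma \ref{omega-self-injective-1}.
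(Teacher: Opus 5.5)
Your proof is correct, but it takes a different route from the paper's.

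\textbf{The paper's route.} The paper takes an indecomposable $X\in{^{\perp}\mathcal{S}}$ and forms the minimal triangle $\mathbf{c}X\to X\to\mathbf{d}X\to\mathbf{c}X[1]$ for the torsion pair $(\mathcal{F}(\mathcal{S}),\mathcal{S}^{\perp})$. This is the only place contravariant finiteness enters. It then invokes the dual of Dugas' lemma (Lemma \ref{subset-of-sms-bipen}, generalized in Remark \ref{bi-per-contra}) to get $\mathbf{d}X\in{^{\perp}\mathcal{S}^{\perp}}$. Since $\mathbf{c}X[1]\in\mathcal{F}(\mathcal{S})$, the connecting morphism vanishes. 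Finally it uses indecomposability of $X$ and $\mathcal{F}(\mathcal{S})\cap{^{\perp}\mathcal{S}}=0$ to force $\mathbf{c}X=0$.

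\textbf{Your route.} You apply Serre duality directly: $\StHom_A(S,M)\cong\D\StHom_A(M,\nu\Omega S)$. You combine this with ${^{\perp}\mathcal{S}}={^{\perp}\mathcal{F}(\mathcal{S})}$ and $\nu\Omega S\in\mathcal{F}(\mathcal{S})$. The latter follows immediately from $\Omega S\in\mathcal{F}(\mathcal{S})$ and $\nu(\mathcal{F}(\mathcal{S}))=\mathcal{F}(\mathcal{S})$, so you do not even need Proposition \ref{omega-tri-subcat}. This is essentially the Serre-duality computation hidden inside Dugas' lemma, applied without the detour through the torsion triangle.

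\textbf{What each approach buys.} Your argument is self-contained. It does not depend on the ``routine'' generalization asserted in Remark \ref{bi-per-contra}, and it needs no reduction to indecomposable objects. As you observe, it also shows the functorial-finiteness hypothesis is superfluous for this equality. The same computation with $\mathbb{S}^{-1}=\nu^{-1}\Omega^{-1}$ and Lemma \ref{omega-self-injective-1} gives $\mathcal{S}^{\perp}\subseteq{^{\perp}\mathcal{S}}$. Hence ${^{\perp}\mathcal{S}}=\mathcal{S}^{\perp}={^{\perp}\mathcal{S}^{\perp}}$ holds under $\nu(\mathcal{S})=\mathcal{S}$ and $\Omega(\mathcal{S})\subseteq\mathcal{F}(\mathcal{S})$ alone. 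The paper's version instead keeps everything inside the torsion-pair framework it reuses for the splitting lemma and Theorem \ref{recollement-sms}.

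\textbf{One small correction.} The reverse inclusion $\mathcal{S}^{\perp}\subseteq{^{\perp}\mathcal{S}}$ comes from that dual Serre-duality computation. It does not come from Corollary \ref{omega-tri-subcat-1}, which only says the two perpendicular categories are triangulated subcategories.
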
	
\begin{proof}
We only prove $^{\perp}\mathcal{S}={^{\perp}\mathcal{S}^{\perp}}$, since the case ${^{\perp}\mathcal{S}^{\perp}}=\mathcal{S}^{\perp}$ can be proved dually. It is clear that ${^{\perp}\mathcal{S}^{\perp}}\subseteq{^{\perp}\mathcal{S}}$. It suffices to show $^{\perp}\mathcal{S}\subseteq{^{\perp}\mathcal{S}^{\perp}}$. Since  $\mathcal{F}(\mathcal{S})$ is contravariantly finite in $A$-$\stmod$, by Theorem \ref{two-torsion-pairs}, $(\mathcal{F}(\mathcal{S}),\mathcal{S}^{\bot})$ is a torsion pair  in $A$-$\stmod$. Take a non-zero indecomposable object $X\in {^{\perp}\mathcal{S}}$ and consider  the minimal triangle of $X$ via torsion pair $(\mathcal{F}(\mathcal{S}),\mathcal{S}^{\bot})$ as follows.
\[\xymatrix{{\bf c}X\ar[r]^{f} &X\ar[r]^{g} &{\bf d}X\ar[r]^{h}& {\bf c}X[1].}\]
Since $\mathcal{S}$ is  Nakayama-stable,  by Lemma \ref{subset-of-sms-bipen} and Remark \ref{bi-per-contra}, 
${\bf d}X\in{^{\perp}\mathcal{S}^{\perp}}$. Since ${\bf c}X\in\mathcal{F}(\mathcal{S})$ and $\mathcal{F}(\mathcal{S})$ is a triangulated subcategory,  ${\bf c}X[1]\in\mathcal{F}(\mathcal{S})$. Thus $h=0,$ $g$ is a split epimorphism and $f$ is a split monomorphism. Since $X$ is indecomposable, ${\bf c}X\cong X$ or ${\bf c}X\cong 0$. Since $\mathcal{F}(\mathcal{S})\cap{^{\perp}\mathcal{S}}=\{0\}$, ${\bf c}X=0$. Thus $g$ is an isomorphism and $X\cong{\bf d}X \in{^{\perp}\mathcal{S}^{\perp}}$. Hence  $^{\perp}\mathcal{S}\subseteq{^{\perp}\mathcal{S}^{\perp}}$. 
\end{proof}

\begin{Lem}\label{splitting- torsion-pair}
Let $A$ be a self-injective algebra and let $\mathcal{S}$ be a Nakayama-stable orthogonal system in $A$-$\stmod$.  If  $\Omega(\mathcal{S})$ is contained in $\mathcal{F}(\mathcal{S})$ and $\mathcal{F}(\mathcal{S})$ is functorically finite in $A$-$\stmod$, then 
torsion pairs $(^{\bot}{\mathcal{S}},\mathcal{F}(\mathcal{S}))$ and  $(\mathcal{F}(\mathcal{S}),\mathcal{S}^{\bot})$ are splitting, that is, for any indecomposble object $X$ in $A$-$\stmod$, $X$ is either in $\mathcal{F}(\mathcal{S})$ or in $^{\bot}{\mathcal{S}}=\mathcal{S}^{\bot}$.
\end{Lem}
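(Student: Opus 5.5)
The plan is to combine the two torsion pairs with the results of this subsection. Since $\mathcal{F}(\mathcal{S})$ is functorially finite, Theorem \ref{two-torsion-pairs} shows that both $(^{\bot}\mathcal{S},\mathcal{F}(\mathcal{S}))$ and $(\mathcal{F}(\mathcal{S}),\mathcal{S}^{\bot})$ are torsion pairs in $A$-$\stmod$. Lemma \ref{omega-tri-subcat-0} is available in both its contravariant and covariant versions, so we obtain
\[{^{\perp}\mathcal{S}}={^{\perp}\mathcal{S}^{\perp}}=\mathcal{S}^{\perp}.\]
This settles the identification $^{\bot}\mathcal{S}=\mathcal{S}^{\bot}$ in the statement. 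It remains to show that every indecomposable object lies in one of the two classes.

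Take an indecomposable $X$ in $A$-$\stmod$ and consider its minimal triangle with respect to $(\mathcal{F}(\mathcal{S}),\mathcal{S}^{\bot})$:
\[\xymatrix{{\bf c}X\ar[r]^{f} &X\ar[r]^{g} &{\bf d}X\ar[r]^{h}& {\bf c}X[1].}\]
Here ${\bf c}X\in\mathcal{F}(\mathcal{S})$ and ${\bf d}X\in\mathcal{S}^{\bot}={^{\perp}\mathcal{S}}$. By Proposition \ref{omega-tri-subcat}, $\mathcal{F}(\mathcal{S})$ is a triangulated subcategory, so ${\bf c}X[1]\in\mathcal{F}(\mathcal{S})$. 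Since ${\bf d}X\in{^{\perp}\mathcal{S}}$ and $\mathcal{F}(\mathcal{S})\subseteq{\bf (}^{\perp}({^{\perp}\mathcal{S}}){\bf )}$ (the class $^{\perp}\mathcal{S}$ is left orthogonal to $\mathcal{S}$, hence to its extension closure), we get $\StHom_A({\bf d}X,{\bf c}X[1])=0$. Therefore $h=0$ and the triangle splits, giving $X\cong{\bf c}X\oplus{\bf d}X$.

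Because $X$ is indecomposable, either ${\bf d}X=0$, so that $X\cong{\bf c}X\in\mathcal{F}(\mathcal{S})$, or ${\bf c}X=0$, so that $X\cong{\bf d}X\in\mathcal{S}^{\bot}={^{\bot}\mathcal{S}}$. This is exactly splitting for $(\mathcal{F}(\mathcal{S}),\mathcal{S}^{\bot})$, and, by the equality of the perpendicular categories, also for $(^{\bot}\mathcal{S},\mathcal{F}(\mathcal{S}))$. The two alternatives are exclusive for nonzero $X$, since $\mathcal{F}(\mathcal{S})\cap\mathcal{S}^{\bot}=0$.

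The main obstacle is justifying the vanishing $\StHom_A({\bf d}X,{\bf c}X[1])=0$. This needs both facts used above: that $\mathcal{F}(\mathcal{S})$ is closed under $[1]$, which relies on Nakayama stability through Lemma \ref{omega-self-injective-1}, and that $^{\perp}\mathcal{S}$ is orthogonal to all of $\mathcal{F}(\mathcal{S})$. The latter follows by induction on $n$ for $(\mathcal{S})_n$, using the long exact Hom sequence. The rest of the argument is routine.
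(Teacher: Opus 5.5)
Your proof is correct and is essentially the paper's own argument. Like the paper, you take both torsion pairs from Theorem~\ref{two-torsion-pairs} and the identification ${}^{\perp}\mathcal{S}={}^{\perp}\mathcal{S}^{\perp}=\mathcal{S}^{\perp}$ from Lemma~\ref{omega-tri-subcat-0}, then show $h=0$ in the minimal $(\mathcal{F}(\mathcal{S}),\mathcal{S}^{\perp})$-triangle (since $\mathbf{d}X\in{}^{\perp}\mathcal{S}$ and $\mathbf{c}X[1]\in\mathcal{F}(\mathcal{S})$ by Proposition~\ref{omega-tri-subcat}), so the triangle splits and indecomposability finishes. One notational point: the inclusion you need is $\mathcal{F}(\mathcal{S})\subseteq({}^{\perp}\mathcal{S})^{\perp}$, as your parenthetical says, though the version you wrote, ${}^{\perp}({}^{\perp}\mathcal{S})$, also holds once ${}^{\perp}\mathcal{S}=\mathcal{S}^{\perp}$ is known.
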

\begin{proof}
By  Theorem \ref{two-torsion-pairs}, there are two torsion pairs $(^{\bot}{\mathcal{S}},\mathcal{F}(\mathcal{S}))$ and  $(\mathcal{F}(\mathcal{S}),\mathcal{S}^{\bot})$ in $A$-$\stmod$. By Lemma \ref{omega-tri-subcat-0}, we have $^{\perp}\mathcal{S}={^{\perp}\mathcal{S}^{\perp}}= {\mathcal{S}^{\perp}}$.
We prove only that  torsion pair   $(\mathcal{F}(\mathcal{S}),\mathcal{S}^{\bot})$ is splitting, since the other one can be handled dually. For a non-zero indecomposable object $X$ in $A$-$\stmod$, there is a triangle as follows.
\[\xymatrix{{\bf c}X\ar[r]^{f} &X\ar[r]^{g} &{\bf d}X\ar[r]^{h}& {\bf c}X[1],}\] 
where ${\bf c}X\in\mathcal{F}(\mathcal{S})$ and ${\bf d}X\in\mathcal{S}^{\bot}$. Since  $\mathcal{F}(\mathcal{S})$ is triangulated, ${\bf c}X[1]\in\mathcal{F}(\mathcal{S})$. By  Lemma \ref{omega-tri-subcat-0},
${\bf d}X\in\mathcal{S}^{\bot}={^{\perp}\mathcal{S}^{\perp}}$. Thus $h=0$. Then $g$ is a split epimorphism. Since $X$ is non-zero and indecomposable, ${\bf d}X$ is isomorphic to zero or $g$ is an isomorphism. Hence $X\cong{\bf c}X\in\mathcal{F}(\mathcal{S})$ or $X\cong{\bf d}X\in\mathcal{S}^{\bot}$. 
\end{proof}

\begin{Cor}\label{triangulated-category-2}
Let  $A$ be a  self-injective $k$-algebra and  let $\mathcal{S}=\{S_1,\cdots\!,\, S_n\}$ be the set of simple modules on $A$-$\m$. Let $\mathcal{S'}=\{S_1,\cdots\!,\, S_m\}$ be a Nakayama-stable subset of  $\mathcal{S}$ such that $\Omega(\mathcal{S'})\subseteq\mathcal{F}(\mathcal{S'})$.
Take $$\mathcal{D}={^{\bot}(\mathcal{S'})^{\bot}}=\{Z\in A{\text-}\stmod\, |\, \StHom_A(S,Z)=0, \    \StHom_A(Z,S)=0, S\in\mathcal{S'} \}.$$ 
Then $\mathcal{D}$ is a triangulated category of $A$-$\stmod$. Moreover $\mathcal{D}$ and the stable module category  $eAe$-$\stmod$ of $eAe$ are equivalent as triangulated categories, where $e=e_{m+1}+\cdots+e_{n}$, $m<n$, $e_{i}$ is the primitive idempotent element corresponding to simple $A$-module $S_i$.	
\end{Cor}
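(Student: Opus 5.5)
The corollary differs from Theorem \ref{new-triangulated-category-2} only in the extra hypothesis $\Omega(\mathcal{S'})\subseteq\mathcal{F}(\mathcal{S'})$. Under this hypothesis $\mathcal{D}$ becomes an honest triangulated subcategory of $A$-$\stmod$, not merely a category carrying the new structure. The plan is therefore to prove this, then check that the Coelho Sim\~oes--Pauksztello structure $([1]',\varepsilon')$ agrees with the one inherited from $A$-$\stmod$. After that, the equivalence with $eAe$-$\stmod$ follows directly from Theorem \ref{new-triangulated-category-2}.

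First, $\mathcal{S'}$ is an orthogonal system. It consists of pairwise non-isomorphic simple modules, which are stable bricks with no nonzero stable maps between distinct ones. It is also Nakayama-stable by hypothesis. Together with $\Omega(\mathcal{S'})\subseteq\mathcal{F}(\mathcal{S'})$, Corollary \ref{omega-tri-subcat-1} shows that ${^{\perp}(\mathcal{S'})^{\perp}}=\mathcal{D}$ is a triangulated subcategory of $A$-$\stmod$. Hence $\mathcal{D}$ is closed under $\Omega^{\pm1}$ and under cones of morphisms in $\mathcal{D}$.

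Next, compare the two structures. For $X\in\mathcal{D}$ we have $\Sigma X=X[1]\in\mathcal{D}\subseteq\mathcal{S'}^{\perp}$. Consequently every map from $\mathcal{F}(\mathcal{S'})$ to $X[1]$ vanishes, and the right minimal $\mathcal{F}(\mathcal{S'})$-approximation $s_X\to X[1]$ is $0\to X[1]$. So $X[1]'=X[1]$, and likewise on morphisms. Similarly, for $f\colon X\to Y$ in $\mathcal{D}$ the cone $Z_f$ lies in $\mathcal{D}$, so $s_f=0$ and $Z=Z_f$. The standard distinguished triangles of $\mathcal{D}$ are then exactly the triangles of $A$-$\stmod$ with all terms in $\mathcal{D}$. (Alternatively, Lemma \ref{splitting- torsion-pair} gives this once functorial finiteness of $\mathcal{F}(\mathcal{S'})$ is known, which follows from Theorem \ref{subset-of-sms} whenever $\mathcal{S'}$ lies in a simple-minded system, e.g.\ the simples.)

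Finally, apply Theorem \ref{new-triangulated-category-2}. It gives a triangle equivalence $F\colon(\mathcal{D},[1]',\varepsilon')\to eAe$-$\stmod$ with $e=e_{m+1}+\cdots+e_n$, and by the previous step this is an equivalence for the inherited structure. The main obstacle is the identification of the triangulated structures. It rests on the minimal approximation of an object of $\mathcal{S'}^{\perp}$ being zero, which is immediate, but the morphism part of $[1]'$ must also be checked to agree with the original shift. This holds because all the connecting maps $\sigma$ are zero.
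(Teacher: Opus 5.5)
Your proposal is correct and follows the paper's own three-step proof. Corollary \ref{omega-tri-subcat-1} makes $\mathcal{D}$ a triangulated subcategory, the Coelho Sim\~oes--Pauksztello structure is then identified with the inherited one, and Theorem \ref{new-triangulated-category-2} supplies the triangle equivalence with $eAe$-$\stmod$. The one difference is that you actually verify the middle step, which the paper dismisses as ``not hard to see'': since $\mathcal{D}\subseteq\mathcal{F}(\mathcal{S'})^{\perp}$ is closed under shifts and cones, every minimal right $\mathcal{F}(\mathcal{S'})$-approximation vanishes, and so do all $s_X$, $s_f$ and $\sigma$.
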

\begin{proof}
By Corollary \ref{omega-tri-subcat-1}, $\mathcal{D}$ is a triangulated subcategory of $A$-$\stmod$. It is not hard to see that, if $\Omega(\mathcal{S'})\subseteq\mathcal{F}(\mathcal{S'})$, then the triangulated structure of $\mathcal{D}$ given in  Section 2 is the same with $\mathcal{D}$ as a triangulated subcategory of $A$-$\stmod$. It follows from Theorem \ref{new-triangulated-category-2} that $\mathcal{D}$ is equivalent to $eAe$-$\stmod$ as triangulated categories. 
\end{proof}

\begin{Them}\label{recollement-sms}
Let $A$ be a self-injective algebra and $\mathcal{S}$ a Nakayama-stable orthogonal system in $A$-$\stmod$. If  $\mathcal{F}(\mathcal{S})$ is functorially finite in $A$-$\stmod$ such that  $\Omega(\mathcal{S})$ is contained in $\mathcal{F}(\mathcal{S})$,  then 
\begin{enumerate}[$(1)$]
\item  $A$-$\stmod$ is a recollement of $\mathcal{F}(\mathcal{S})$ and $\mathcal{S}^{\bot}$ as follows.
\begin{align}\label{recoll-2}
\xymatrix{\mathcal{F}(\mathcal{S})\ar^-{i_*=i_!}[rr]&&A{\text-}\stmod\ar^-{j^!=j^*}[rr]\ar^-{i^!}@/^1.2pc/[ll]\ar_-{i^*}@/_1.6pc/[ll]&&\mathcal{S}^{\bot}\ar^-{j_*}@/^1.2pc/[ll]\ar_-{j_!}@/_1.6pc/[ll]}.
\end{align}
\item Let $\mathcal{M}=\{M_{i}\mid i=1,\cdots,m\}$ be a simple-minded system of $\mathcal{F}(\mathcal{S})$ and $\mathcal{N}=\{N_{j}\mid i=1,\cdots,n\}$ a simple-minded system of $\mathcal{S}^{\bot}$.  Then $\mathcal{S}'=\{i_{\ast}(M)\mid M\in\mathcal{M}\}\cup\{j_{\ast}(N)\mid N\in\mathcal{N}\}\}$ a simple-minded system of $A$-$\stmod$.
\end{enumerate}

\end{Them}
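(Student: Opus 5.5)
For part (1) I will use the converse direction of Theorem \ref{TTF-triple-recollement}. It needs two stable t-structures $(\mathcal{U},\mathcal{V})$ and $(\mathcal{V},\mathcal{W})$ with $\mathcal{V}=\mathcal{F}(\mathcal{S})$ and $\mathcal{W}=\mathcal{S}^{\bot}$, and I take $\mathcal{U}={^{\bot}\mathcal{S}}$.

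Since $\mathcal{F}(\mathcal{S})$ is functorially finite, Theorem \ref{two-torsion-pairs} gives the torsion pairs $({^{\bot}\mathcal{S}},\mathcal{F}(\mathcal{S}))$ and $(\mathcal{F}(\mathcal{S}),\mathcal{S}^{\bot})$. Because $\mathcal{S}$ is Nakayama-stable and $\Omega(\mathcal{S})\subseteq\mathcal{F}(\mathcal{S})$:
\begin{itemize}
\item Proposition \ref{omega-tri-subcat} shows that $\mathcal{F}(\mathcal{S})$ is a triangulated subcategory;
\item Corollary \ref{omega-tri-subcat-1} shows that ${^{\bot}\mathcal{S}}$ and $\mathcal{S}^{\bot}$ are triangulated subcategories.
\end{itemize}
A torsion pair whose two halves are triangulated subcategories is a stable t-structure. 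The Hom-vanishing and the decomposition triangles come from the torsion pair, and the closure under $[1]$ and $[-1]$ comes from the subcategories being triangulated. So both pairs are stable t-structures, and Theorem \ref{TTF-triple-recollement}(2) yields the recollement (\ref{recoll-2}), with $i_*$ and $j_*$ the inclusion functors.

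Lemma \ref{omega-tri-subcat-0} and Lemma \ref{splitting- torsion-pair} also give ${^{\bot}\mathcal{S}}={^{\bot}\mathcal{S}^{\bot}}=\mathcal{S}^{\bot}$, and both torsion pairs split. So in fact $\mathcal{U}=\mathcal{W}$, and $j_!=j_*$ is just the inclusion. I would record this, since it simplifies part (2).

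For part (2) I identify $i_*(M)=M$ and $j_*(N)=N$ inside $A$-$\stmod$, because both functors are inclusions. Then I check the two conditions in Definition \ref{definition-sms-representation-finite}.

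\emph{Orthogonality.}
\begin{itemize}
\item Within $\mathcal{M}$ and within $\mathcal{N}$ it holds because $i_*$ and $j_*$ are fully faithful, so Hom spaces computed in $\mathcal{F}(\mathcal{S})$ or $\mathcal{S}^{\bot}$ agree with those in $A$-$\stmod$. In particular each object is still a stable brick.
\item For $M\in\mathcal{M}\subseteq\mathcal{F}(\mathcal{S})$ and $N\in\mathcal{N}\subseteq\mathcal{S}^{\bot}$, we have $\StHom_A(M,N)=0$ because $\mathcal{S}^{\bot}=\mathcal{F}(\mathcal{S})^{\bot}$.
\item Also $\StHom_A(N,M)=0$, because $N\in\mathcal{S}^{\bot}={^{\bot}\mathcal{S}}={^{\bot}\mathcal{F}(\mathcal{S})}$. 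Here I use that ${^{\bot}\mathcal{S}}$ is closed under extensions in the second variable, so vanishing against $\mathcal{S}$ implies vanishing against all of $\mathcal{F}(\mathcal{S})$.
\end{itemize}
The mixed orthogonality in the direction $\mathcal{N}\to\mathcal{M}$ is exactly where the equality ${^{\bot}\mathcal{S}}=\mathcal{S}^{\bot}$ is essential. Without it only one direction would vanish.

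\emph{Generation.} Take $X\in A$-$\stmod$. The recollement gives a triangle $i_!i^!X\to X\to j_*j^*X\to$, which is just the torsion triangle ${\bf c}X\to X\to{\bf d}X\to$ with ${\bf c}X\in\mathcal{F}(\mathcal{S})$ and ${\bf d}X\in\mathcal{S}^{\bot}$. Since $\mathcal{M}$ is a simple-minded system of $\mathcal{F}(\mathcal{S})$, we have ${\bf c}X\in\mathcal{F}(\mathcal{M})$, where the extension closure is taken inside $\mathcal{F}(\mathcal{S})$. Triangles of the triangulated subcategory $\mathcal{F}(\mathcal{S})$ are triangles of $A$-$\stmod$, so ${\bf c}X$ also lies in the extension closure of $\mathcal{S}'$ in $A$-$\stmod$. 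The same argument puts ${\bf d}X\in\mathcal{F}(\mathcal{N})\subseteq\mathcal{F}(\mathcal{S}')$. Hence $X\in\mathcal{F}(\mathcal{S}')\star\mathcal{F}(\mathcal{S}')\subseteq\mathcal{F}(\mathcal{S}')$.

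The main point needing care is the compatibility of triangulated structures. The triangulated structures on $\mathcal{F}(\mathcal{S})$ and $\mathcal{S}^{\bot}$ used to define their simple-minded systems must be the ones inherited as triangulated subcategories of $A$-$\stmod$, not the Coelho Simões–Pauksztello structure. Corollary \ref{omega-tri-subcat-1} and Corollary \ref{triangulated-category-2} guarantee this under the hypothesis $\Omega(\mathcal{S})\subseteq\mathcal{F}(\mathcal{S})$. I expect verifying this identification cleanly to be the main obstacle; everything else is formal.

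Alternatively, one could bypass the recollement formalism entirely, since the splitting in Lemma \ref{splitting- torsion-pair} means every indecomposable lies in $\mathcal{F}(\mathcal{S})$ or in $\mathcal{S}^{\bot}$. Then generation is immediate from Krull–Schmidt, using direct sums, which are split extensions.
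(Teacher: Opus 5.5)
Your proposal is correct and follows essentially the same route as the paper. For (1) you build the two stable t-structures $({}^{\bot}\mathcal{S},\mathcal{F}(\mathcal{S}))$ and $(\mathcal{F}(\mathcal{S}),\mathcal{S}^{\bot})$ from Theorem \ref{two-torsion-pairs}, Proposition \ref{omega-tri-subcat} and Corollary \ref{omega-tri-subcat-1}, then apply Theorem \ref{TTF-triple-recollement}(2); for (2) you get mixed orthogonality from ${}^{\bot}\mathcal{S}=\mathcal{S}^{\bot}$ (Lemma \ref{omega-tri-subcat-0}) and generation from $A$-$\stmod=\mathcal{F}(\mathcal{M})\star\mathcal{F}(\mathcal{N})$. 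Your extra remarks (splitting, $j_!=j_*$) are correct but not needed, and the compatibility of triangulated structures you flag is not a real obstacle: $\mathcal{F}(\mathcal{S})$ and $\mathcal{S}^{\bot}$ enter as triangulated subcategories with the inherited structure, and Corollary \ref{triangulated-category-2}, which concerns sets of simple modules, is not the right reference for that point anyway.
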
	
\begin{proof}
\noindent(1) Since  $\mathcal{F}(\mathcal{S})$ is functorially finite in $A$-$\stmod$, by Theorem \ref{two-torsion-pairs}, there are two torsion pairs $(^{\bot}{\mathcal{S}},\mathcal{F}(\mathcal{S}))$ and  $(\mathcal{F}(\mathcal{S}),\mathcal{S}^{\bot})$ in $A$-$\stmod$. Since $\Omega(\mathcal{S})$ is contained in $\mathcal{F}(\mathcal{S})$ and $\mathcal{S}$ is Nakayama-stable, by Proposition \ref{omega-tri-subcat}, $\mathcal{F}(\mathcal{S})$  is a triangulated subcategory, thus $\mathcal{F}(\mathcal{S})$ is closed under shift functor $[i]$ for any integer $i$.  By  Corollary \ref{omega-tri-subcat-1},  $\mathcal{S}^{\bot}$ is a triangulated subcategory, thus $\mathcal{S}^{\bot}$ is closed under shift functor $[i]$ for any integer $i$. It follows from definition that the pair  $(\mathcal{F}(\mathcal{S}),\mathcal{S}^{\bot})=(\mathcal{F}(\mathcal{S}),\mathcal{S}^{\bot}[1])$ is a t-structure.  Similarly, $(^{\bot}{\mathcal{S}},\mathcal{F}(\mathcal{S})) =(^{\bot}{\mathcal{S}},\mathcal{F}(\mathcal{S})[1])$ is also a t-structure. 
Hence we have two t-structures $(^{\bot}{\mathcal{S}},\mathcal{F}(\mathcal{S}))$ and $ (\mathcal{F}(\mathcal{S}),\mathcal{S}^{\bot})$. It is clear from  Proposition \ref{omega-tri-subcat} that  the three subcategories $^{\bot}{\mathcal{S}}$, $\mathcal{F}(\mathcal{S})$ and $\mathcal{S}^{\bot}$ are triangulated subcategories. Thus  $(^{\bot}{\mathcal{S}},\mathcal{F}(\mathcal{S}))$ and $ (\mathcal{F}(\mathcal{S}),\mathcal{S}^{\bot})$ are stable t-structures. By Theorem  \ref{TTF-triple-recollement}, there is a recollement (\ref{recoll-2}) as the above theorem. Note that the six functors in  (\ref{recoll-2}) is described in Theorem  \ref{TTF-triple-recollement}, and that $i_{*}$ and $j_{*}$ are embedding functors. 

\medskip

\noindent(2) Let $\mathcal{M}=\{M_{i}\mid i=1,\cdots,m\}$ be a simple-minded system of $\mathcal{F}(\mathcal{S})$ and $\mathcal{N}=\{N_{j}\mid i=1,\cdots,n\}$ a simple-minded system of $\mathcal{S}^{\bot}$. Then  $\mathcal{F}(\mathcal{M})=\mathcal{F}(\mathcal{S})$
and $\mathcal{F}(\mathcal{N})=\mathcal{S}^{\bot}$.
By Lemma \ref{omega-tri-subcat-0}, $\mathcal{S}^{\bot}={^{\bot}\mathcal{S}^{\bot}},$ then 
$i_{\ast}(\mathcal{M})$ and $i_{\ast}(\mathcal{N})$ are mutual orthogonal in $A$-$\stmod$. Thus $\mathcal{S}'$ is an orthogonal system in $A$-$\stmod$. Since $ (\mathcal{F}(\mathcal{S}),\mathcal{S}^{\bot})=(\mathcal{F}(\mathcal{M}),\mathcal{F}(\mathcal{N}))$ is a torsion pair  in $A$-$\stmod$, $A$-$\stmod=\mathcal{F}(\mathcal{M})\star\mathcal{F}(\mathcal{N})\subseteq\mathcal{F}(\mathcal{S}')$. Hence $A$-$\stmod=\mathcal{F}(\mathcal{S}')$ and $\mathcal{S}'$ is a simple-minded system  in $A$-$\stmod$. 
\end{proof}

For a subset  $\mathcal{S}_{1}$ of a simple-minded system $\mathcal{S}$ in  $A$-$\stmod$, by Theorem \ref{subset-of-sms}, there are two torsion pairs $(^{\bot}{\mathcal{S}_{1}},\mathcal{F}(\mathcal{S}_{1}))$ and  $(\mathcal{F}(\mathcal{S}_{1}),\mathcal{S}_{1}^{\bot})$ in $A$-$\stmod$. Thus we have the following corollary. 
\begin{Cor}\label{recollement-sms-1}
Let $A$ be a self-injective algebra and $\mathcal{S}$ a simple-minded  system in $A$-$\stmod$. Let  $\mathcal{S}_{1}$ be a Nakayama-stable subset of $\mathcal{S}$ such that  $\Omega(\mathcal{S}_{1})$ is contained in $\mathcal{F}(\mathcal{S}_{1})$.  Then  $A$-$\stmod$ is a recollement of $\mathcal{F}(\mathcal{S}_{1})$ and $\mathcal{S}_{1}^{\bot}$ as follows.
\begin{align}\label{recoll-2'}
\xymatrix{\mathcal{F}(\mathcal{S}_{1})\ar^-{i_*=i_!}[rr]&&A{\text-}\stmod\ar^-{j^!=j^*}[rr]\ar^-{i^!}@/^1.2pc/[ll]\ar_-{i^*}@/_1.6pc/[ll] &&\mathcal{S}_{1}^{\bot}\ar^-{j_*}@/^1.2pc/[ll]\ar_-{j_!}@/_1.6pc/[ll]}.
\end{align}
\end{Cor}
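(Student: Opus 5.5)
The plan is to reduce Corollary \ref{recollement-sms-1} to Theorem \ref{recollement-sms}(1) applied to the orthogonal system $\mathcal{S}_{1}$. That theorem needs three hypotheses on $\mathcal{S}_{1}$:
\begin{enumerate}[$(a)$]
\item $\mathcal{S}_{1}$ is a Nakayama-stable orthogonal system in $A$-$\stmod$;
\item $\Omega(\mathcal{S}_{1})\subseteq\mathcal{F}(\mathcal{S}_{1})$;
\item $\mathcal{F}(\mathcal{S}_{1})$ is functorially finite in $A$-$\stmod$.
\end{enumerate}
Hypotheses $(a)$ and $(b)$ are essentially given; only $(c)$ needs an argument.

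For $(a)$: $\mathcal{S}$ is a simple-minded system, so by Definition \ref{definition-sms-representation-finite} it is an orthogonal system. Every subset of an orthogonal system is again orthogonal, so $\mathcal{S}_{1}$ is an orthogonal system. It is Nakayama-stable by assumption, and $(b)$ is also assumed directly.

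For $(c)$: since $\mathcal{S}_{1}\subseteq\mathcal{S}$ with $\mathcal{S}$ a simple-minded system in the Hom-finite Krull-Schmidt triangulated category $A$-$\stmod$, Theorem \ref{subset-of-sms} applies. It shows that $(^{\perp}\mathcal{S}_{1},\mathcal{F}(\mathcal{S}_{1}))$ and $(\mathcal{F}(\mathcal{S}_{1}),\mathcal{S}_{1}^{\perp})$ are torsion pairs, and in particular that $\mathcal{F}(\mathcal{S}_{1})$ is functorially finite in $A$-$\stmod$.

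With $(a)$, $(b)$ and $(c)$ in hand, Theorem \ref{recollement-sms}(1) gives the recollement (\ref{recoll-2'}) of $A$-$\stmod$ by $\mathcal{F}(\mathcal{S}_{1})$ and $\mathcal{S}_{1}^{\perp}$. Behind this sit Proposition \ref{omega-tri-subcat} and Corollary \ref{omega-tri-subcat-1}, which make both pieces triangulated, so the two torsion pairs are stable t-structures and Theorem \ref{TTF-triple-recollement} applies. I do not expect a genuine obstacle: the only non-formal input is functorial finiteness, and Theorem \ref{subset-of-sms} supplies it.
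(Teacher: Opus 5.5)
Your proposal is correct and follows the paper's argument. The paper likewise uses Theorem \ref{subset-of-sms} to get the two torsion pairs, hence functorial finiteness of $\mathcal{F}(\mathcal{S}_{1})$, for a subset of a simple-minded system, and then invokes Theorem \ref{recollement-sms}(1); your explicit check that $\mathcal{S}_{1}$ inherits orthogonality from $\mathcal{S}$ is a detail the paper leaves implicit.
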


\begin{Rem}
\begin{enumerate}[$(1)$]
\item Note that $\ind(\mathcal{F}(\mathcal{S}_{1}))${\rm(}$\ind(\mathcal{S}_{1}^{\bot})$ or $\ind(^{\bot}\mathcal{S}_{1})${\rm)} is a union of a family of connected AR-components of the stable AR-quiver $_{s}\Gamma_{A}$. 
\item Under the conditions of Theorem \ref{recollement-sms} or  Corollary \ref{recollement-sms-1}, we have  $^{\perp}\mathcal{S}={^{\perp}\mathcal{S}^{\perp}}=\mathcal{S}^{\perp}$ or  $^{\perp}\mathcal{S}_{1}={^{\perp}\mathcal{S}_{1}^{\perp}}=\mathcal{S}_{1}^{\perp}$. Please see Lemma \ref{omega-tri-subcat-0}.  

\item There is no non-trivial  recollements in Corollary \ref{recollement-sms-1} for any  representation finite self-injective algebra. If $A$ is a representation-finite self-injective algebra, then there is no proper subset $\mathcal{S}_{1}$ of a simple-minded system satisfying the conditions of Corollary \ref{recollement-sms-1}. Please see Proposition \ref{sms-extending-RFS} for more details. 
\item There is no non-trivial t-structure for the stable module category of a symmetric algebra. Indeed, since the stable module category of a symmetric algebra is (-1)-Calabi-Yau, by Zhou and Zhu \cite[Proposition 4.6]{ZZ}, there is no non-trivial t-structure for the stable module category of a symmetric algebra. Our claim follows from the bijection of t-structures and recollements by Nicol\'{a}s and  Saor\'{\i}n \cite{NS}.
\item It would be interesting to provide an example of a non-trivial recollement satisfying the conditions in Theorem \ref{recollement-sms}.
\end{enumerate}
\end{Rem}

Let $A$ be a self-injective algebra.  Assume that $A$-$\stmod$  is a recollement of triangulated subcategories $\mathcal{X}$ and $\mathcal{Y}$
\begin{align}\label{recoll-3}
\xymatrix{\mathcal{X}\ar^-{i_*=i_!}[rr]&&A{\text-}\stmod\ar^-{j^!=j^*}[rr]
\ar^-{i^!}@/^1.2pc/[ll]\ar_-{i^*}@/_1.6pc/[ll]
&&\mathcal{Y}\ar^-{j_*}@/^1.2pc/[ll]\ar_-{j_!}@/_1.6pc/[ll]}.
\end{align}

By  Theorem \ref{TTF-triple-recollement}, there are two stable  t-structures $(\mathcal{U},\mathcal{V})$, $(\mathcal{V},\mathcal{W})$, where $\mathcal{U}=\I j_{!}$, $\mathcal{V}=\I i_{*}$ and $\mathcal{W}=\I j_{*}$. Note that $\mathcal{X}=\mathcal{V}$ and $\mathcal{Y}=\mathcal{W} \cong\mathcal{U}$. Since $\mathcal{X}$ and $\mathcal{Y}$ are triangulated subcategories, $(\mathcal{U},\mathcal{V}[-1])=(\mathcal{U},\mathcal{V})=(\mathcal{U},\mathcal{X})$ and $(\mathcal{V},\mathcal{W}[-1])=(\mathcal{V},\mathcal{W})=(\mathcal{X},\mathcal{Y})$ are torsion pairs in $A$-$\stmod$.

Let $\mathcal{M}$
(resp. $\mathcal{N}$) be a simple-minded system of $\mathcal{X}$ (resp. $\mathcal{Y}$). Then $\mathcal{F}(\mathcal{M})=\mathcal{X}$ and $\mathcal{F}(\mathcal{N})=\mathcal{Y}$. Thus  $(\mathcal{U},\mathcal{X})=(^{\bot}\mathcal{M},\mathcal{F}(\mathcal{M}))$ and $(\mathcal{X},\mathcal{Y})=(\mathcal{F}(\mathcal{M}),\mathcal{M}^{\bot})$.
If $\mathcal{M}$ is Nakayama-stable, that is, $\nu(\mathcal{M})=\mathcal{M}$, then by Lemma \ref{omega-tri-subcat-0},  $^{\perp}\mathcal{M}={^{\perp}\mathcal{M}^{\perp}}=\mathcal{M}^{\perp}$.
By Theorem \ref{recollement-sms} (2), we have the following corollary. 

\begin{Cor}\label{recollement-sms-2}
Let $A$ be a self-injective algebra and $A$-$\stmod$  a recollement of triangulated subcategories $\mathcal{X}$ and $\mathcal{Y}$ {\rm(}see diagram \ref{recoll-3}{\rm)}. Let $\mathcal{M}$
{\rm(}resp. $\mathcal{N}${\rm)} be a simple-minded system of $\mathcal{X}$ {\rm(}resp. $\mathcal{Y}${\rm)}.  If $\mathcal{M}$ is Nakayama-stable, then $\mathcal{S}'=i_{*}(\mathcal{M})\cup j_{*}(\mathcal{N})$  is a simple-minded system in $A$-$\stmod$.
\end{Cor}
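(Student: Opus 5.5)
The plan is to reduce Corollary \ref{recollement-sms-2} to the argument of Theorem \ref{recollement-sms}~(2). That argument only needs two inputs: the glued family is orthogonal, and $A$-$\stmod=\mathcal{F}(\mathcal{M})\star\mathcal{F}(\mathcal{N})$. So I first identify the pieces of the recollement with perpendicular categories of $\mathcal{M}$.

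By Theorem \ref{TTF-triple-recollement}~(1), the recollement (\ref{recoll-3}) gives stable t-structures $(\mathcal{U},\mathcal{V})$ and $(\mathcal{V},\mathcal{W})$, with $\mathcal{V}=\I i_*\simeq\mathcal{X}$, $\mathcal{W}=\I j_*\simeq\mathcal{Y}$ and $\mathcal{U}=\I j_!$. Since these subcategories are triangulated, both pairs are torsion pairs in $A$-$\stmod$. Identifying $\mathcal{X}$ with $\mathcal{V}$ and $\mathcal{Y}$ with $\mathcal{W}$ through the fully faithful triangle functors $i_*$ and $j_*$, the simple-minded systems satisfy $\mathcal{F}(i_*\mathcal{M})=\mathcal{V}$ and $\mathcal{F}(j_*\mathcal{N})=\mathcal{W}$. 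Here I use that a fully faithful triangle functor whose image is a triangulated subcategory carries extension closures to extension closures.

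For a torsion pair $(\mathcal{V},\mathcal{W})$ one has $\mathcal{W}=\mathcal{V}^{\perp}$ and $\mathcal{U}={}^{\perp}\mathcal{V}$. This is the standard characterization: if $Y\in\mathcal{V}^\perp$, then in its decomposition triangle $V\to Y\to W\to V[1]$ the first map vanishes, so $Y$ is a summand of $W$, and $\mathcal{W}$ is closed under summands. Also $\mathcal{V}^\perp=(i_*\mathcal{M})^\perp$, because $\mathcal{V}=\mathcal{F}(i_*\mathcal{M})$ and right perpendiculars are unchanged by passing to extension closures. Hence $\mathcal{W}=(i_*\mathcal{M})^\perp$ and $\mathcal{U}={}^\perp(i_*\mathcal{M})$.

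I then use the hypothesis that $\mathcal{M}$ is Nakayama-stable, understood as $i_*\mathcal{M}$ being Nakayama-stable in $A$-$\stmod$. The category $\mathcal{F}(i_*\mathcal{M})=\mathcal{V}$ is triangulated, so $\Omega(i_*\mathcal{M})\subseteq\mathcal{F}(i_*\mathcal{M})$. It is also functorially finite, being the middle term of two torsion pairs, and the approximations come from the decomposition triangles. So Lemma \ref{omega-tri-subcat-0} applies and gives ${}^\perp(i_*\mathcal{M})={}^\perp(i_*\mathcal{M})^\perp=(i_*\mathcal{M})^\perp$. Consequently $\mathcal{W}\subseteq{}^\perp\mathcal{V}$, so morphisms vanish in both directions between $\mathcal{V}$ and $\mathcal{W}$. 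In particular $\StHom(i_*M,j_*N)=0=\StHom(j_*N,i_*M)$ for all $M\in\mathcal{M}$ and $N\in\mathcal{N}$.

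Orthogonality of $\mathcal{S}'=i_*\mathcal{M}\cup j_*\mathcal{N}$ then follows. Within each part, full faithfulness preserves the brick and orthogonality conditions, and across the two parts the Hom spaces vanish by the previous step. For generation, the torsion pair $(\mathcal{V},\mathcal{W})$ gives $A$-$\stmod=\mathcal{V}\star\mathcal{W}=\mathcal{F}(i_*\mathcal{M})\star\mathcal{F}(j_*\mathcal{N})\subseteq\mathcal{F}(\mathcal{S}')$, exactly as in Theorem \ref{recollement-sms}~(2). Thus $\mathcal{S}'$ is a simple-minded system.

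The main obstacle is the step that turns Nakayama-stability into the two-sided vanishing: checking the hypotheses of Lemma \ref{omega-tri-subcat-0}, and making precise that Nakayama-stability of $\mathcal{M}$ means stability of its image $i_*\mathcal{M}$ in $A$-$\stmod$.
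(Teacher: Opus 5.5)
Your proposal is correct and follows the paper's argument. The recollement's stable t-structures give the torsion pairs $({}^{\perp}\mathcal{M},\mathcal{F}(\mathcal{M}))$ and $(\mathcal{F}(\mathcal{M}),\mathcal{M}^{\perp})$; Nakayama-stability with Lemma \ref{omega-tri-subcat-0} yields ${}^{\perp}\mathcal{M}={}^{\perp}\mathcal{M}^{\perp}=\mathcal{M}^{\perp}$; and the orthogonality/generation argument of Theorem \ref{recollement-sms}~(2) finishes, with you additionally checking the lemma's hypotheses ($\Omega$-closure and functorial finiteness of $\mathcal{V}$), which the paper leaves implicit.
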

We will not give  proof of the above corollary and we  remind readers  that the condition $\nu(\mathcal{M})=\mathcal{M}$ implies that each object $M_{i}$ of $\mathcal{M}^{\bot}$ is in $^{\bot}\mathcal{M}^{\bot}$ by Lemma \ref{subset-of-sms-bipen}. 


\subsection{Extendible property of simple-minded systems}
The authors proved in \cite[Section 3.2]{GLYZ} that, for a  self-injective algebra $A$ of finite type, every Nakayama-stable orthogonal system extends to a simple-minded system in $A$-$\stmod$. By adding one more condition, we obtain the following conclusion.

\begin{Prop}\label{sms-extending-RFS}
Let $A$ be a representation-finite self-injective algebra and $\mathcal{M}$ a Nakayama-stable orthogonal system in $A$-$\stmod$ such that $\Omega(\mathcal{M})\subseteq\mathcal{F}(\mathcal{M})$. Then $\mathcal{M}$ is a simple-minded system in $A$-$\stmod$. In particular, there is no non-trivial recollements in  Theorem \ref{recollement-sms} for any representation-finite self-injective algebra.
\end{Prop}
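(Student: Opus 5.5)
The plan is to combine the extension result of \cite[Section 3.2]{GLYZ} with the splitting of the torsion pairs in Lemma \ref{splitting- torsion-pair}, and then use connectedness of the stable Auslander--Reiten quiver of a representation-finite self-injective algebra. Since $A$ is of finite type, $A$-$\stmod$ has only finitely many indecomposables. Hence every subcategory is functorially finite, and in particular $\mathcal{F}(\mathcal{M})$ is. By \cite[Section 3.2]{GLYZ}, $\mathcal{M}$ extends to a simple-minded system $\mathcal{S}\supseteq\mathcal{M}$. Alternatively, I would only use functorial finiteness, which is automatic here.

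By Lemma \ref{splitting- torsion-pair}, the torsion pair $(\mathcal{F}(\mathcal{M}),\mathcal{M}^{\bot})$ is splitting, with $\mathcal{M}^{\bot}={^{\bot}\mathcal{M}}={^{\bot}\mathcal{M}^{\bot}}$. So every indecomposable lies either in $\mathcal{F}(\mathcal{M})$ or in ${^{\bot}\mathcal{M}^{\bot}}$, and there are no nonzero morphisms in either direction between the two classes. Both classes are closed under $\Omega^{\pm1}$ (Proposition \ref{omega-tri-subcat}, Corollary \ref{omega-tri-subcat-1}). Consequently, the set of indecomposables splits into two families with no irreducible maps between them. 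Indeed, an irreducible map is a nonzero morphism, and the Auslander--Reiten translation $\tau=\Omega^2\nu$ preserves both classes, using Nakayama-stability of $\mathcal{M}$ and the fact that $\nu$ then preserves $\mathcal{F}(\mathcal{M})$ and its perpendicular. Therefore $\ind\mathcal{F}(\mathcal{M})$ is a union of connected components of the stable AR-quiver ${}_s\Gamma_A$.

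Now reduce to $A$ indecomposable (non-semisimple), treating blocks separately; I would take $\mathcal{M}$ meeting the block, otherwise passing to the relevant blocks. For an indecomposable representation-finite self-injective algebra, ${}_s\Gamma_A$ is connected (Riedtmann's structure theorem: it is $\mathbb{Z}\Delta/G$). Since $\mathcal{M}\neq\emptyset$, we get $\ind\mathcal{F}(\mathcal{M})=\ind(A\text{-}\stmod)$. So $\mathcal{F}(\mathcal{M})=A$-$\stmod$, and $\mathcal{M}$, being orthogonal, is a simple-minded system. The statement about recollements then follows: under the hypotheses of Theorem \ref{recollement-sms}, $\mathcal{F}(\mathcal{S})$ is everything on each block it meets, so $\mathcal{S}^{\bot}$ is zero there and the recollement is trivial.

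The main obstacle is the careful bookkeeping in the middle step. One must check that irreducible maps cannot cross between the two classes, which uses that both Hom directions vanish, and that each component is entirely contained in one class. One must also handle blocks of $A$ where $\mathcal{M}$ is empty; there the statement must be read blockwise, or $\mathcal{M}$ assumed to meet every block. I expect the connectedness-per-block argument to be where the precise hypothesis matters.
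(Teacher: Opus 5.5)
Your proof is correct, but it follows a different route from the paper's.

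\textbf{The paper's route.} The paper works from inside $\mathcal{F}(\mathcal{M})$. Since $\mathcal{F}(\mathcal{M})$ is triangulated and $\nu$-stable, it contains the whole $\tau$-orbit of each $S\in\mathcal{M}$. A nonzero map $S\to\nu\Omega(S)$, rotated, gives the almost split triangle $\nu\Omega(S)\to Z\to S[1]\to\nu\Omega(S)[1]$. Its middle term lies in $\mathcal{F}(\mathcal{M})$ and supplies a second $\tau$-orbit. The paper then only asserts (``it is not hard to know'') that extension-closure and connectedness of ${}_s\Gamma_A\cong\mathbb{Z}\Delta/\langle\sigma\tau^{\ell}\rangle$ force $\mathcal{F}(\mathcal{M})=A$-$\stmod$.

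\textbf{Your route.} You work with the complement instead. Lemma \ref{splitting- torsion-pair} applies because functorial finiteness is automatic in finite type. It puts every indecomposable either in $\mathcal{F}(\mathcal{M})$ or in $\mathcal{M}^{\bot}={}^{\bot}\mathcal{M}$, with Hom vanishing in both directions. Irreducible maps between non-projective indecomposables are stably nonzero. Also $\tau=\Omega^{2}\nu$ preserves both classes, which matters for components linked only through $\tau$, e.g.\ radical-square-zero Nakayama blocks. Hence $\ind\mathcal{F}(\mathcal{M})$ is a union of components of ${}_s\Gamma_A$; this is exactly the paper's remark following Corollary \ref{recollement-sms-1}. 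Connectedness of ${}_s\Gamma_A$ then finishes the proof.

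\textbf{What each buys.} Your argument turns the paper's unproved final step into an actual proof. It uses only connectedness of ${}_s\Gamma_A$, not Riedtmann's explicit shape. The cost is dependence on the torsion-pair results (Theorem \ref{two-torsion-pairs}, Lemma \ref{omega-tri-subcat-0}).

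The paper's idea, once completed, avoids those results. For $X\in\ind\mathcal{F}(\mathcal{M})$, the almost split triangle $\tau X\to E\to X\to\tau X[1]$ has both outer terms in $\mathcal{F}(\mathcal{M})$. So every AR-neighbour of $X$ lies in $\mathcal{F}(\mathcal{M})$, provided $\mathcal{F}(\mathcal{M})$ is closed under direct summands. The paper uses that closure tacitly when it passes from $Z$ to its summand $Z_1$. No functorial finiteness is needed on this route.

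\textbf{Two small points.} Your appeal to \cite{GLYZ} in the first paragraph is never used and can be dropped. Your caveat about blocks is well taken: the paper tacitly assumes $A$ is indecomposable when it calls ${}_s\Gamma_A$ connected, and otherwise the statement must be read blockwise.
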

\begin{proof}
Let $A$ be a self-injective algebra of finite type. According to the classification of representation-finite self-injective algebras by Riedtmann (see for example \cite{Rie1, Rie3}), it is known that the stable AR-quiver $_{s}\Gamma_{A}$ of $A$ is of the form $\mathbb{Z}\Delta_{n}/<\sigma\tau^{\ell}>$, where $\Delta_{n}$ is a Dynkin quiver, $\mathbb{Z}\Delta$ is the stable translation quiver associated to $\Delta$, $\tau$ is the translation of $\mathbb{Z}\Delta$ and $\sigma$ is some automorphism of the quiver $\mathbb{Z}\Delta$ with a fixed vertex. 

Assume that $\mathcal{M}$ satisfies the given conditions.  Since $\mathcal{F}(\mathcal{M})$ is triangulated and $\mathcal{M}$ is Nakayama-stable, the $\tau$-orbit $\mathcal{W}=\{\tau^{i}(S)\mid i\in\mathbb{Z}\}$ of any $S\in\mathcal{M}$ is contained in $\mathcal{F}(\mathcal{M})$.  
Since  $\StHom_{A}(S,\nu\Omega(S))\cong\D\StHom_{A}(\nu\Omega(S),\nu\Omega(S))\cong\D\StHom_{A}(S,S)$, we have  $\dim_{k}{\StHom_{A}(S,\nu\Omega(S))}=\dim_{k}{\StHom_{A}(S,S)}=1$. Extend non-zero morphism $f\colon S\rightarrow\nu\Omega(S)$ in $A$-$\stmod$ to a triangle as follows.
\begin{align}\label{seq-0}\xymatrix{S\ar[r]^{f} &\nu\Omega(S)\ar[r]^{} & Z\ar[r]^{}& S[1].}
\end{align}
Rotating triangle (\ref{seq-0}) to the left once, we have the following  triangle \begin{align}\label{seq-2}\xymatrix{\nu\Omega(S)\ar[r]^{} & Z\ar[r]^{}& S[1]\ar[r]^{} &\nu\Omega(S)[1].}\end{align}
By \cite[Ch. V, Cor. 2.4]{ARS}, (\ref{seq-2}) is induced by the almost split sequence ended with $S[1]$. Thus any indecomposable, non-projective direct summands of $Z$ is not in the 
$\tau$-orbit $\mathcal{W}$ of $S$. Let $Z_{1}$ be an indecomposable, non-projective direct summand of $Z$ and take $\mathcal{N}=\{\tau^{i}(Z_{1})\mid i\in\mathbb{Z}\}$.  Hence  $\mathcal{F}(\mathcal{M})$  contains at least two $\tau$-orbits in $_{s}\Gamma_{A}$. Since  $\mathcal{F}(\mathcal{M})$ is closed under extension and $_{s}\Gamma_{A}$ is a connected component, it is not hard to know that $\mathcal{F}(\mathcal{M}\cup\mathcal{N})$ contains all  objects in  $_{s}\Gamma_{A}$. Thus  $\mathcal{F}(\mathcal{M})\supseteq\mathcal{F}(\mathcal{W}\cup\mathcal{N})=A$-$\stmod$. Hence $\mathcal{F}(\mathcal{M})=A$-$\stmod$ and $\mathcal{M}$ is a simple-minded system in $A$-$\stmod$.
\end{proof}

We presented  an example in \cite[Example 4.6]{Z0} that there is  a  Nakayama-stable orthogonal system which does not extend to a simple-minded system for a representation-infinite self-injective algebra. We shall study extendible property of simple-minded systems for arbitrary self-injective algebras. The following theorem is proved firstly by Coelho Sim\~{o}es and  Pauksztello \cite[Theorem  6.6]{CP}). We provide a new proof here. 
\begin{Them}\label{sms-extending}
Let $A$ be a self-injective algebra and $\mathcal{M}$ a  Nakayama-stable orthogonal system in $A$-$\stmod$ such that $\Omega(\mathcal{M})\subseteq\mathcal{F}(\mathcal{M})$ and $\mathcal{F}(\mathcal{M})$ is functorially finite in $A$-$\stmod$. Then $\mathcal{M}$ extends to  a simple-minded system in $A$-$\stmod$ if and only if there is a simple-minded system in $^{\bot}\mathcal{M}^{\bot}$.
\end{Them}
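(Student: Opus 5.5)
The plan is to read the statement off the recollement of Theorem \ref{recollement-sms} and the equalities of Lemma \ref{omega-tri-subcat-0}. The hypotheses on $\mathcal{M}$ are exactly those of Theorem \ref{recollement-sms}. So $\mathcal{F}(\mathcal{M})$ and $\mathcal{M}^{\bot}$ are triangulated subcategories (Proposition \ref{omega-tri-subcat}, Corollary \ref{omega-tri-subcat-1}). Moreover $(\mathcal{F}(\mathcal{M}),\mathcal{M}^{\bot})$ is a torsion pair, and Lemma \ref{omega-tri-subcat-0} gives ${}^{\bot}\mathcal{M}={}^{\bot}\mathcal{M}^{\bot}=\mathcal{M}^{\bot}$. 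Since $\mathcal{M}$ is an orthogonal system, it is itself a simple-minded system of the triangulated category $\mathcal{F}(\mathcal{M})$.

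For the direction ``$\Leftarrow$'', let $\mathcal{N}$ be a simple-minded system in ${}^{\bot}\mathcal{M}^{\bot}=\mathcal{M}^{\bot}$. Since ${}^{\bot}\mathcal{M}^{\bot}$ is a triangulated subcategory, its triangulated structure is the one inherited from $A$-$\stmod$. Applying Theorem \ref{recollement-sms}(2) with the simple-minded system $\mathcal{M}$ of $\mathcal{F}(\mathcal{M})$ and $\mathcal{N}$ of $\mathcal{M}^{\bot}$, the set $\mathcal{M}\cup\mathcal{N}$ is a simple-minded system of $A$-$\stmod$ containing $\mathcal{M}$. Hence $\mathcal{M}$ extends.

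For the direction ``$\Rightarrow$'', let $\mathcal{S}\supseteq\mathcal{M}$ be a simple-minded system of $A$-$\stmod$ and put $\mathcal{N}=\mathcal{S}\setminus\mathcal{M}$. By orthogonality each $N\in\mathcal{N}$ lies in ${}^{\bot}\mathcal{M}^{\bot}$, and $\mathcal{N}$ is an orthogonal system there. It remains to show $\mathcal{F}(\mathcal{N})={}^{\bot}\mathcal{M}^{\bot}$, where $\mathcal{F}(\mathcal{N})$ is computed inside ${}^{\bot}\mathcal{M}^{\bot}$; this coincides with the extension closure in $A$-$\stmod$ because the subcategory is triangulated and extension-closed. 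One could simply invoke Theorem \ref{sms-1-1-corresponding-1} with $w=1$, noting that the new triangulated structure on $\mathcal{D}$ coincides with the induced one, as in Corollary \ref{triangulated-category-2}. I would instead give a direct argument.

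Take an indecomposable $X\in{}^{\bot}\mathcal{M}^{\bot}$. Since $\mathcal{F}(\mathcal{S})=A$-$\stmod$, $X$ has a filtration by triangles with factors in $\mathcal{S}$. Using associativity of $\star$, I would write $\mathcal{F}(\mathcal{S})\subseteq\bigcup(\mathcal{F}(\mathcal{M})\star\mathcal{F}(\mathcal{N}))_{n}$ and then reorder factors. The reordering uses that $\mathcal{F}(\mathcal{M})$ and $\mathcal{F}(\mathcal{N})\subseteq\mathcal{M}^{\bot}={}^{\bot}\mathcal{M}$ are mutually orthogonal in both directions and in all shifts, since both are triangulated. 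Concretely, $\mathcal{F}(\mathcal{N})\star\mathcal{F}(\mathcal{M})\subseteq\mathcal{F}(\mathcal{M})\star\mathcal{F}(\mathcal{N})$: a triangle $N\to Y\to M\to N[1]$ has connecting map in $\StHom(M,N[1])=0$, so it splits. Hence $X\in\mathcal{F}(\mathcal{M})\star\mathcal{F}(\mathcal{N})$, that is, there is a triangle $M\to X\to N'\to M[1]$ with $M\in\mathcal{F}(\mathcal{M})$ and $N'\in\mathcal{F}(\mathcal{N})$. As $X\in\mathcal{M}^{\bot}$, the first map vanishes, so $N'\cong X\oplus M[1]$, and $X$ is a summand of $N'$.

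The main obstacle is closure of $\mathcal{F}(\mathcal{N})$ under direct summands. Here I would avoid that issue by splitting more cleverly: the triangle $M\to X\to N'\to M[1]$ also has $N'\to M[1]$ zero, because $N'\in{}^{\bot}\mathcal{M}$ and $\mathcal{F}(\mathcal{M})$ is shift-closed. Thus $X\cong M\oplus N'$ with $M\in\mathcal{F}(\mathcal{M})\cap\mathcal{M}^{\bot}=0$, so $X\cong N'\in\mathcal{F}(\mathcal{N})$. This shows $\mathcal{N}$ is a simple-minded system of ${}^{\bot}\mathcal{M}^{\bot}$.
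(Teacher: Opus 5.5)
Your proof is correct. The direction ``$\Leftarrow$'' coincides with the paper's: apply Theorem \ref{recollement-sms}(2) to $\mathcal{M}$ and to a simple-minded system of ${}^{\bot}\mathcal{M}^{\bot}=\mathcal{M}^{\bot}$. For ``$\Rightarrow$'' you take a genuinely different route.

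The paper inducts on $n$ with $X\in(\mathcal{S})_n$. It peels off a first factor via a triangle $S\to X\to W\to S[1]$ with $W\in(\mathcal{S})_{n-1}$, and asserts $S\in\mathcal{S}\backslash\mathcal{M}$. It then uses the long exact sequences for $\StHom_A(M_i,-)$ and $\StHom_A(-,M_i)$, together with $\mathcal{M}[-1]\subseteq\mathcal{F}(\mathcal{M})$, to show that $W$ stays in ${}^{\bot}\mathcal{M}^{\bot}$, so the induction hypothesis applies. This is local and needs only long exact Hom sequences.

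However, the assertion $S\notin\mathcal{M}$ does not follow from $X\in{}^{\bot}\mathcal{M}^{\bot}$. If $S\in\mathcal{M}$, the map $S\to X$ is merely zero, so $W\cong X\oplus S[1]$, which leaves ${}^{\bot}\mathcal{M}^{\bot}$. Since $(\mathcal{S})_{n-1}$ is not known to be closed under summands, the induction stalls at that point.

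Your global argument handles exactly this case. You move all $\mathcal{M}$-factors to the left using $\mathcal{F}(\mathcal{N})\star\mathcal{F}(\mathcal{M})\subseteq\mathcal{F}(\mathcal{M})\star\mathcal{F}(\mathcal{N})$; these triangles split because $\StHom_A(\mathcal{F}(\mathcal{M}),\mathcal{F}(\mathcal{N})[1])=0$. This gives $A$-$\stmod=\mathcal{F}(\mathcal{M})\star\mathcal{F}(\mathcal{N})$. You then kill both outer morphisms of the resulting triangle $M\to X\to N'\to M[1]$, which sidesteps summand-closure altogether. As a bonus, your route shows that every object of $A$-$\stmod$ is of the form $M\oplus N$ with $M\in\mathcal{F}(\mathcal{M})$ and $N\in\mathcal{F}(\mathcal{N})$, consistent with Lemma \ref{splitting- torsion-pair}.
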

\begin{proof}
 If there is a simple-minded system $\mathcal{M}_{1}$ in $^{\bot}\mathcal{M}^{\bot}$, then $\mathcal{F}(\mathcal{M}_{1})={^{\bot}\mathcal{M}^{\bot}}$. By Theorem \ref{recollement-sms} (2), $\mathcal{M}$ extends to  a simple-minded system $\mathcal{S}$ in $A$-$\stmod$.

Conversely, if $\mathcal{M}$ extends to  a simple-minded system $\mathcal{S}$ in $A$-$\stmod$, then $\mathcal{S}\backslash\mathcal{M}$ is contained in  $^{\bot}\mathcal{M}^{\bot}$. We claim that $\mathcal{F}(\mathcal{S}\backslash\mathcal{M})={^{\bot}\mathcal{M}^{\bot}}$. It is clear that  $\mathcal{F}(\mathcal{S}\backslash\mathcal{M})\subseteq{^{\bot}\mathcal{M}^{\bot}}$. 
We prove ${^{\bot}\mathcal{M}^{\bot}}\subseteq\mathcal{F}(\mathcal{S}\backslash\mathcal{M})$. 
Take an object $X\in{^{\bot}\mathcal{M}^{\bot}}$.  Since $\mathcal{S}$  is a simple-minded system in $A$-$\stmod$, there is a positive integer $n$ such that $X\in(\mathcal{S})_{n}$. We induction on the positive integer $n$. It is easy  to know that the case for $n=1$ is true. We assume the case for $n=k$ holds, that is, if $X\in{^{\bot}\mathcal{M}^{\bot}}$ and $X\in(\mathcal{S})_{k}$, then $X\in\mathcal{F}(\mathcal{S}\backslash\mathcal{M})$. Take $n=k+1$. There is a triangle for $X$ as follows.
\begin{align}\label{tri-1}
\xymatrix{S\ar[r]^{ } &X\ar[r]^{} & W\ar[r]^{}& S[1],}
\end{align}
where $S\in\mathcal{S}$ and $W\in(\mathcal{S})_{k}$. Since $X\in{^{\bot}\mathcal{M}^{\bot}}$, $S\in\mathcal{S}\backslash\mathcal{M}$. 

By applying $\StHom_{A}(M_{i},-)$ for each $M_{i}\in\mathcal{M}$ to the triangle (\ref{tri-1}), there is an exact sequence (denote $\StHom_{A}(-,-)$ simply by $(-, -)$)
\begin{align}\label{tri-2}
\cdots\to (M_{i}, S)\to (M_{i},X)\to(M_{i},W)\to (M_{i},S[1])\to\cdots.
\end{align}
Since $S\in\mathcal{S}\backslash\mathcal{M}\in{^{\bot}\mathcal{M}^{\bot}}$ and $\mathcal{M}[-1]\subseteq\mathcal{F}(\mathcal{M})$, $\StHom_{A}(M_{i},S[1])\cong\StHom_{A}(M_{i}[-1],S)\cong0$. 
Since $\StHom_{A}(M_{i},X)=0=\StHom_{A}(M_{i},S[1])$, by exact sequence (\ref{tri-2}), $\StHom_{A}(M_{i},W)=0$.
By applying $\StHom_{A}(-,M_{i})$ for each $M_{i}\in\mathcal{M}$ to the triangle (\ref{tri-1}), we also have $\StHom_{A}(W,M_{i})=0$. Therefore $W\in{^{\bot}\mathcal{M}^{\bot}}.$ Since $W\in(\mathcal{S})_{k}$, by induction, we have $W\in\mathcal{F}(\mathcal{S}\backslash\mathcal{M})$. Hence  $X\in\mathcal{F}(\mathcal{S}\backslash\mathcal{M})$. Then  ${^{\bot}\mathcal{M}^{\bot}}\subseteq\mathcal{F}(\mathcal{S}\backslash\mathcal{M})$.
Thus $\mathcal{F}(\mathcal{S}\backslash\mathcal{M})={^{\bot}\mathcal{M}^{\bot}}$ and $\mathcal{S}\backslash\mathcal{M}$ is a simple-minded system of ${^{\bot}\mathcal{M}^{\bot}}$.
\end{proof}

\begin{Cor}\label{sms-one-one-corresp}
Let $A$ be a self-injective algebra and $\mathcal{M}$ a Nakayama-stable  orthogonal system such that $\Omega(\mathcal{M})\subseteq\mathcal{F}(\mathcal{M})$ and $\mathcal{F}(\mathcal{M})$ is functorially finite in $A$-$\stmod$. Then there is a one to one correspondence between the set of simple-minded systems containing $\mathcal{M}$  in  $A$-$\stmod$ and the set of  simple-minded systems in $^{\bot}\mathcal{M}^{\bot}$.
\end{Cor}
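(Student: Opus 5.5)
The plan is to define the two maps explicitly and check that each lands in the right set and that they are mutually inverse. For a simple-minded system $\mathcal{S}$ in $A$-$\stmod$ with $\mathcal{M}\subseteq\mathcal{S}$, send $\mathcal{S}$ to $\mathcal{S}\backslash\mathcal{M}$. For a simple-minded system $\mathcal{N}$ of $^{\bot}\mathcal{M}^{\bot}$, send $\mathcal{N}$ to $\mathcal{M}\cup\mathcal{N}$. Both maps are well defined by the two halves of the proof of Theorem \ref{sms-extending}, which I will reuse rather than repeat.

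\textbf{First map.} The converse half of the proof of Theorem \ref{sms-extending} shows that $\mathcal{S}\backslash\mathcal{M}$ lies in $^{\bot}\mathcal{M}^{\bot}$. This holds because $\mathcal{S}$ is orthogonal and $\mathcal{M}\subseteq\mathcal{S}$. The same proof shows that $\mathcal{F}(\mathcal{S}\backslash\mathcal{M})={^{\bot}\mathcal{M}^{\bot}}$. Orthogonality of $\mathcal{S}\backslash\mathcal{M}$ is inherited from $\mathcal{S}$. So $\mathcal{S}\backslash\mathcal{M}$ is a simple-minded system of $^{\bot}\mathcal{M}^{\bot}$.

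\textbf{Second map.} The hypotheses give $\Omega(\mathcal{M})\subseteq\mathcal{F}(\mathcal{M})$ with $\mathcal{M}$ Nakayama-stable. By Proposition \ref{omega-tri-subcat}, $\mathcal{F}(\mathcal{M})$ is triangulated. Also, $\mathcal{M}$ is trivially a simple-minded system of $\mathcal{F}(\mathcal{M})$, being orthogonal with extension closure $\mathcal{F}(\mathcal{M})$. Lemma \ref{omega-tri-subcat-0} gives $\mathcal{M}^{\bot}={^{\bot}\mathcal{M}^{\bot}}$. Now Theorem \ref{recollement-sms}(2) applies to this simple-minded system of $\mathcal{F}(\mathcal{M})$ and to the given $\mathcal{N}$. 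Since $i_{\ast}$ and $j_{\ast}$ are the inclusion functors, it shows that $\mathcal{M}\cup\mathcal{N}$ is a simple-minded system of $A$-$\stmod$, and it contains $\mathcal{M}$.

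\textbf{Mutual inverses.} One composite sends $\mathcal{S}$ to $\mathcal{M}\cup(\mathcal{S}\backslash\mathcal{M})=\mathcal{S}$, which is immediate. The other sends $\mathcal{N}$ to $(\mathcal{M}\cup\mathcal{N})\backslash\mathcal{M}$. This equals $\mathcal{N}$ exactly when $\mathcal{M}\cap\mathcal{N}=\emptyset$, and that is the one point needing an argument, which I expect to be the only (minor) obstacle. If some $M\in\mathcal{M}$ lay in $\mathcal{N}\subseteq{^{\bot}\mathcal{M}^{\bot}}$, we would get $\StHom_A(M,M)=0$. This contradicts $M$ being a stable brick, so the intersection is empty. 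Throughout, objects are taken up to isomorphism, as in the definition of simple-minded systems; this also settles the bookkeeping of set differences.
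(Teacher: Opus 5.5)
Your proposal is correct and follows the paper's route exactly. The corollary is read off from the two halves of the proof of Theorem \ref{sms-extending} (recorded in Remark \ref{bijective}(1)), via the maps $\mathcal{S}\mapsto\mathcal{S}\backslash\mathcal{M}$ and $\mathcal{N}\mapsto\mathcal{M}\cup\mathcal{N}$, with Theorem \ref{recollement-sms}(2) and Lemma \ref{omega-tri-subcat-0} supplying the second map. Your check that $\mathcal{M}\cap\mathcal{N}=\emptyset$ is a small point the paper leaves implicit. A common object would be a stable brick $M$ with $\StHom_A(M,M)=0$, and this disjointness is exactly what gives $(\mathcal{M}\cup\mathcal{N})\backslash\mathcal{M}=\mathcal{N}$.
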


\begin{Cor}\label{sms-extending-2}
Let $A$ be a symmetric algebra and $\mathcal{M}$ an orthogonal system such that $\Omega(\mathcal{M})\subseteq\mathcal{F}(\mathcal{M})$ and $\mathcal{F}(\mathcal{M})$ is functorially finite in $A$-$\stmod$. Then there is a one to one correspondence between the set of simple-minded systems containing $\mathcal{M}$  in $A$-$\stmod$ and the set of  simple-minded systems in $^{\bot}\mathcal{M}^{\bot}$.
\end{Cor}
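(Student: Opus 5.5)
The plan is to deduce Corollary \ref{sms-extending-2} directly from Corollary \ref{sms-one-one-corresp}. For this it suffices to check that every orthogonal system $\mathcal{M}$ in $A$-$\stmod$ is automatically Nakayama-stable when $A$ is symmetric. The other two hypotheses, $\Omega(\mathcal{M})\subseteq\mathcal{F}(\mathcal{M})$ and functorial finiteness of $\mathcal{F}(\mathcal{M})$, are assumed verbatim in both statements.

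The first step is to recall that for a symmetric algebra we have $\D(A)\cong A$ as $A$-$A$-bimodules. Hence the Nakayama functor $\nu=\D(A)\otimes_A-$ is isomorphic to the identity functor on $A$-$\m$, and therefore also on $A$-$\stmod$. Equivalently, by Remark \ref{CY-sms}(2), $A$-$\stmod$ is $(-1)$-Calabi--Yau, so $\mathbb{S}\cong\Sigma^{-1}=\Omega$ and $\mathbb{S}_{-1}=\mathbb{S}\Sigma\cong\Id$. Consequently $\nu(M)\cong M$ for every object $M$, and so $\nu(\mathcal{M})=\mathcal{M}$ up to isomorphism for any family $\mathcal{M}$. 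This is exactly the remark already made in Remark \ref{CY-sms}(2) that every subcategory is an $\mathbb{S}_{-1}$-subcategory.

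The second step is to apply Corollary \ref{sms-one-one-corresp} (equivalently, Theorem \ref{sms-extending} together with Theorem \ref{recollement-sms}(2)), which gives the bijection $\mathcal{S}\mapsto\mathcal{S}\backslash\mathcal{M}$. Its inverse is $\mathcal{N}\mapsto\mathcal{M}\cup\mathcal{N}$, as established in the proof of Theorem \ref{sms-extending}.

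I expect no real obstacle. The only point needing care is that Nakayama-stability is understood up to isomorphism of objects, which is how it is used throughout, for instance in Lemma \ref{omega-self-injective-1}. If one also wants to verify that the two maps are mutually inverse, this follows because $\mathcal{S}\backslash\mathcal{M}$ determines $\mathcal{S}$ once $\mathcal{M}$ is fixed, and because $\mathcal{M}\cup\mathcal{N}$ is orthogonal by Lemma \ref{omega-tri-subcat-0}.
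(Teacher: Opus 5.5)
Your proposal is correct and matches the paper's implicit argument: for symmetric $A$ the Nakayama functor is isomorphic to the identity, so every orthogonal system is Nakayama-stable and Corollary \ref{sms-one-one-corresp} applies verbatim. One small correction concerns your citation: orthogonality of $\mathcal{M}\cup\mathcal{N}$ is immediate from $\mathcal{N}\subseteq{^{\bot}\mathcal{M}^{\bot}}$, whereas Lemma \ref{omega-tri-subcat-0} is actually needed for the generating condition, since it identifies ${^{\bot}\mathcal{M}^{\bot}}$ with $\mathcal{M}^{\bot}$ so that the torsion pair $(\mathcal{F}(\mathcal{M}),\mathcal{M}^{\bot})$ yields $\mathcal{F}(\mathcal{M}\cup\mathcal{N})=A$-$\stmod$.
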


\begin{Rem}\label{bijective}
\begin{enumerate}[$(1)$]
\item On one hand, by Theorem  \ref{sms-extending}, if  $\mathcal{S}$   is a simple-minded system containing $\mathcal{M}$ in $A$-$\stmod$, then  $\mathcal{S}\backslash\mathcal{M}$ is a simple-minded system in $^{\bot}\mathcal{M}^{\bot}$. On the other hand, if $\mathcal{M}'$ is a simple-minded system in $^{\bot}\mathcal{M}^{\bot}$, then 
$\mathcal{M}'\cup\mathcal{M}$ is a simple-minded system containing $\mathcal{M}$ in $A$-$\stmod$.
\item This bijection is similar to Theorem \ref{sms-1-1-corresponding-1}. Note that $^{\bot}\mathcal{M}^{\bot}$ in Corollary \ref{sms-one-one-corresp} is a triangulated subcategory of  $A$-$\stmod$, however  the category $\mathcal{D}$ in Theorem \ref{sms-1-1-corresponding-1} is not a triangulated subcategory of $\mathcal{T}$. 

\item Note that there is a triangulated category which has no simple-minded system, please refer to \cite[Section 6]{Dugas}.
\item Proposition \ref{sms-extending-RFS} tell us that, for a self-injective algebra $A$ of finite type, there is no such  $\mathcal{M}$ in $A$-$\stmod$  satisfying conditions of Corollary \ref{sms-one-one-corresp}. It would be interesting to consider the case of  represenation-infinite self-injective algebras.
\end{enumerate}
\end{Rem}

\begin{Cor}\label{no-recollement-sms}
Let $A$ be a self-injective algebra, and let $\mathcal{S}$ be a Nakayama-stable orthogonal system in $A$-$\stmod$ such that  $\Omega(\mathcal{S})$ is contained in $\mathcal{F}(\mathcal{S})$ and  $\mathcal{F}(\mathcal{S})$ is functorially finite in $A$-$\stmod$.  If there is no non-trivial  recollements as in Theorem \ref{recollement-sms}, then $\mathcal{S}$ is a simple-minded system in $A$-$\stmod$.
\end{Cor}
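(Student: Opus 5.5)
We argue by contraposition, combining Theorem \ref{recollement-sms} with the splitting property in Lemma \ref{splitting- torsion-pair}. The hypotheses on $\mathcal{S}$ are exactly those of Theorem \ref{recollement-sms}, so $A$-$\stmod$ is a recollement of $\mathcal{F}(\mathcal{S})$ and $\mathcal{S}^{\bot}$ as in (\ref{recoll-2}). We interpret ``trivial'' to mean that one of the two outer terms vanishes. Thus, assuming there is no non-trivial recollement of this form, either $\mathcal{F}(\mathcal{S})=0$ or $\mathcal{S}^{\bot}=0$.

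First we exclude $\mathcal{F}(\mathcal{S})=0$. This can only happen if $\mathcal{S}=\emptyset$. In that case the statement holds vacuously when $A$-$\stmod=0$, and otherwise this degenerate case is excluded by convention. So we may assume $\mathcal{S}^{\bot}=0$.

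Next, Lemma \ref{omega-tri-subcat-0} and Lemma \ref{splitting- torsion-pair} give $^{\bot}\mathcal{S}={^{\bot}\mathcal{S}^{\bot}}=\mathcal{S}^{\bot}$. They also show that the torsion pair $(\mathcal{F}(\mathcal{S}),\mathcal{S}^{\bot})$ is splitting: every indecomposable object $X$ of $A$-$\stmod$ lies in $\mathcal{F}(\mathcal{S})$ or in $\mathcal{S}^{\bot}$. Since $\mathcal{S}^{\bot}=0$, every indecomposable object lies in $\mathcal{F}(\mathcal{S})$.

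We can reach the same conclusion directly from the triangle ${\bf c}X\to X\to{\bf d}X\to{\bf c}X[1]$. Here ${\bf d}X\in\mathcal{S}^{\bot}=0$, so $X\cong{\bf c}X\in\mathcal{F}(\mathcal{S})$.

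Since $A$-$\stmod$ is Krull--Schmidt and $\mathcal{F}(\mathcal{S})$ is extension-closed, it is closed under finite direct sums, because split triangles are triangles. Hence $\mathcal{F}(\mathcal{S})=A$-$\stmod$. As $\mathcal{S}$ is an orthogonal system by hypothesis, both conditions of Definition \ref{definition-sms-representation-finite} hold, and $\mathcal{S}$ is a simple-minded system.

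The only delicate point is the meaning of ``trivial''. A recollement in which $\mathcal{F}(\mathcal{S})$ is the whole category is exactly the case $\mathcal{S}^{\bot}=0$, which is what we use. We should state explicitly that the other degenerate possibility, $\mathcal{F}(\mathcal{S})=0$, corresponds to $\mathcal{S}=\emptyset$ and is excluded.
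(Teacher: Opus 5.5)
Your proof is correct and is essentially the intended argument. The paper gives no separate proof; the corollary is Theorem \ref{recollement-sms}(2) (equivalently Theorem \ref{sms-extending}) applied with $\mathcal{S}^{\bot}=0$ and the empty simple-minded system there. That unwinds to exactly your observation: the triangle $\mathbf{c}X\to X\to\mathbf{d}X\to\mathbf{c}X[1]$ with $\mathbf{d}X\in\mathcal{S}^{\bot}=0$ gives $X\in\mathcal{F}(\mathcal{S})$ for every $X$, so the detour through the splitting lemma and Krull--Schmidt is unnecessary.

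Your caveat is also justified. For $\mathcal{S}=\emptyset$ and $A$-$\stmod\neq 0$, all hypotheses hold and the recollement is trivial, yet $\emptyset$ is not a simple-minded system, so the statement tacitly requires $\mathcal{S}\neq\emptyset$.
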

 In Theorem \ref{BGA-sms}, We present  a sufficient and necessary condition for an  orthogonal system to be a simple-minded system over a Brauer graph algebra. We provide another characterization of simple-minded systems for domestic Brauer graph algebras in the following proposition. Please refer to \cite[Section 5.2]{S} for more detail about domestic Brauer graph algebras. 
\begin{Prop}\label{sms-domestic-BGA}
Let $A$ be a domestic Brauer graph algebra and $\mathcal{M}$ an orthogonal system such that $\Omega(\mathcal{M})\subseteq\mathcal{F}(\mathcal{M})$ and $\mathcal{F}(\mathcal{M})$ is functorially  finite  in $A$-$\stmod$. Then $\mathcal{M}$ is a simple-minded system in $A$-$\stmod$. In particular, there is no non-trivial recollements  in  Theorem \ref{recollement-sms} for any domestic Brauer graph algebra.
\end{Prop}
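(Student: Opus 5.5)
Since a domestic Brauer graph algebra $A$ is symmetric, the Nakayama functor acts trivially on $A$-$\stmod$, so $\mathcal{M}$ is automatically Nakayama-stable. Therefore every result of Section 4 that assumes a Nakayama-stable orthogonal system with $\Omega(\mathcal{M})\subseteq\mathcal{F}(\mathcal{M})$ and $\mathcal{F}(\mathcal{M})$ functorially finite applies to $\mathcal{M}$. By Theorem \ref{BGA-sms}, it then suffices to prove one thing: $\mathcal{M}$ contains at least one object from each Euclidean component of the stable AR-quiver $_{s}\Gamma_{A}$. The orthogonality and $\Omega$-closure hypotheses are already given.

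To get this, I would first use Lemma \ref{splitting- torsion-pair}. It says every indecomposable object of $A$-$\stmod$ lies either in $\mathcal{F}(\mathcal{M})$ or in $\mathcal{M}^{\perp}={^{\perp}\mathcal{M}}={^{\perp}\mathcal{M}^{\perp}}$. As noted in the remark after Corollary \ref{recollement-sms-1}, $\ind\mathcal{F}(\mathcal{M})$ is then a union of connected components of $_{s}\Gamma_{A}$. The reason is that both classes are closed under $\Omega^{\pm1}$ and extensions. An irreducible map between indecomposables $X\to Y$ lying on opposite sides of the split would be a nonzero morphism between $\mathcal{F}(\mathcal{M})$ and ${^{\perp}\mathcal{M}^{\perp}}$, which is impossible in either direction. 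Consequently, if $\mathcal{M}\neq\emptyset$ meets a component $\mathcal{C}$, then $\mathcal{C}\subseteq\mathcal{F}(\mathcal{M})$, and any component containing no object of $\mathcal{M}$ lies entirely in ${^{\perp}\mathcal{M}^{\perp}}$.

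The key step is to rule out the case where some Euclidean component $\mathcal{C}$ (of type $\mathbb{Z}\tilde{A}_{p,q}$) lies entirely in ${^{\perp}\mathcal{M}^{\perp}}$, while $\mathcal{M}$ is nonempty, or, when $\mathcal{M}$ is empty, is handled trivially. For this I would use the structure of domestic Brauer graph algebras recalled in \cite[Section 5.2]{S}. The stable AR-quiver consists of finitely many Euclidean components ($\mathbb{Z}\tilde{A}_{p,q}$, at most two or so) together with homogeneous tubes and exceptional tubes $\mathbb{Z}A_\infty/\langle\tau^{r}\rangle$. Moreover, there are nonzero stable morphisms between objects of any two non-homogeneous families; in particular, from a Euclidean component to the tubes and back, and between distinct Euclidean components. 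If such a $\mathcal{C}$ avoided $\mathcal{M}$ while $\mathcal{M}$ met another component $\mathcal{C}'$, then $\mathcal{C}'\subseteq\mathcal{F}(\mathcal{M})$ and $\mathcal{C}\subseteq{^{\perp}\mathcal{M}^{\perp}}$. Hence $\StHom_A(\mathcal{C}',\mathcal{C})=0=\StHom_A(\mathcal{C},\mathcal{C}')$, which would contradict this Hom-nonvanishing. I expect this Hom-nonvanishing between components to be the main obstacle. I would establish it concretely via string combinatorics (graph maps between string modules through a common band or hook), or via Serre duality $\StHom(X,Y)\cong\D\StHom(Y,\Omega X)$, reducing to one direction only.

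Finally, the emptiness of $\mathcal{M}$ gives trivial cases, which I would discard by convention. Then Theorem \ref{BGA-sms} yields that $\mathcal{M}$ is a simple-minded system. The "in particular" claim follows as in Corollary \ref{no-recollement-sms}: under the hypotheses of Theorem \ref{recollement-sms} we now have $\mathcal{F}(\mathcal{S})=A$-$\stmod$. Hence $\mathcal{S}^{\perp}=0$, and the recollement is trivial.
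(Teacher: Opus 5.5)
\textbf{There is a genuine gap in your second paragraph.} The splitting lemma (Lemma \ref{splitting- torsion-pair}) correctly shows that every component of $_{s}\Gamma_{A}$ lies entirely in $\mathcal{F}(\mathcal{M})$ or entirely in ${^{\perp}\mathcal{M}^{\perp}}$. But your consequence, ``any component containing no object of $\mathcal{M}$ lies entirely in ${^{\perp}\mathcal{M}^{\perp}}$'', is false, because $\mathcal{F}(\mathcal{M})$ can contain whole components that $\mathcal{M}$ does not meet. For example, take $\mathcal{M}$ to be the set of simple $A$-modules. All hypotheses hold and $\mathcal{F}(\mathcal{M})=A$-$\stmod$ contains infinitely many homogeneous tubes, yet $\mathcal{M}$ meets only finitely many components.

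Your reduction to Theorem \ref{BGA-sms} rests on this false claim. To show that a Euclidean component $\mathcal{C}$ meets $\mathcal{M}$, you only treat the case $\mathcal{C}\subseteq{^{\perp}\mathcal{M}^{\perp}}$. The case $\mathcal{C}\subseteq\mathcal{F}(\mathcal{M})$ with $\mathcal{C}\cap\mathcal{M}=\emptyset$ is never excluded. Excluding it directly is essentially the proposition itself: it follows from Theorem \ref{BGA-sms} only once you already know $\mathcal{M}$ is a simple-minded system.

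\textbf{How to repair it.} Drop the detour through Theorem \ref{BGA-sms} and prove ${^{\perp}\mathcal{M}^{\perp}}=0$ instead; this gives $\mathcal{F}(\mathcal{M})=A$-$\stmod$ directly. Stable morphisms vanish in both directions between $\mathcal{F}(\mathcal{M})$ and ${^{\perp}\mathcal{M}^{\perp}}$. So it suffices that each Euclidean component has a nonzero stable morphism, in some direction, with every tube, and with the other Euclidean component in the $2$-domestic case. Then all Euclidean components lie on one side, every tube follows them, and since $\mathcal{M}\neq\emptyset$ that side is $\mathcal{F}(\mathcal{M})$. The argument has to go through the Euclidean components, because distinct homogeneous tubes can be Hom-orthogonal. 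This Hom-nonvanishing is still unproved in your proposal, and it is where the domestic hypothesis actually enters.

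\textbf{The paper's route.} The paper uses a different input at this point. It splits on whether $\mathcal{F}(\mathcal{M})$ contains a non-periodic module. Using $\tau=[-2]$ and the triangulated structure, it places an entire $\tau$-orbit of a non-periodic module on one side of the splitting torsion pair. It then appeals to Theorem \ref{BGA-sms} and its proof in \cite[Theorem 4.16]{Z}: a triangulated extension-closed subcategory containing such an orbit is all of $A$-$\stmod$. Applied to $\mathcal{M}^{\perp}$ this gives a contradiction; applied to $\mathcal{F}(\mathcal{M})$ it gives the result.
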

\begin{proof}
Note that  a Brauer graph algebra is a symmetric algebra, its  Nakayama functor is isomorphic to identity functor and stable Auslander-Reiten translation $\tau=[-2]$ in $A$-$\stmod$. Thus  any object in $A$-$\stmod$ is Nakayama-stable. It is clear that $\mathcal{M}$ is Nakayama-stable. 
Since $\mathcal{F}(\mathcal{M})$ is contravariantly finite  in $A$-$\stmod$, by Theorem \ref{two-torsion-pairs},    $(\mathcal{F}(\mathcal{M}),\mathcal{M}^{\bot})$ is a  torsion pair in $A$-$\stmod$. By Corollary \ref{omega-tri-subcat-1}, $\mathcal{M}^{\perp}$ is a triangulated subcategory. 

 If $\mathcal{F}(\mathcal{M})$ contains non-periodic modules, then the $\tau$-orbit $\{\tau^{i}M\mid i\in\mathbb{Z}\}$ of $M$ is contained in  $\mathcal{F}(\mathcal{M})$. Thus $\mathcal{M}$ is a simple-minded system in $A$-$\stmod$ by Theorem \ref{BGA-sms}. If  $\mathcal{F}(\mathcal{M})$ contains no non-periodic modules, then $\mathcal{M}^{\perp}$  contains non-periodic modules by Lemma \ref{splitting- torsion-pair}, since the torsion pair  $(\mathcal{F}(\mathcal{M}),\mathcal{M}^{\bot})$ is splitting. Thus $\mathcal{M}^{\perp}$  contains a $\tau$-orbit of a non-periodic module. By  the proof Theorem \ref{BGA-sms} (cf. \cite[Theorem 4.16]{Z}), $\mathcal{M}^{\perp}$  is the whole stable module category $A$-$\stmod$. It is a contradiction. Thus $\mathcal{M}$ is a simple-minded system in $A$-$\stmod$.
\end{proof}

\end{document}